\definecolor{gris}{gray}{0.45}
\DeclareMathOperator{\im}{\mathrm{Im}}
\DeclareMathOperator{\Spec}{Spec}
\DeclareMathOperator{\Specmax}{Spec_{\fkm}}
\DeclareMathOperator{\Specf}{Spf}
\DeclareMathOperator{\dega}{\widehat{deg}}
\DeclareMathOperator{\rk}{rk}
\DeclareMathOperator{\Sym}{Sym}
\DeclareMathOperator{\pentemax}{\widehat{\mu}_{\max}}
\DeclareMathOperator{\Aut}{Aut}
\DeclareMathOperator{\Card}{Card}
\DeclareMathOperator{\Gal}{Gal}
\def\bA{\mathbf{A}}
\def\bR{\mathbf{R}}
\def\bC{\mathbf{C}}
\def\bZ{\mathbf{Z}}
\def\bN{\mathbf{N}}
\def\bQ{\mathbf{Q}}
\def\bF{\mathbf{F}}
\def\bP{\mathbf{P}}
\def\fkF{\mathfrak{F}}
\def\fkp{\mathfrak{p}}
\def\fkq{\mathfrak{q}}
\def\fko{\mathfrak{o}}
\def\fkm{\mathfrak{m}}
\def\sD{\mathscr D}
\def\sE{\mathscr E}
\def\ol{\overline}
\def\t{\mathsf{t}}
\def\GL{\mathrm{GL}}
\newcommand{\Osheaf}{{\mathscr O}}
\newcommand{\ie}{\textit{i.e. }}
\newcommand{\cf}{\textit{cf.~}}
\newtheorem{theo}{Theorem}[section]
\newtheorem{lemme}[theo]{Lemma}
\newtheorem{prop}[theo]{Proposition}
\newtheorem{definition}[theo]{Definition}
\newtheorem*{theo*}{Theorem}
\theoremstyle{remark}
\newtheorem*{remarque}{Remark}
\numberwithin{equation}{section}
\title{Algebraic points on meromorphic curves}
\author{Mathilde Herblot\footnote{Goethe Universität, Frankfurt am Main - email: mathilde.herblot@gmail.com}}
\begin{document}

\maketitle

\begin{abstract}
The classic Schneider-Lang theorem in transcendence theory asserts that there are only finitely many points at which algebraically independent complex meromorphic functions of finite order of growth can simultaneously take values in a number field, when satisfying a polynomial differential equation with coefficients in this given number field. In this article, we are interested in generalizing this theorem in two directions. First, instead of considering meromorphic functions on~$\bC$ we consider holomorphic maps on an affine curve over the field~$\bC$ or~$\bC_p$. This extends a statement of  D.~Bertrand 
 which applies to meromorphic functions on~$\bP^1(\bC)$ or $\bP^1(\bC_p)$ minus a finite subset of points.  Secondly, we deal with algebraic values taken by the functions, instead of rational values as in the classic setting, inspired by a work of D.~Bertrand. 
 We prove a geometric statement extending those two results, using the \emph{slopes method}, written in the language of Arakelov geometry. In the complex case, we recover a special case of a result by C.~Gasbarri.

\end{abstract}

\section*{Introduction}

Let $f_1,\dots,f_n$ be meromorphic functions on~$\bC$ and assume that at least two of these functions are algebraically independent over~$\bQ$. The Schneider-Lang theorem asserts that the set $W_K$ of points at which the functions $f_1,\dots,f_n$ simultaneously take values in a given number field~$K$ is finite, under two hypotheses. The first condition is that the functions satisfy a polynomial differential equation with coefficients in~$K$; in other words, the ring $K[f_1,\dots,f_n]$ is stable under the derivation $d/dz$. The second one is that the functions $f_1,\dots,f_n$ have a finite order of growth. \label{def ordre de croissance classique}
We recall that an entire function $f$ on~$\bC$ is said to be \emph{of order at most~$\rho$}, for a non-negative real number~$\rho$, if there exist non-negative real numbers $A,B$ such that, for all~$z\in\bC,$
\[|f(z)|\leqslant Ae^{B|z|^{\rho}}.\]
A meromorphic function on~$\bC$ is \emph{of order at most~$\rho$} if it can be written as the quotient of two holomorphic functions of order at most~$\rho.$

We give here a generalization of this statement. Let $K$ be a number field and let~$X$ be a projective variety defined over~$K$. We show that there are only finitely many formal subschemes over~$K$ of dimension~$1$ of the formal completion of~$X$ at a $K$-rational point which satisfy two conditions. In the classic statement, the variety~$X$ would be $\bP^n(\bC)$ and the formal subschemes would be the germs of formal curves defined by $f=(1,f_1,\dots,f_n)$ at the points of $f(W_K)$. The first condition we impose will be called \emph{$\alpha$-arithmeticity} and generalizes the condition of differential equation, and the second condition will be called \emph{uniformization of finite order} of the subschemes and generalizes the hypothesis of germs of curves parameterized by meromorphic function of finite order on~$\bC$.

Moreover, instead of considering formal subschemes based at $K$-rational points for a given number field~$K$, we will consider formal subschemes based at any closed point of~$X$.

\bigskip

To establish our result  about formal subschemes of dimension~1 at the algebraic points we will suppose that they satisfy an ``Arakelovian''  hypothesis: we will say that such a formal subscheme is \emph{$\alpha$-arithmetic} if the height of an \emph{evaluation morphism} along this formal subscheme satisfies some upper bound (see Paragraph~\ref{section morphisme evaluation} for the definition of the evaluation morphisms, and Paragraph~\ref{section alpha arithmetique} for the definition of an $\alpha$-arithmetic formal subscheme). The smaller the non-negative real parameter~$\alpha$ is, the stronger the condition is.

In the case of a polynomial differential equation, that is to say of an algebraic foliation, a germ of leaf at an algebraic point defines a $1$-arithmetic formal subscheme (Lemmas~\ref{equa diff implique 1-analytique} and~\ref{alpha analytique arithmetique}). If moreover almost all $\fkp$-curvatures of the foliation vanish, the formal subscheme is 0-arithmetic. Proposition~\ref{densite pc nulle alpha arithmetique} asserts that if some density $\alpha\in[0,1]$ of $\fkp$-curvatures vanish, then the formal subschemes are $(1-\alpha)$-arithmetic. Theorem~\ref{TH SL DEGRES} is then a link between the classic theorem of Schneider-Lang and an algebraicity criterion for a formal leaf of a foliation whose almost all $\fkp$-curvatures vanish, which is very close to a theorem of J.-B.~Bost in \citep{bost_algebraic-leaves}. It also generalizes an unpublished result of A.~Thuillier.

We also impose a condition of \emph{uniformization of order~$\rho$} of the formal subschemes at one place of the number field, which generalizes the hypothesis of having a curve parameterized by meromorphic functions of order~$\rho$ on the complex affine line. This condition is introduced in Paragraph~\ref{section ordre de croissance} (Definition~\ref{uniformisation}) and consists in the existence of a holomorphic map from an algebraic projective curve over~$\bC$ or~$\bC_p$ minus a finite subset of points $\tau\in T$ to~$X$ parameterizing the formal subschemes.
For such a map we also define a notion of \emph{order of growth near the singularities $\tau\in T$}, generalizing the notion of order of growth at infinity of a meromorphic function on~$\bC$ (Paragraph~\ref{ordre de croissance}, Definition~\ref{def ordre de croissance}).

Let us now state the main result (Theorem~\ref{TH SL DEGRES}) of this article.

\begin{theo*}
Let $X$ be a projective variety defined over~$\bQ$ and let $x_1,\dots,x_m$ be closed points of~$X$. For all $j\in\{1,\dots,m\}$, denote~$K_j=\bQ(x_j)$ the residue field of~$x_j$ and~$d_j$ its degree over~$\bQ$. Let $\alpha_1,\dots,\alpha_m$ be non-negative real numbers. For every $j\in\{1,\dots,m\}$, let~$\widehat V_j$ be a smooth $\alpha_j$-arithmetic $K_j$-subscheme of dimension~$1$ of the formal completion~$\widehat X_{x_j}$ of~$X$ at~$x_j$. Assume that the family of formal subschemes $(\widehat V_1,\dots,\widehat V_m)$ admits a uniformization of order at most~$\rho\geqslant 0$ at some finite or Archimedean place~$p_0$ of~$\bQ$.
Let $r$ be the dimension of the Zariski closure of 
~$\widehat V=\bigcup_{j=1}^m \widehat V_j$ in~$X$.

Then, 
\begin{itemize}
\item either $r>1$ and \[\sum_{j=1}^m\frac{1}{\alpha_jd_j}\leqslant \frac{r}{r-1}\rho,\]
\item or $r=1$, that is to say $\widehat V_1,\dots, \widehat V_m$ are all algebraic. 
\end{itemize}
\end{theo*}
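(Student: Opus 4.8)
The argument is an instance of the slopes method, in the style of J.-B.~Bost's work on algebraic leaves of foliations. The case $r=1$ is essentially a tautology: the Zariski closure $Y$ of $\ol V$ is then a curve, and each smooth one-dimensional formal subscheme $\widehat V_j\subset\widehat Y_{x_j}$ is a branch of $Y$ at $x_j$, hence algebraic; conversely, algebraicity of all the $\widehat V_j$ forces $\dim Y\le 1$. So assume from now on that $r\ge 2$ and, arguing by contradiction, that $\sum_{j=1}^m\frac1{\alpha_j d_j}>\frac r{r-1}\rho$. Replacing $X$ by $Y$ we may suppose $\ol V$ Zariski-dense in $X$ and $\dim X=r$. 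Fix a projective model $\sX$ of $X$ over $\Spec\bZ$ and an arithmetically ample Hermitian line bundle $\ol\sL$ on $\sX$. For $D\gg 0$ put $\ol E_D=\ol{H^0(\sX,\sL^{\otimes D})}$ (integral sections, sup-norm metrics at the Archimedean places); its rank is $h_D=(\sL_X^r/r!)\,D^r+o(D^r)$. For each $j$ and $N\ge 0$ the evaluation morphisms of Paragraph~\ref{section morphisme evaluation} give $\ol E_D\to\bigoplus_{i<N}\ol L_{j,i}$, where $L_{j,i}=\sL^{\otimes D}|_{x_j}\otimes\omega_{\widehat V_j}^{\otimes i}$ is a rank-one Hermitian $\fko_{K_j}$-module with $\dega\ol L_{j,i}=D\,a_j+i\,b_j$, the constants $a_j,b_j$ depending only on $(x_j,\ol\sL,\widehat V_j)$; summing over $j$ yields $\varphi_{D,\ul N}\colon\ol E_D\to\bigoplus_{j=1}^m\bigoplus_{i<N_j}\ol L_{j,i}$ for a tuple $\ul N=(N_1,\dots,N_m)$ to be chosen.

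Three facts drive the estimate. First, a \emph{lower bound}: by the arithmetic Hilbert--Samuel theorem, arithmetic ampleness of $\ol\sL$ gives $\dega\ol E_D\ge c_1 D^{r+1}+o(D^{r+1})$ with $c_1>0$. Second, an \emph{injectivity (non-vanishing) lemma} of Bost type, valid because $\ol V$ is Zariski-dense in $X$: for tuples $\ul N$ compatible with the geometry of the $\widehat V_j$ and with $\sum_j d_j N_j$ slightly exceeding $h_D$, no nonzero section of $\sL^{\otimes D}$ vanishes to order $N_j$ along $\widehat V_j$ for every $j$. Third, the \emph{slope inequality}: filtering $\ol E_D$ by order of vanishing along $\ol V$ and applying Bost's slope inequality graded piece by graded piece (so that the high-order jet evaluations, which are small, are treated separately rather than absorbed into a single operator norm), one obtains, after a careful accounting of the degrees $d_j$,
\[
\dega\ol E_D\ \le\ \sum_{j=1}^{m}\sum_{i=0}^{N_j-1}\bigl(\pente(\ol L_{j,i})+h_{i,j}\bigr)+O\bigl(\textstyle\sum_{j}N_j^{2}\bigr),
\]
where $\pente(\ol L_{j,i})=\frac{1}{d_j}\dega\ol L_{j,i}$ and $h_{i,j}=\sum_v\log\|\,\cdot\,\|_v$ is the total size over all places of the order-$i$ evaluation morphism along $\widehat V_j$.

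The two standing hypotheses enter through $h_{i,j}$. At the distinguished place $p_0$, the uniformization of order $\le\rho$ supplies a holomorphic map from $C\setminus T$ to $X$ parameterizing the $\widehat V_j$; pulling back sections of $\sL^{\otimes D}$ and applying Cauchy-type estimates on disks of radius $R$ near the points of $T$ (and their rigid-analytic analogues when $p_0$ is finite), then optimizing $R$ against the growth bound $\log\|\,\cdot\,\|\lesssim D R^{\rho}$, yields the fixed-order bound $\log\|\,\cdot\,\|_{i,p_0}\le\tfrac{i}{\rho}(\log D-\log i)+O(i)$, so that $\sum_{i<N_j}\log\|\,\cdot\,\|_{i,p_0}\approx-\tfrac{r-1}{2\rho}N_j^{2}\log D$ once $N_j\sim D^{r}$ (a large negative quantity, since $r\ge 2$). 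At every other place, $\alpha_j$-arithmeticity of $\widehat V_j$ (Paragraph~\ref{section alpha arithmetique}) bounds the height of the evaluation morphism along $\widehat V_j$ by $\alpha_j\,i\log i+O(i)$, whence $\sum_{i<N_j}h_{i,j}^{\neq p_0}\approx\tfrac{\alpha_j r}{2}N_j^{2}\log D$. Inserting these bounds together with $\sum_{i<N_j}\pente(\ol L_{j,i})=O(D^{r+1})+O(D^{2r})$ into the slope inequality, the dominant term on the right is $\tfrac{\log D}{2}\sum_{j}d_j N_j^{2}\bigl(\alpha_j r-\tfrac{r-1}{\rho}\bigr)$ while the left side is only $c_1 D^{r+1}=o(D^{2r}\log D)$. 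Letting $D\to\infty$ and choosing $\ul N$ so as to make the right side as small as the injectivity lemma permits, one reaches an inequality on leading coefficients that contradicts the assumption $\sum_j\frac1{\alpha_j d_j}>\frac r{r-1}\rho$; this forces the displayed inequality of the theorem.

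The hard part is precisely this last balancing step, and behind it the exact shape of the injectivity lemma: obtaining the additive bound $\sum_{j}\frac{1}{\alpha_j d_j}$, rather than the weaker $\frac{m}{\sum_j\alpha_j d_j}$ produced by a crude dimension count, requires the sharp non-vanishing estimate for a Zariski-dense \emph{union} of formal curves -- one must control not merely $\rk\bigl(E_D^{N}\bigr)$ globally but the ranks of the graded pieces $G_D^i$ level by level, since an individual $\widehat V_j$ need not be Zariski-dense in $X$ and its contribution to $G_D^i$ disappears once $i$ passes a threshold fixed by the dimension of its own Zariski closure -- and then one must match this, at the level of leading coefficients in $D$, against both the arithmetic Hilbert--Samuel estimate and the place-by-place norm bounds, keeping the denominators $d_j$ in the right places throughout. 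The remaining ingredients -- making the passage from ``uniformization of order $\rho$'' to the operator-norm bounds at $p_0$ uniform in $D$ and $i$, and extracting the constants $a_j,b_j$ and the evaluation-morphism heights from the $\alpha_j$-arithmeticity hypothesis -- are laborious but routine.
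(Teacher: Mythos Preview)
Your overall architecture is right --- slopes method, arithmetic Hilbert--Samuel on the left, evaluation-morphism heights on the right, split into an $\alpha$-arithmetic part and a uniformization (Schwarz-lemma) part at $p_0$ --- but the mechanism that produces the \emph{additive} bound $\sum_j\frac{1}{\alpha_jd_j}$ is missing, and you misidentify where it comes from. It is not an injectivity/non-vanishing lemma about the ranks of the graded pieces (incidentally, by connectedness of the uniformizing curve each individual $\widehat V_j$ \emph{is} Zariski-dense once one is, so your worry there is moot). The point is the \emph{design of the filtration}: one interleaves the derivations along the $\widehat V_j$ with prescribed relative speeds $\beta_j$ (rational, $\sum_j\beta_j=1$), so that at global step $k$ the vanishing order along $\widehat V_j$ is $\omega_j(k)\approx\beta_jk$. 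The Schwarz-lemma bound at $p_0$ then uses the vanishing at \emph{all} the $w_j$ simultaneously, giving a gain of size $\lambda k\log(k/D)$ (with $\lambda<1/\rho$), while the arithmetic cost at step $k$ only involves the single point being advanced and is $\alpha_{a_k}\beta_{a_k}k\log k$. After summing, the slopes inequality forces $\lambda\le\frac{r}{r-1}\max_j(\alpha_jd_j\beta_j)$, and minimizing the right-hand side over $\beta$ (optimal $\beta_j\propto\frac{1}{\alpha_jd_j}$) gives exactly $(\sum_j\frac{1}{\alpha_jd_j})^{-1}$. Your formula for the dominant term, $\sum_j d_jN_j^2(\alpha_jr-\tfrac{r-1}{\rho})$, reflects a Schwarz bound that only sees vanishing along the one $\widehat V_j$ being evaluated; optimizing it under the constraint $\sum_jd_jN_j\gtrsim D^r$ yields at best the per-point inequality $\frac{1}{\alpha_jd_j}\le\frac{r}{r-1}\rho$ for some $j$, not the sum. (The paper's closing remark makes this explicit: equal speeds $\beta_j=1/m$ already give only $m\le\frac{r}{r-1}\rho\max_j\alpha_jd_j$.)

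Two secondary points. First, ``Cauchy-type estimates on disks near $T$'' is not enough on a general algebraic curve $M$: one needs to manufacture a radius function. The paper does this by constructing, via Riemann--Roch, a rational function $R_a$ on $M$ with a pole of order $\lfloor a\beta_j\rfloor$ at each $w_j$ and a zero of order $\ge a\mu_\tau$ at each $\tau\in T$ (with $\sum_\tau\mu_\tau<1$, possible because $\lambda\rho<1$); one then applies the maximum principle on $\{|R_a|\ge r^a\}$ to $f^aR_a^{k-b}$ and optimizes $r$. Second, the placement of the $d_j$'s is delicate: after extending scalars to a Galois field $K\supset K_j$, the uniformization gives the good bound only at the $[K:\bQ]/d_j$ embeddings $\sigma:K\hookrightarrow\bC_{p_0}$ with $\sigma(\xi_{a_k}^l)=\xi_{a_k}$, whereas the $\alpha$-arithmetic bound covers the rest; this is why in the combined height one gets $\alpha_j\beta_jk\log k$ against $\tfrac{\lambda}{d_j}k\log(k/D)$, and why $d_j$ multiplies $\alpha_j\beta_j$ but not the Schwarz term in the final balance.
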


If the variety~$X$ is $\bP^n(\bC)$ and the formal subschemes $\widehat V_j$ are parameterized by meromorphic functions on~$\bC$ of order at most~$\rho$, we recover Theorem~1 in~\citep{bertrand_SLdegres}, which is a generalization of the Schneider-Lang theorem dealing with the set of all points simultaneously mapped to algebraic points by the meromorphic functions. The set of such points is not always finite, but the theorem gives an inequality involving the degrees of the points.
Theorem~\ref{TH SL DEGRES} also generalizes an other result of D.~Bertrand, who proved in the article~\citep{bertrand_SLsurdomainesnonsimplementconnexes} a Schneider-Lang theorem on the projective space~$\bP^1$ minus a finite subset of points, in both the complex and $p$-adic cases. It also generalizes~\citep{diamond_SL} and~\citep{wakabayashi_sl} in which I.~Wakabayashi treats the case where the curve is uniformized by the complement of a finite set of points in a compact complex Riemann surface.

In the complex case, that is when the particular place~$v_0$ is the Archimedean place, Theorem~\ref{TH SL DEGRES} is a particular case of a theorem of C.~Gasbarri (Theorem~4.16 and Corollary~4.17 in~\citep{gasbarri_SL}), which holds for holomorphic functions on a \emph{parabolic curve}, where parabolic is intended in the sense of Ahlfors classification for Riemann surfaces (\citep{ahlfors_riemannsurfaces}), every algebraic curve being parabolic. The order of growth of such a function is then defined by the Nevanlinna theory. It would be an interesting problem to extend Theorem~\ref{TH SL DEGRES} into a non-Archimedean analog of C.~Gabsarri's result, which would require the definition and the study of parabolic analytic $p$-adic curves, for example in the framework of Berkovich spaces.

\bigskip

We now go in the substance of this article.
The proof of Theorem~\ref{TH SL DEGRES} makes use of the \emph{slopes method} invented by J.-B.~Bost, which requires the language of Arakelov geometry. For more details about this method and some elements of Arakelov geometry we refer the reader to~\citep{bost_bourbaki96, chambert-loir_algebricite, bost_algebraic-leaves, chen_these, bost_slopes, viada_these, viada_slopes}. We fix an ample line bundle~$L$ on~$X$ and the \emph{evaluation morphism} $\varphi^k_{D,\widehat V_j}$ maps a section of~$L^{\otimes D}$ which vanishes with order~$n_k$ along~$\widehat V_j$ to the~$n_{k+1}$-th ``Taylor coefficient'' of its restriction to $\widehat V_j$.

The sections of~$L^D$ are filtered by their order of vanishing along the formal subschemes $\widehat V_1,\dots,\widehat V_m$. At one step of the filtration, we do not require the same order of vanishing along the different formal subschemes. The ``derivation speed'' along one formal subscheme will be inversely proportional to the degree of the corresponding algebraic point. As far as we know, such a filtration with different speeds had not been used before, and it could hopefully have other applications to others settings.

A \emph{slopes inequality} reflects the fact that the formal subschemes are Zariski dense in~$X$. It involves the heights of the evaluation morphisms, and the conclusion of the proof of the main theorem follows from this very general slopes inequality combined with upper bounds of the heights coming from the two hypothesis made on the formal subschemes.


Our text is organized as follows. In the first section, we define the notion of \emph{$\alpha$-analytic formal subscheme}. This notion concerns the size of the $p$-adic absolute values of series parameterizing the formal subscheme. It is the notion of {$LG$-germ of type $\alpha$} in~\citep{gasbarri_SL}, and it is stronger than the notion of \emph{$\alpha$-arithmetic formal subscheme}. In some sense, this condition follows the idea of Schneider in his initial statement, in which there was no condition of global differential equation but arithmetic conditions on the Taylor coefficients of the functions at the particular points. In this paragraph we give some details and properties of this notion.

Then, in Section~\ref{alpha arithmetic} we define the \emph{evaluation morphisms} along a formal subscheme and the notion of \emph{$\alpha$-arithmetic formal subscheme} (Definition~\ref{def alpha-arithmetique}), which is a condition on the heights of the evaluation morphisms. The definitions and properties we establish apply to formal subschemes of any dimension, even if we will only need the case of formal subschemes of dimension~1 in view of Theorem~\ref{TH SL DEGRES}, because there is no additional difficulty and we intend to use formal subschemes of higher dimensions in prospective works (including \citep{herblot_SLproduit}, in preparation). We show that the $\alpha$-analyticity implies the $\alpha$-arithmeticity, and give a counter-example to the converse.

Section~\ref{section feuilletage} is devoted to the case of formal subschemes which are the the germs of formal leaves of an algebraic foliation at closed points. In particular, this makes the link between the classical statement and ours. In this section, we will see that such a formal subscheme is usually 1-analytic but not better, whereas it can be $\alpha$-analytic with $\alpha$ smaller than~1 under assumptions on the density of vanishing $\fkp$-curvature of the foliation. In Section~\ref{section points fermes} we develop these notions of $\alpha$-arithmeticity and $\alpha$-analyticity for formal subschemes based at any closed point, non-necessarily rational, and define evaluation morphisms in that case, which had not been done before.

Paragraphs~\ref{section ordre de croissance} and \ref{enonce SL degres} are devoted to the definitions of uniformization and of order of growth of a holomorphic map on an affine curve.

The proof of Theorem~\ref{TH SL DEGRES} consists in showing that the heights of the evaluation morphisms associated to the formal subschemes  satisfy some upper bounds (Proposition~\ref{maj hauteur}). More precisely, the needed upper bound is the combination of two different upper bounds with different origins: one comes from the condition that the formal subschemes are $\alpha$-arithmetic (Lemma~\ref{alpha arithmetique degre}), and the other uses the \emph{uniformization of order~$\rho$} of the family of formal subschemes at one place (Lemma~\ref{bonne maj morphisme d'evaluation}). 
Like in the classic theorem, it is at this point of the proof that the analytic estimates, as a \emph{Schwarz lemma}, appear. In the classic theorem, the estimation comes from the maximum principle applied on a ``big'' disk. This ``big'' disk can also be seen as the complement of a small disk containing the point at infinity. This is the idea we will use here: we take off well-chosen small ``disks'' containing the points~$\tau\in T$.

\bigskip
We introduce now some notation we will use in this text.

 Let $K$ be a number field and~$\fko_k$ its ring of integers. Let~$\Sigma_K$ be the set of places of~$K$. They are of two types: the \emph{finite places} corresponding to the maximal ideals of~$\fko_K$ and the \emph{Archimedean places} corresponding to the $[K:\bQ]$ embeddings of~$K$ in~$\bC$. With each maximal ideal~$\fkp$ of~$\fko_K$ we associate a  $\fkp$-adic absolute value~$| \cdot |_{\fkp}$ on~$K$ normalized in the following way: let~$\upvarpi$ be a uniformizing element, then \[|\upvarpi|_{\fkp}=N(\fkp)^{-1},\] where~$N(\fkp)$ is the norm of the ideal~$\fkp$, that is to say the cardinality of~$\fko_k/\fkp.$ If~$K_{\fkp}$ denotes the completion of~$K$ for this absolute value and~$p$ the prime number such that $(p)=\fkp\cap\bZ,$ we have
	\begin{equation}\label{normalization norme p-adique}|p|_{\fkp}=p^{-[K_{\fkp}:\bQ_p]}.
	\end{equation}
Every embedding $\sigma:K\hookrightarrow\bC$ defines an absolute value on~$K$ by $|x|_{\sigma}:=|\sigma(x)|,$ where $| \cdot |$ is the usual absolute value on~$\bC.$
 
Let $x\in K\setminus\{0\}$. With this normalizations, the \emph{product formula} is:
 \begin{equation}\label{formule du produit}
 \prod_{\fkp\in\Specmax\fko_K}|x|_{\fkp} \prod_{\sigma:K\hookrightarrow\bC}|x|_{\sigma}=1.
 \end{equation}

\section{ $\alpha$-analytic formal subschemes}\label{alpha analytique}

This notion of \emph{$\alpha$-analytic formal subscheme} is due to C.~Gasbarri; it is called \emph{$LG$-germ of type $\alpha$} in his article~\citep{gasbarri_SL}. We give here some details about this condition.

Let~$K$ be a number field. If~$A$ is a commutative unit ring, a n-tuple of formal series $f=(f_1,\dots,f_n)$ in~$n$ variables with coefficients in~$A$ is invertible for the composition law if and only if $f(0,\dots,0)=(0,\dots,0)$ and $Df(0)\in\GL_n(A)$. Hence the group of automorphisms $\Aut(\hat{\bA}^n_{K,0})$ of the formal completion of~$\bA^n_{K}$ at~0 can be identified to the $n$-tuples of formal series in~$n$ variables $f=(f_1,\dots,f_n)$, $f_i\in K[[X_1,\dots,X_n]]$ such that $f(0)=0$ and 
\[D f(0)=\left[\frac{\partial f_i}{\partial x_j}(0)\right]_{1\leqslant i,j\leqslant n}\in\GL_n(K).\]

\begin{definition} 
Let~$G_{\rm{an}}$ denote the subgroup of $\Aut(\hat{\bA}^n_{K,0})$ of the formal automorphisms $f=(f_1,\dots,f_n)\in\Aut(\hat{\bA}^n_{K,0})$ such that, for all $i\in\{1,\dots,n\}$, the series~$f_i$ has a positive radius of convergence at each place of~$K$. \end{definition}

 If $I=(i_1,\dots,i_n)$ is a multi-index, we define the factorial of~$I$ as $I!=\prod_{j=1}^ni_j!$.
 
\begin{definition} 
For any $a\geqslant 0$, let $G_{{\rm an},a}$ be the subset 
of formal automorphisms $f=(f_1,\dots,f_n)\in G_{\rm{an}},$ $f_i=\sum_I f_{i,I}X^I,$ such that there exist a finite subset~$S$ of place of~$K$, containing all Archimedean places, and a family $(C_{\fkp})_{\fkp\notin S}$ of real numbers such that
	\[\forall\fkp\notin S, C_{\fkp}\geqslant 1 \text{ and } \prod_{\fkp\notin S}C_{\fkp}<\infty,\]
and, for all  $\fkp\notin S$,
	\[\|f_{i,I}\|_{\fkp}\leqslant \frac{C_{\fkp}^{|I|}}{\|I!\|_{\fkp}^a}.\]
\end{definition}

\begin{lemme}\label{polygone de Newton}
Let $a\in\bR_+$ and let $f=(f_1,\dots,f_n)\in G_{{\rm an},a}$. Then there exists a finite subset~$S$ of finite places of~$K$ and a family $(C_{\fkp})_{\fkp\notin S}$ of real number at least equal to~1 such that for every $j\in\{1,\dots,m\}$ the radius of convergence of $f_j$ is at least ${C_{\fkp}}^{-1}p^{-\frac{a[K_{\fkp}:\bQ_p]}{p-1}}$ and
\[\text{for all } r\in\mathopen[0,{C_{\fkp}}^{-1}p^{-\frac{a[K_{\fkp}:\bQ_p]}{p-1}}\mathclose[, \sup_{|z|_p\leqslant r}|f_i(z)|_{\fkp}\leqslant C_{\fkp}r,\]
and \[\prod_{\fkp\notin S}C_{\fkp}<\infty.\]
\end{lemme}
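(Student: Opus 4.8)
**Proof plan.**

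The plan is to reduce the statement to a place‑by‑place estimate and to read off both assertions from the $p$‑adic valuation of the multinomials $I!$, which is essentially what the Newton polygon of the $f_i$ encodes. First I would record the data supplied by the hypothesis $f\in G_{\mathrm{an},a}$: a finite set $S$ of places of $K$ containing all the Archimedean ones, together with real numbers $C_\fkp\geq 1$ for $\fkp\notin S$ with $\prod_{\fkp\notin S}C_\fkp<\infty$ and $\|f_{i,I}\|_\fkp\leq C_\fkp^{|I|}/\|I!\|_\fkp^{a}$ for all $i$, all multi‑indices $I$, and all $\fkp\notin S$. Discarding the Archimedean places of $S$ already yields the finite set of finite places demanded in the conclusion.

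Next, fix $\fkp\notin S$ lying above the rational prime $p$ and recall Legendre's formula $v_p(k!)=(k-s_p(k))/(p-1)\leq k/(p-1)$, where $s_p(k)$ is the sum of the base‑$p$ digits of $k$; summing over the coordinates of $I=(i_1,\dots,i_n)$ gives $v_p(I!)\leq |I|/(p-1)$, hence, with the normalization $|p|_\fkp=p^{-[K_\fkp:\bQ_p]}$, the inequality $\|I!\|_\fkp^{-a}\leq p^{\,a[K_\fkp:\bQ_p]\,|I|/(p-1)}$. Plugging this into the hypothesis gives $\|f_{i,I}\|_\fkp\leq \beta_\fkp^{\,|I|}$ for all $i,I$, where $\beta_\fkp:=C_\fkp\,p^{\,a[K_\fkp:\bQ_p]/(p-1)}\geq 1$; note that $\beta_\fkp^{-1}$ is exactly the quantity $C_\fkp^{-1}p^{-a[K_\fkp:\bQ_p]/(p-1)}$ appearing in the statement.

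Then I would invoke the elementary ultrametric estimate for power series. Since $f_i(0)=0$, only multi‑indices with $|I|\geq 1$ contribute, so for $|z|_\fkp\leq r$ one has $|f_i(z)|_\fkp\leq \max_{|I|\geq 1}\|f_{i,I}\|_\fkp\,r^{|I|}\leq \max_{|I|\geq 1}(\beta_\fkp r)^{|I|}$. When $r<\beta_\fkp^{-1}$ the term $(\beta_\fkp r)^{|I|}$ tends to $0$ as $|I|\to\infty$ — whence convergence of each $f_i$ on the corresponding polydisk, i.e.\ the asserted lower bound on the radius — and it is maximal at $|I|=1$, so $\sup_{|z|_\fkp\leq r}|f_i(z)|_\fkp\leq \beta_\fkp\,r$ on the whole range $r\in[0,C_\fkp^{-1}p^{-a[K_\fkp:\bQ_p]/(p-1)}[$.

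Finally I would package these local estimates: the finite set $S$ (Archimedean places removed) and a family of constants built from the $C_\fkp$ do the job, the convergence $\prod_{\fkp\notin S}C_\fkp<\infty$ being inherited from the hypothesis, while the extra factor $p^{\,a[K_\fkp:\bQ_p]/(p-1)}$ produced by the factorials has been absorbed into the radius bound, where it does not affect the product. I expect the only genuine input to be the translation $\|I!\|_\fkp^{-a}\leq p^{\,a[K_\fkp:\bQ_p]\,|I|/(p-1)}$ via Legendre's formula and the chosen normalization of $|\cdot|_\fkp$; everything else is routine ultrametric bookkeeping, the one point requiring care being how to distribute the loss between the lower bound on the radius and the upper bound on the supremum so that a single convergent family of constants controls both.
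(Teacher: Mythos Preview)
Your approach is exactly the one the paper takes: pull the finite set $S$ and the family $(C_\fkp)$ from the definition of $G_{\mathrm{an},a}$, bound $\|I!\|_\fkp^{-a}$ via Legendre's formula, and apply the ultrametric maximum principle term by term. There is, however, one place where your bookkeeping does not quite close.

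From $v_p(I!)\leqslant |I|/(p-1)$ you obtain $\|f_{i,I}\|_\fkp\leqslant \beta_\fkp^{|I|}$ with $\beta_\fkp=C_\fkp\,p^{a[K_\fkp:\bQ_p]/(p-1)}$, and hence $\sup_{|z|_\fkp\leqslant r}|f_i(z)|_\fkp\leqslant \beta_\fkp r$ for $r<\beta_\fkp^{-1}$. But the lemma asserts the bound $C_\fkp r$, with the \emph{same} constants whose product converges. You cannot simply rename $\beta_\fkp$ as your family, because $\sum_\fkp [K_\fkp:\bQ_p]\frac{\log p}{p-1}$ diverges, so $\prod_\fkp \beta_\fkp=\infty$; and your final paragraph's claim that the extra factor is ``absorbed into the radius bound'' is not accurate, since it appears in your supremum estimate as well.

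The fix is to sharpen Legendre's bound by one unit: for $|I|\geqslant 1$ at least one coordinate is positive, so the base-$p$ digit sum satisfies $\sum_j s_p(i_j)\geqslant 1$, whence
\[
v_p(I!)=\frac{|I|-\sum_j s_p(i_j)}{p-1}\leqslant \frac{|I|-1}{p-1}.
\]
This yields $\|f_{i,I}\|_\fkp\,r^{|I|}\leqslant C_\fkp r\,(\beta_\fkp r)^{|I|-1}\leqslant C_\fkp r$ for $r<\beta_\fkp^{-1}$, which is precisely the paper's one-line computation. With this adjustment your plan goes through verbatim.
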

\begin{proof} Let $j\in\{1,\dots,n\}$ and let $S$ and $(C_{\fkp})_{\fkp\notin S}$ be as in the definition. Then $f_j$ has a radius of convergence at least equal to ${C_{\fkp}}^{-1}p^{-\frac{[K_{\fkp}:\bQ_p]a}{p-1}}$, and for all real numbers $r<{C_{\fkp}}^{-1}p^{-\frac{a[K_{\fkp}:\bQ_p]}{p-1}}$ we have
\begin{align*}
\sup_{|z|_p\leqslant r}|f_i(z)|_{\fkp}&\leqslant \max_I |f_{i,I}|_{\fkp}r^{|I|}\leqslant rC_{\fkp}\max_I(C_{\fkp} r p^{\frac {a[K_{\fkp}:\bQ_p]}{p-1}})^{|I|-1}\leqslant C_{\fkp}r.\qedhere
\end{align*}
\end{proof}

Let $X$ be a projective algebraic variety of dimension~$n$ defined over a number field~$K$ and let~$P$ be a smooth $K$-rational point of~$X$. Let $\widehat V$ be a smooth formal subscheme of dimension~$d$ of the formal completion~$\widehat X_P$ of~$X$ at~$P$.

\begin{theo}There is a unique way of associating with such a triple $(X,\widehat V, P)$ a number $\alpha(X,\widehat V, P)$ in $\bR_+\cup\{\infty\}$ such that:
\begin{enumerate}
\item If $(X,\widehat V,P)=(\bA_K^n,\widehat V,(0,\dots,0))$, $\alpha(X,\widehat V, P)$ is the infimum in~$\bR_+\cup\{+\infty\}$ of the $a\in\bR_+$ such that there exists $f\in G_{{\rm an},a}$ such that $f^*{\widehat V}=\bA^d.$ (If the set of such $a$'s is empty, $\alpha(X,\widehat V, P)=\infty$.)
\item If $X\rightarrow X'$ is a closed immersion, then $\alpha(X,\widehat V, P)=\alpha(X',\widehat V, P).$
\item If there exist a triple $(X',\widehat V',P')$ and a morphism $X\rightarrow X'$, étale at $P$, mapping $P$ on $P'$ and inducing an automorphism $\widehat V\simeq\widehat V'$, then $\alpha(X,\widehat V, P)=\alpha(X',\widehat V', P').$

\end{enumerate}
\end{theo}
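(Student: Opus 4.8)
The plan is to construct the invariant $\alpha(X,\widehat V,P)$ directly via property (1) applied in a suitable affine embedding, and then to check that the resulting number is well-defined and satisfies (2) and (3). First I would establish \emph{existence}: given $(X,\widehat V,P)$, choose a locally closed immersion of a Zariski-open neighbourhood $U$ of $P$ into some $\bA^N_K$ sending $P$ to the origin; composing with a further linear automorphism of $\bA^N_K$ one may assume that the tangent space $T_P\widehat V$ is the coordinate subspace $\bA^d_K\subset\bA^N_K$. One then \emph{defines} $\alpha(X,\widehat V,P)$ to be the infimum over $f\in G_{{\rm an},a}$ with $f^*\widehat V=\bA^d$ of the exponents $a$, exactly as in (1) but now with $n$ replaced by $N$. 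The first thing to verify is that this agrees with (1) on the nose when $X=\bA^n_K$ already: this is the content of the compatibility of $G_{{\rm an},a}$ with closed immersions $\bA^n_K\hookrightarrow\bA^N_K$, i.e. a germ of formal automorphism straightening $\widehat V$ inside $\bA^n$ can be extended to one of $\bA^N$ and conversely restricted, without changing the class of the coefficient estimates (the extra $N-n$ coordinates contribute series that are polynomial, hence harmless). So really property (2) for the special case of affine spaces is built in from the start, and more generally (2) for an arbitrary closed immersion $X\hookrightarrow X'$ follows because one can choose a common affine chart: a locally closed immersion of a neighbourhood of $P$ in $X'$ restricts to one on $X$, and $\widehat V$, $P$ are unchanged.

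Next I would prove \emph{uniqueness} and \emph{invariance under étale maps}, property (3), which I expect to be the main obstacle. Suppose $\pi:X\to X'$ is étale at $P$, $\pi(P)=P'$, and $\pi$ induces $\widehat V\xrightarrow{\sim}\widehat V'$. Because $\pi$ is étale at $P$ it induces an isomorphism of formal completions $\widehat\pi:\widehat X_P\xrightarrow{\sim}\widehat X'_{P'}$, carrying $\widehat V$ to $\widehat V'$. The point is that this isomorphism, read in suitable coordinates, is given by a tuple of formal power series with \emph{good reduction at almost all places} — i.e. by an element of $G_{\rm an}$ lying in $\bigcap_{a>0}G_{{\rm an},a}$, in fact with coefficient bounds involving no $\|I!\|_\fkp$ denominators at all. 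Indeed, an étale morphism of finite type $K$-schemes spreads out to an étale morphism over $\Spec\fko_{K}[1/M]$ for some integer $M$, and over such an integral base the formal isomorphism of completions has coefficients that are $\fkp$-integral for $\fkp\nmid M$; so convolving it with an $f\in G_{{\rm an},a}$ straightening $\widehat V$ produces an $f'\in G_{{\rm an},a}$ straightening $\widehat V'$ with the \emph{same} exponent $a$ and a product $\prod C_\fkp$ still finite (the product of two convergent products, after enlarging the finite exceptional set $S$). This gives $\alpha(X',\widehat V',P')\le\alpha(X,\widehat V,P)$; applying the argument to $\widehat\pi^{-1}$ gives the reverse inequality. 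Uniqueness then follows formally: any invariant satisfying (1)–(3) must, after embedding $U\hookrightarrow\bA^N_K$ (use (2)) and straightening the tangent space by a linear — hence étale, globally integral — automorphism (use (3)), be computed by the recipe of (1); so it is forced to equal the number we constructed.

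The remaining routine checks are: that the construction is independent of the chosen affine chart and of the chosen linearization of $T_P\widehat V$, which is precisely property (3) applied to the transition maps (these are étale — indeed open immersions or linear isomorphisms — and defined over $\fko_K[1/M]$ for suitable $M$); that $\alpha$ does not depend on the ambient $X$ once the germ $(\widehat V,P)$ is fixed up to the equivalences in (2)–(3); and that $\alpha(X,\widehat V,P)\in\bR_+\cup\{\infty\}$, with the convention $\alpha=\infty$ when no straightening $f\in\bigcup_a G_{{\rm an},a}$ exists. Lemma~\ref{polygone de Newton} is not logically needed for this theorem but it records the geometric meaning — uniform $\fkp$-adic convergence on a disk of radius comparable to $p^{-a[K_\fkp:\bQ_p]/(p-1)}$ — of the exponent $a$; I would cite it only to motivate why the denominators $\|I!\|_\fkp^{a}$, rather than some other normalization, are the right ones for the invariant to be stable under the operations (2) and (3). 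The one genuinely delicate point, worth spelling out in the actual proof, is the spreading-out argument that an étale morphism's completion isomorphism lies in $\bigcap_{a>0}G_{{\rm an},a}$ with \emph{no} factorial denominators: this is what makes $\alpha$ an honest étale-local invariant rather than merely shifting under étale maps.
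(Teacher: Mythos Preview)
Your proposal is correct in its essential content: the decisive step is the spreading-out argument showing that an étale morphism over $K$ extends to an étale morphism over some $\fko_K[1/M]$, so that the induced isomorphism of formal completions has $\fkp$-integral coefficients for $\fkp\nmid M$ and hence lies in $\bigcap_{a>0}G_{{\rm an},a}$. This is exactly the argument the paper gives.

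The organization differs. The paper does not go through a closed immersion into a large $\bA^N_K$; instead it uses that $P$ is smooth on $X$ to choose directly an étale chart $f:U\to\bA^n_K$ with $n=\dim X$, sets $\alpha(X,\widehat V,P):=\alpha(\bA^n_K,f_*\widehat V,0)$ via property~(1), and then checks independence of $f$ by the spreading-out argument applied to $\hat\gamma\circ\hat\phi^{-1}$ for two such charts $f,g$ with integral lifts $\phi,\gamma$. Properties (2) and (3) then follow formally from this well-definedness. Your route makes property~(2) part of the construction and therefore needs the extra extend/restrict lemma comparing $G_{{\rm an},a}$ in $\bA^n$ versus $\bA^N$; the paper's route sidesteps this entirely by never leaving dimension~$n$. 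Both are valid; the paper's is shorter.

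One imprecision worth fixing: you write that the transition maps between different affine charts are ``étale --- indeed open immersions or linear isomorphisms''. That is true for changes of open neighbourhood and for linear re-alignments of $T_P\widehat V$, but two \emph{closed} immersions $U\hookrightarrow\bA^N$ and $U\hookrightarrow\bA^{N'}$ are not related by an étale map of the ambient affine spaces. What actually bridges them is your earlier extend/restrict argument (i.e.\ property~(2) for affine spaces), not property~(3). In the ``restrict'' direction you should spell out that from $F\in G_{{\rm an},a}(\bA^N)$ with $F(\bA^d)=\widehat V\subset\bA^n$ one recovers $f\in G_{{\rm an},a}(\bA^n)$ by taking the first $n$ components of $F|_{\bA^d}$ and re-extending as in the paper's formula~\eqref{redressement V chapeau}, after a linear normalization so that $Df(0)=I_n$; a naive projection $\pi\circ F\circ\iota$ need not be invertible.
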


\begin{proof} We first show that Conditions 1, 2 and 3 determine $\alpha(X,\widehat V,P)$ for all $(X,\widehat V,P)$. Let $(X,\widehat V, P)$ be a triple, $U$ an open subset of~$X$ containing~$P$, and $f:U\to \bA_K^n$ an étale morphism mapping~$P$ to~$0$. Then, if follows from point 3 that $\alpha(U,\widehat V, P)=\alpha(\bA^n_K,f_*\widehat V,0)$ which is well-defined because of~1. Since the inclusion $(U,P)\hookrightarrow (X,P)$ is étale, we get also $\alpha(X,\widehat V, P)=\alpha(U,\widehat V, P)$ from point~3.

Let $f:U\rightarrow\bA^n_K$ be étale at~$P$. We prove that $\alpha(\bA^n_K,f_*\widehat V,0)$ does not depend on the choice of such an~$f$. There is a model $\mathscr U$  of $U$, of finite type over $\Spec\mathfrak o_K[\frac{1}{N}]$, and an étale morphism $\phi\colon\mathscr U\rightarrow\bA^n_{\mathfrak o_K[\frac{1}{N}]}$ whose restriction to the generic fiber is equal to $f$ and such that the rational point~$P$ extends to a section $\mathscr P$ of the morphism $\mathscr U\rightarrow\Spec\fko_K[\frac{1}{N}]$.
\begin{center}
\begin{tikzpicture}[decription/.style={fill=white, inner sep=2pt}]
\matrix(m)[matrix of math nodes, row sep=3em, column sep=2.5em, text height=1.5ex, text depth=0.25ex]
{\mathscr U&U\\
\Spec\fko_K[\frac{1}{N}]&\Spec K\\};
\path[->,font=\scriptsize]
(m-1-1) 	edge (m-2-1)
		edge (m-1-2)
(m-2-1)	edge (m-2-2)
		edge [bend left=30] node[auto] {$\mathscr P$} (m-1-1)
(m-1-2)	edge (m-2-2);		
\end{tikzpicture}
\end{center}

This étale morphism induces an \emph{isomorphism} of formal subschemes $\hat \phi:\hat{\mathscr U_{\mathscr P}}\rightarrow\hat{\bA}^n_{\mathfrak o_K[\frac{1}{N}],0}$ (see for instance \citep{liu_algebraicgeometry}, 4.3.2. Prop 3.26).

If $f$ and $g$ are two morphisms from~$U$ to $\bA^n_K$ étale at~$P$, we can choose~$\phi$ and~$\gamma$ as above, defined over the same model~$\mathscr U$.
Then, formally inverting~$\phi$,  we have $\hat \gamma\circ\hat \phi^{-1}\in\mathfrak o_K[\frac{1}{N}][[X_1,\dots,X_n]]^n$.
 
 \begin{center}
\begin{tikzpicture}[decription/.style={fill=white, inner sep=2pt}]
\matrix(m)[matrix of math nodes, row sep=3em, column sep=2.5em, text height=1.5ex, text depth=0.25ex]
{U&\bA^n_K\\
\bA^n_K&\\};
\path[->,font=\scriptsize]
(m-1-1) 	edge node[auto]{$f$}(m-1-2)
		edge node[auto] {$g$}(m-2-1);		
\end{tikzpicture}
\hspace{3em}
\begin{tikzpicture}[decription/.style={fill=white, inner sep=2pt}]
\matrix(m)[matrix of math nodes, row sep=3em, column sep=2.5em, text height=1.5ex, text depth=0.25ex]
{\hat{\mathscr U}_P&\hat{\bA}^n_{\mathfrak o_K[\frac{1}{N}],0}\\
\hat{\bA}^n_{\mathfrak o_K[\frac{1}{N}],0}&\\};
\path[->,font=\scriptsize]
(m-1-1) 	edge node[auto]{$\hat \phi$}(m-1-2)
		edge node[auto] {$\hat \gamma$}(m-2-1)
(m-1-2)	edge node[auto]{$\hat \gamma\circ\hat \phi^{-1}$} (m-2-1);		
\end{tikzpicture}
\end{center}
 
For every prime ideal $\fkp$ of~$\mathfrak o_K$ containing no prime factor of~$N$, the $\fkp$-adic norm of the coefficients of~$\hat g\circ \hat f^{-1}$ is at most~1, and hence \[\alpha(\bA^n_K,f_*\widehat V,0)=\alpha(\bA^n_K,g_*\widehat V,0).\qedhere\]
\end{proof}

\begin{definition}
Let $(X,\widehat V,P)$ be a triple as above, and let~$\alpha$ be a non-negative real number.  We will say that the formal subscheme~$\widehat V$ is \emph{$\alpha$-analytic} if $\alpha\geqslant\alpha(X,\widehat V,P).$
\end{definition}

\section{ $\alpha$-arithmetic formal subschemes}\label{alpha arithmetic}

\subsection{Evaluation morphisms} \label{section morphisme evaluation}

Let $X$ be a projective variety over~$K$. Let $P\in X(K)$ and $\widehat V$ be a smooth formal subscheme (of dimension $d$) of the formal completion~$\widehat X_{P}$ of~$X$ at~$P$. For all non-negative integers~$k$, let $(V)_k$ denote the $k$-th infinitesimal neighborhood of~$P$ in~$\widehat V$. Hence we have

\[\{P\}=(V)_0,\] \[(V)_k\subseteq (V)_{k+1},\]
\[\widehat V=\lim\limits_{\substack{\rightarrow\\k}}(V)_k.\]

Let $L$ be an ample line bundle on~$X$. For all non-negative integers $D,k$ we define the following $K$-vector spaces and $K$-linear applications:
\[E_D=\Gamma(X,L^{\otimes D}),\]
\begin{align*}
\eta_D:E_D&\rightarrow \Gamma(\widehat V,L^D)\\
s&\mapsto s_{|\widehat V},
\end{align*}
\begin{align*}
\eta_D^k:E_D&\rightarrow \Gamma((V)_k,L^D)\\
s&\mapsto s_{|(V)_k}.
\end{align*}

The vector spaces \[E^k_D=\ker\eta_D^{k-1}=\{s\in\Gamma(X,L^{\otimes D})\ |s_{|(V)_{k-1}}=0\}\] define a descending filtration of~$E_D$.

The kernel of the restriction map from $\Gamma((V)_k,L^D)$ to $\Gamma((V)_{k-1},L^D)$ is isomorphic to $\Sym^k\left(\Omega^1_{\widehat{V}}\right)\otimes L^D_{P}$ (see~\citep{viada_these}, Paragraph~4.2.5 or~\citep{viada_slopes}, Paragraph~2.2). Thus the map $\eta^k_D$ restricted to $E^k_D$ induces  linear map
	\begin{equation}\label{morphisme evaluation 1 point} 
	\varphi^k_{D,\widehat V}:E^k_D\longrightarrow\Sym^k\left(\Omega^1_{\widehat{V}}\right)\otimes L^D_{P},\end{equation}
which, roughly speaking, maps a section of $L^D$ vanishing at~$P$ with order~$k$ along~$\widehat V$ to the $(k+1)$-th ``Taylor coefficient'' of its restriction to~$\widehat V.$ By definition, the kernel of $\varphi_{D,\widehat V}^k$ is equal to~$E_D^{k+1}.$

\subsection{Integral structures, Hermitian structures, heights}\label{fibres vectoriels hermitiens}

Let $\mathscr X$ be a projective model of~$X$ over $\Spec(\mathfrak o_K)$, \ie a projective scheme over $\Spec(\mathfrak o_K)$ whose generic fiber $\mathscr X_K$ is isomorphic to~$X$. The rational point~$P$ ~extends to a section~$\mathscr P$ of the morphism $\pi:\mathscr X\rightarrow\Spec\mathfrak o_K.$ Let $\overline{\mathscr L}$ be a Hermitian line bundle on~$\mathscr X$ whose restriction $L=\mathscr L_K$ to~$X$ is ample.

Set $\mathscr E_D=\Gamma(\mathscr X,\mathscr L^{\otimes D})$. It is a projective $\mathfrak o_K$-module of finite type.
Let $\check\t_{P}\widehat V$ be the image of $\mathscr P^*\Omega^1_{\mathscr X/\mathfrak o_K}$ by the map
\[\mathscr P^*\Omega^1_{\mathscr X/\mathfrak o_K}\rightarrow\left(\mathscr P^*\Omega^1_{\mathscr X/\mathfrak o_K}\right)_K\simeq \Omega^1_{X/K,P} \rightarrow (T_{P}\widehat V)^\vee.\]
The restriction to~$K$ of the projective $\mathfrak o_K$-module of finite type $\check\t_{P}\widehat V$ is isomorphic to $T_{P}\widehat V ^\vee.$ Equipped with the dual metrics of the metrics $\| \cdot \|_{\sigma}$ on the $\bC$-vector spaces $T_{P}\widehat V\otimes_{\fko_K,\sigma}\bC$, $\check\t_{P}\widehat V$ is a Hermitian vector bundle $\overline{\check\t_{P}\widehat V}$ on $\Spec\mathfrak o_K.$ Its symmetric powers naturally inherit a structure of $\fko_K$-Hermitian vector bundle; for all non-negative integers~$k$, let $\|\cdot\|_{\sigma,\Sym,k}$ denote the norm on $\Sym^k (T_{P}\widehat V)^\vee$ associated with an embedding~$\sigma$ of~$K$ in~$\bC$.

We also define a structure of Hermitian vector bundle over $\Spec\fko_K$ on $\mathscr E_D:=\Gamma(\mathscr X,\mathscr L^{\otimes D})$. It is a projective $\mathfrak o_K$-module of finite type, and for all embedding $\sigma:K\hookrightarrow\bC$ we define a metric on $\mathscr E_{D,\sigma}=\mathscr E_D\otimes_{\mathfrak o_K,\sigma}\bC$ by
	\[\|s\|_{\sigma,\infty}:=\sup_{x\in\mathscr X_{K,\sigma}(\bC)}\|s(x)\|_{\sigma}.\]

Following H.~Chen \citep{chen_heights} and \'E.~Gaudron \citep{gaudron_fibresadeliques} (Paragraph 4.2), let us consider the \emph{John norm}, denoted by $\| \cdot \|_{\sigma,J}$, 
associated with the norm $\| \cdot \|_{\sigma,\infty}$. By definition, this norm is, among the Hermitian norms at least equal to $\| \cdot \|_{\sigma,\infty}$, the norm whose unit has a minimal volume.
These norms can be compared in the following way:
	\begin{equation}\label{comparaison norme John}\| \cdot \|_{\sigma,\infty}\leqslant \| \cdot \|_{\sigma,J} \leqslant \sqrt{\rk(E_D)}\|\cdot \|_{\sigma,\infty}.\end{equation}
Equipped with these norms $\| \cdot \|_{\sigma,J}$, $\mathscr E_D$ has a structure of Hermitian vector bundle~$\overline{\mathscr E_D}$ over~$\Spec o_K.$

\begin{definition}
Let $\ol E$, $\ol F$ be two Hermitian vector bundles on~$\Spec\fko_K$ and let~$\varphi$ be a non-zero $K$-linear map from $E_K=E\otimes_{\fko_K} K$ to $F_K=F\otimes_{\fko_K} K$. Let~$v$ be a place of~$K$. The \emph{height of~$\varphi$ at the place place~$v$} is the logarithm of the operator norm of~$\varphi$ extended to a linear map from~$E_v$ to~$F_v$, where $E_v$ and $F_v$ are the completions of~$E_K$ and~$F_K$~at the place~$v$ :
\[h_v(\varphi)=\log\|\varphi\|_v=\log\left(\sup_{e\in E_v\setminus\{0\}}\frac{\|\varphi(e)\|_v}{\|e\|_v}\right).\]

The \emph{height of~$\varphi$} is the sum of the heights of~$\varphi$ at every place of~$K$ :
	\begin{equation}\label{def hauteur}
	h(\varphi) = \sum_{\fkp}h_{\fkp}(\varphi)+\sum_{\sigma:K\hookrightarrow\bC}h_{\sigma}(\varphi).\end{equation}
\end{definition}

This definition of height is the usual definition in Arakelov theory. It will be useful to rewrite it in a slightly different way, so as to make the Archimedean and ultrametric places play more similar roles, as in~\citep{chambert-loir_equidistributionsytemesdynamiques}.

If~$p$ is a prime number, we denote by~$\bC_p$ the completion of an algebraic closure of the field of $p$-adics~$\bQ_p$. We still denote by~$|\cdot|_p$ the unique absolute value on~$\bQ_p$ which extends the $p$-adic absolute value on~$\bQ.$ Then, with every embedding~$\sigma_p$ of $K$ in~$\bC_p$ we can associate an absolute value on~$K$ by setting, for~$x\in K$ :
\[|x|_{\sigma_p}=|\sigma_p(x)|_{p}.\]
Denote by~$\bC_{\infty}$ the field of complex numbers~$\bC$ and by~$|\cdot|_{\infty}$ the usual absolute value on~$\bC_{\infty}=\bC$. Then, the Archimedean absolute values on~$K$ extending the usual absolute value on~$\bQ$ are the $x\mapsto|\sigma(x)|_{\infty},$ for $\sigma:K\hookrightarrow\bC$.

\begin{prop}\label{hauteur somme plongements}
Let $\ol E$ and $\ol F$ be two $\fko_K$-Hermitian vector bundles and let~$\varphi$ be a non-trivial $K$-linear map from $E_K=E\otimes_{\fko_K} K$ to $F_K=F\otimes_{\fko_K} K$. Then
\begin{equation}\label{hauteur somme sur plongements}
	h(\varphi)=\sum_{p\leqslant\infty}\sum_{\sigma:K\hookrightarrow\bC_p}h_{\sigma}(\varphi),\end{equation}
where in the first sum the index~$p$ describes the union of the set of prime numbers and the singleton~$\{\infty\}$.
\end{prop}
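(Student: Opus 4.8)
The plan is to reduce the statement to a bookkeeping comparison between the two normalizations of the local heights. First I would observe that the term $p=\infty$ on the right-hand side of~\eqref{hauteur somme sur plongements} is, by the definitions recalled just before the proposition ($\bC_\infty=\bC$, and $|\cdot|_\infty$ the usual absolute value), literally the Archimedean sum $\sum_{\sigma:K\hookrightarrow\bC}h_\sigma(\varphi)$ appearing in~\eqref{def hauteur}. Hence it suffices to treat the finite places, and in fact to prove, for each prime number $p$ and each maximal ideal $\fkp\mid p$, the identity
\[
h_\fkp(\varphi)=\sum_{\sigma}h_\sigma(\varphi),
\]
the inner sum running over the embeddings $\sigma:K\hookrightarrow\bC_p$ that induce the place $\fkp$ on $K$; since every such embedding induces exactly one place above $p$, the ideals $\fkp\mid p$ then partition $\{\sigma:K\hookrightarrow\bC_p\}$, and summing over $\fkp\mid p$ and over $p$ recovers $\sum_\fkp h_\fkp(\varphi)$.

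Next I would fix $\fkp\mid p$ and compare the local data. Any $\sigma:K\hookrightarrow\bC_p$ inducing $\fkp$ extends uniquely to a continuous embedding $K_\fkp\hookrightarrow\bC_p$, and conversely; as $K_\fkp/\bQ_p$ is finite, hence separable, there are exactly $[K_\fkp:\bQ_p]$ such embeddings. On $K$ one has $|p|_\sigma=|p|_p=p^{-1}$, whereas by the normalization~\eqref{normalization norme p-adique} one has $|p|_\fkp=p^{-[K_\fkp:\bQ_p]}$; since $|\cdot|_\sigma$ and $|\cdot|_\fkp$ both define the place $\fkp$, this forces $|\cdot|_\fkp=|\cdot|_\sigma^{[K_\fkp:\bQ_p]}$. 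Now the norm at $\fkp$ of a Hermitian vector bundle $\ol E$ on $\Spec\fko_K$ is the lattice norm on $E\otimes_{\fko_K}K_\fkp$ attached to the $\fko_{K_\fkp}$-lattice $E\otimes_{\fko_K}\fko_{K_\fkp}$ and the absolute value $|\cdot|_\fkp$, while $E_\sigma:=E\otimes_{K,\sigma}\bC_p$ carries the norm obtained from the same lattice by extension of scalars along $\sigma$ using $|\cdot|_p$. Because a lattice norm (defined by a monotone infimum) is replaced by its $c$-th power when the underlying absolute value is, and because the operator norm is unchanged under extension of scalars from $K_\fkp$ to its completed algebraic closure, one gets $\|\varphi\|_\sigma=\|\varphi\|_\fkp^{1/[K_\fkp:\bQ_p]}$, that is $h_\sigma(\varphi)=h_\fkp(\varphi)/[K_\fkp:\bQ_p]$; this value is independent of the chosen $\sigma$ above $\fkp$, since two such embeddings differ by an automorphism of $\bC_p$ over $\bQ_p$, which is an isometry.

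Summing over the $[K_\fkp:\bQ_p]$ embeddings above $\fkp$ gives $\sum_{\sigma}h_\sigma(\varphi)=h_\fkp(\varphi)$; summing this over $\fkp\mid p$ and over all primes $p$, and adjoining the Archimedean term identified in the first step, turns~\eqref{def hauteur} into~\eqref{hauteur somme sur plongements}. The genuinely delicate point is the third step: one must check carefully that, for the finite-place structure of a Hermitian (equivalently, lattice-normed) vector bundle, the operator norm indeed scales as the prescribed power of the absolute value and is insensitive both to the choice of embedding $\sigma$ above $\fkp$ and to the passage from $K_\fkp$ to $\bC_p$. This is the standard behaviour of lattice norms in $p$-adic Arakelov theory, in the spirit of the reformulation used in~\citep{chambert-loir_equidistributionsytemesdynamiques}, but it is where all the normalization constants have to be tracked with care; the Archimedean comparison and the counting of embeddings are then immediate.
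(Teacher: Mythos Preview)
Your proposal is correct and follows precisely the approach the paper indicates: the paper's own proof is the single sentence ``It follows from the definition of the heights and the normalization~\eqref{normalization norme p-adique} we chose for the $\fkp$-adic norm on~$K$,'' and what you have written is a careful unpacking of exactly that sentence---matching the Archimedean terms directly, partitioning the embeddings $K\hookrightarrow\bC_p$ according to the induced place $\fkp\mid p$, counting $[K_\fkp:\bQ_p]$ embeddings above each $\fkp$, and using $|\cdot|_\fkp=|\cdot|_\sigma^{[K_\fkp:\bQ_p]}$ to get $h_\fkp(\varphi)=[K_\fkp:\bQ_p]\,h_\sigma(\varphi)$. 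Your caveat about the behaviour of lattice norms under a power of the absolute value and under extension of scalars to $\bC_p$ is well placed; both facts reduce to the explicit formula $\|\sum a_i e_i\|=\max_i|a_i|$ in a lattice basis, so the check is routine.
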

\begin{proof} It follows from the definition of the heights and the normalization~\eqref{normalization norme p-adique} we chose for the $\fkp$-adic norm on~$K$.
%
%
%
\end{proof}

\begin{lemme}\label{comportement hauteur extension des scalaires}
Let $K$ be a number field, $\ol E,\ol F$ two $\fko_K$-Hermitian vector bundles and set $E_K=E\otimes_{\fko_K} K$ and $F_K=F\otimes_{\fko_K} K$. Let $\varphi:E_K\to F_K$ be a non-zero $K$-linear map. Let~$K'$ be a finite extension of~$K$. Then, for every maximal ideal~$\fkp$ of~$\fko_K$, we have
	\[\frac{1}{[K':\bQ]}\sum_{\substack{\fkq\in\Specmax\fko_{K'},\\\fkq|\fkp}}h_{\fkq}(\varphi\otimes_K K')=\frac{1}{[K:\bQ]}h_{\fkp}(\varphi).\]
\end{lemme}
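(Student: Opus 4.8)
The plan is to exploit the fact that at a \emph{finite} place $\fkp$ of $K$ the norm carried by a Hermitian vector bundle is simply the sup-norm attached to a lattice, so that — unlike at the Archimedean places — it behaves in a completely transparent way under extension of the complete ground field. Write $E,F$ for the underlying $\fko_K$-modules of $\ol E,\ol F$. Since $\fko_{K_\fkp}$ is a discrete valuation ring, the lattices $E\otimes_{\fko_K}\fko_{K_\fkp}$ and $F\otimes_{\fko_K}\fko_{K_\fkp}$ are free; fixing bases $(e_i)$ and $(f_j)$ of them I would observe that $\|\cdot\|_\fkp$ is the sup-norm in these bases, and likewise, for each $\fkq\mid\fkp$ in $\fko_{K'}$, that $\|\cdot\|_\fkq$ on $E_{K'_\fkq}=E_{K_\fkp}\otimes_{K_\fkp}K'_\fkq$ is the sup-norm in the \emph{same} bases (now with $\fko_{K'_\fkq}$-coefficients), because the integral model of $\ol E\otimes_K K'$ at $\fkq$ is $E\otimes_{\fko_K}\fko_{K'_\fkq}$.

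The next step is to compute $h_{\fkq}(\varphi\otimes_K K')$ in terms of $h_{\fkp}(\varphi)$ for a fixed $\fkq\mid\fkp$. Introducing the unique absolute value $w$ on $K'_\fkq$ extending $|\cdot|_\fkp$ on $K_\fkp$, and the associated sup-norms $\|\cdot\|_w$, I would first check that the operator norm is \emph{exactly} invariant: $\|\varphi\otimes_K K'\|_w=\|\varphi\|_\fkp$. The inequality $\geqslant$ is immediate since $w$ and the norms $\|\cdot\|_w$ restrict to $|\cdot|_\fkp$ and $\|\cdot\|_\fkp$ on $E_{K_\fkp}$ and $F_{K_\fkp}$; for $\leqslant$, expanding a vector $\textstyle\sum_i c_ie_i\in E_{K'_\fkq}$ in the chosen basis and using the ultrametric inequality together with $\|e_i\|_\fkp=1$ gives $\|(\varphi\otimes_K K')(\textstyle\sum_i c_ie_i)\|_w\leqslant(\max_i|c_i|_w)\,\|\varphi\|_\fkp=\|\textstyle\sum_i c_ie_i\|_w\,\|\varphi\|_\fkp$. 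Then I would match $w$ with the normalized $|\cdot|_\fkq$: both induce the $\fkq$-adic topology, so $|\cdot|_\fkq=w^{s}$ for some $s>0$, and evaluating at a rational prime $p$ (using $|p|_\fkq=p^{-[K'_\fkq:\bQ_p]}$ and $w(p)=|p|_\fkp=p^{-[K_\fkp:\bQ_p]}$) forces $s=[K'_\fkq:K_\fkp]$. Since a lattice sup-norm is raised to the power $s$ exactly when the absolute value is, this yields $\|\varphi\otimes_K K'\|_\fkq=\|\varphi\|_\fkp^{[K'_\fkq:K_\fkp]}$, i.e. $h_{\fkq}(\varphi\otimes_K K')=[K'_\fkq:K_\fkp]\,h_{\fkp}(\varphi)$.

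Finally I would sum over the primes $\fkq\mid\fkp$: the fundamental identity $\sum_{\fkq\mid\fkp}[K'_\fkq:K_\fkp]=[K':K]$ (equivalently $K'\otimes_K K_\fkp\simeq\prod_{\fkq\mid\fkp}K'_\fkq$) gives $\sum_{\fkq\mid\fkp}h_{\fkq}(\varphi\otimes_K K')=[K':K]\,h_{\fkp}(\varphi)$, and dividing by $[K':\bQ]=[K':K]\,[K:\bQ]$ is exactly the assertion. I expect the only genuine obstacle to be the \emph{exact} — not merely approximate — invariance of the operator norm under the complete field extension $K_\fkp\hookrightarrow K'_\fkq$ used above: this is where one really must use that at a finite place the Hermitian norms are split sup-norms in compatible bases (for Archimedean Hermitian norms the analogous statement would fail), together with a little care about the two competing normalizations of the $\fkp$-adic and $\fkq$-adic absolute values.
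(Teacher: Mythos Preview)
Your proof is correct and reaches the same key local identity as the paper, namely $h_{\fkq}(\varphi\otimes_K K')=[K'_{\fkq}:K_{\fkp}]\,h_{\fkp}(\varphi)$, followed by the summation $\sum_{\fkq\mid\fkp}[K'_{\fkq}:K_{\fkp}]=[K':K]$. The route to that local identity differs slightly: the paper clears a denominator so that $n\varphi$ maps $E_{\fko_{\fkp}}$ into $F_{\fko_{\fkp}}$, then invokes the elementary-divisor theorem over the DVR $\fko_{K_{\fkp}}$ to produce a basis $(e_1,\dots,e_{\ell})$ of $F_{\fko_{\fkp}}$ and scalars $a_i$ with $(a_1e_1,\dots,a_me_m)$ a basis of the image, whence $\|n\varphi\|_{\fkp}=\max_i|a_i|_{\fkp}$ and $\|n\varphi\otimes_K K'\|_{\fkq}=\max_i|a_i|_{\fkq}=\|n\varphi\|_{\fkp}^{[K'_{\fkq}:K_{\fkp}]}$. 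Your argument bypasses this structural reduction and instead establishes directly, via the ultrametric inequality applied to a lattice basis of the source, that the operator norm is invariant under the complete extension $K_{\fkp}\hookrightarrow K'_{\fkq}$ for the \emph{extending} absolute value, and then adjusts the normalization. Both arguments are short; the paper's version has the minor advantage of exhibiting the operator norm as a concrete maximum of valuations of scalars, while yours avoids any appeal to the structure theorem and makes more transparent \emph{why} the Archimedean analogue would fail.
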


\begin{proof}
Let~$K_{\fkp}$ be the completion of~$K$ for the $\fkp$-adic absolute value. The map~$\varphi$ extends to a map $K_{\fkp}$-linear we still denote by~$\varphi$. Since~$F_{\fkp}$ is a $K_{\fkp}$-vector space of finite dimension, there exists a positive integer~$n$ such that $n\varphi$ maps~$E_{\fko_{\fkp}}$ in~$F_{\fko_{\fkp}}$. There exist positive integers~$\ell$ and~$m$, there exist a basis $(e_1,\dots,e_{\ell})$ of~$F_{\fko_{\fkp}}$ and integers $a_1,\dots,a_m$ such that $(a_1e_1,\dots,a_me_m)$ is a basis of $\im(n\varphi_{|E_{\fko_{\fkp}}})$.

Then 
	\[\|n\varphi\|_{\fkp}=\max_{1\leqslant i\leqslant m}|a_i|_{\fkp}.\]
Let $\fkq$ be a prime ideal of~$\fko_{K'}$ lying above~$\fkp$. Then 
	\begin{equation}\label{h(phi) et extension des scalaires}
	\|n\varphi\otimes_K K'\|_{\fkq}=\max_{1\leqslant i\leqslant m}|a_i|_{\fkq}=\max_{1\leqslant i\leqslant m}{|a_i|_{\fkp}}^{e_{\fkq}f_{\fkq}}={\|n\varphi\|_{\fkp}}^{e_{\fkq}f_{\fkq}},
	\end{equation}
where $f_{\fkq}$ is the residue class degree of~$\fkq$ over~$\fkp$, $e_{\fkq}$ the ramification index and their product~$e_{\fkq}f_{\fkq}$ is equal to the local degree~$[K'_{\fkq}:K_{\fkp}]$. Hence,
	\begin{align*}
	\sum_{\substack{\fkq\in\Specmax\fko_{K'},\\\fkq|\fkp}}\log\|\varphi\otimes_K K'\|_{\fkq}&=\sum_{\substack{\fkq\in\Specmax\fko_{K'},\\\fkq|\fkp}}[K'_{\fkq}:K_{\fkp}]\log \|\varphi\|_{\fkp}\\
	&=\log\|\varphi\|_{\fkp}[K':K].\qedhere
	\end{align*}
\end{proof}

Now we come back to the evaluation morphisms $\varphi^k_{D,\widehat V}$ defined by Formula~\eqref{morphisme evaluation 1 point}. Denote by $h_J(\varphi)$ the height of~$\varphi^k_{D,\widehat V}$ with respect to the John norms, that is to say $h_J(\varphi^k_{D,\widehat V})=\sum_{\fkp}\log\|\varphi^k_{D,\widehat V}\|_{\fkp}+\sum_{\sigma:K\hookrightarrow\bC}\log\|\varphi^k_{D,\widehat V} \|_{\sigma,J}$, where
	\[\|\varphi^k_{D,\widehat V} \|_{\sigma,J}=\sup_{s\in E_{\sigma}\setminus\{0\}}\frac{\| \varphi^k_{D,\widehat V}(s)\|_{\sigma.}}{\|s\|_{\sigma,J}}.\]

We also define the height $h(\varphi^k_{D,\widehat V})$ obtained replacing the Hermitian norms on $\mathscr E_{D,\sigma}$ by the infinity norm, and keeping the same norms $\|\cdot\|_{\sigma,\Sym^k}$ on $\Sym^k\left(\Omega^1_{\widehat{V}}\right)\otimes L^D_{P}$,
\[h(\varphi^k_{D,\widehat V})=\sum_{\fkp}\log\|\varphi^k_{D,\widehat V}\|_{\fkp}+\sum_{\sigma:K\hookrightarrow\bC}\log\|\varphi^k_{D,\widehat V}\|_{\sigma,\infty},\]
where $\|\varphi^k_{D,\widehat V}\|_{\sigma,\infty}=\sup_{s\in E^k_{D,\sigma}\setminus\{0\}}\frac{\left\|\varphi^k_{D,\widehat V}(s)\right\|_{\sigma,\Sym^k}}{\|s\|_{\sigma,\infty}}.$

From~\eqref{comparaison norme John},
we have \begin{equation}\label{majoration hauteur john par norme infinie}h_J(\varphi^k_{D,\widehat V})\leqslant h(\varphi^k_{D,\widehat V}).\end{equation}

\subsection{$\alpha$-arithmetic formal subschemes} \label{section alpha arithmetique}

Let~$X$ be a projective variety defined over a number field~$K$, let~$P$ be a $K$-rational point of~$X$ and let~$\widehat V$ be a smooth formal subscheme of~$\widehat X_P$. Let~$L$ be an ample line bundle on~$X$. For all couples of non-negative integers $(k,D)$, denote by~$\varphi^k_{D,\widehat V}$ the associated evaluation morphism:
\[\varphi^k_{D,\widehat V}:E^k_{D,\widehat V}\to \Sym^k\left(\Omega^1_{\widehat V}\right)\otimes L^D_{|P},\]
where $E^k_{D,\widehat V}=\{s\in H^0(X,L^D)\ |\ s_{|(V)_{k-1}}=0\}.$

\begin{definition}\label{def S,alpha-arithmetique}
Let $\alpha$ be a non-negative real number and $S$ be a finite subset of (finite or Archimedean) places of~$K$. A smooth formal subscheme~$\widehat V$ is said to be \emph{$(S,\alpha)$-arithmetic} if for all~$\ol\alpha>\alpha$, there exist $C>0$ and a family of non-negative real numbers $(C_v)_{v\in\Sigma_K}$ such that, for all non-negative integers~$D$ et~$k$, the evaluation morphism $\varphi_{D,\widehat V}^k$ satisfies
	\begin{equation}\frac{1}{[K:\mathbf Q]}\sum_{v\in\Sigma_K\setminus S}h_{v}(\varphi_{D,\widehat V}^k)\leqslant \ol\alpha k\log k +C(k+D)
	\label{S,alpha arithmetique},\end{equation}
and for each place~$v$ of~$K$,
	\[h_{v}(\varphi_{D,\widehat V}^k)\leqslant C_v(k+D).\]
\end{definition}

\begin{definition}\label{def alpha-arithmetique}
Let $\alpha$ be a non-negative real number. A smooth formal subscheme~$\widehat V$ is \emph{$\alpha$-arithmetic} if it is $(S,\alpha)$-arithmetic for all finite subsets~$S$ of places of $K$.
\end{definition}

\begin{lemme}\label{alpha-arithmetique independant du corps}
Let~$X$ be a projective variety over a number field~$K$, let~$P$ be a $K$-rational point of~$X$ and~$\widehat V$ be a smooth formal subscheme of~$\widehat X_P$. Let~$K'$ be a finite extension of the field~$K$ and let~$\alpha$ be a non-negative real number.

Then the formal subscheme~$\widehat V$ is $\alpha$-arithmetic if and only if $\widehat V\otimes_K K'$ is $\alpha$-arithmetic.
\end{lemme}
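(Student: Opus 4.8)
The plan is to reduce the equivalence to the compatibility of heights of linear maps with finite base change, which is exactly Lemma~\ref{comportement hauteur extension des scalaires}, and then to unwind the two conditions in Definition~\ref{def S,alpha-arithmetique} over $K$ and over $K'$. First I would fix a projective model $\mathscr X/\Spec\fko_K$ and a Hermitian line bundle $\overline{\mathscr L}$ as in Paragraph~\ref{fibres vectoriels hermitiens}, together with the section $\mathscr P$ extending $P$; base-changing to $\fko_{K'}$ produces a model $\mathscr X'$ and bundle $\overline{\mathscr L'}$ adapted to $(X_{K'},\widehat V\otimes_K K',P)$, and one checks that the evaluation morphism $\varphi^k_{D,\widehat V\otimes_K K'}$ is canonically identified with $\varphi^k_{D,\widehat V}\otimes_K K'$, since forming $E^k_D$, $\Sym^k(\Omega^1_{\widehat V})$ and the fibre $L^D_{|P}$ all commute with the flat base change $K\to K'$. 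This identification is the bookkeeping step that makes everything else formal.

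Next I would translate the defining inequalities. For the first condition in~\eqref{S,alpha arithmetique}, fix a finite set $S$ of places of $K$ and let $S'$ be the set of places of $K'$ lying above $S$. Summing the equality of Lemma~\ref{comportement hauteur extension des scalaires} over the finite places $\fkp\notin S$ (and doing the analogous, easier computation at the Archimedean places, where the local degrees again sum correctly) gives
\[
\frac{1}{[K':\bQ]}\sum_{v'\in\Sigma_{K'}\setminus S'}h_{v'}\bigl(\varphi^k_{D,\widehat V}\otimes_K K'\bigr)=\frac{1}{[K:\bQ]}\sum_{v\in\Sigma_K\setminus S}h_{v}\bigl(\varphi^k_{D,\widehat V}\bigr).
\]
Hence the normalized sum appearing in~\eqref{S,alpha arithmetique} is literally unchanged by base change, so the bound $\ol\alpha k\log k+C(k+D)$ holds for one if and only if it holds for the other, with the same constants. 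For the second, pointwise condition, the single place $v'$ of $K'$ above $v$ satisfies $h_{v'}(\varphi\otimes_K K')=[K'_{v'}:K_v]\,h_v(\varphi)$ by~\eqref{h(phi) et extension des scalaires} and its Archimedean counterpart, so $h_v(\varphi^k_{D,\widehat V})\leqslant C_v(k+D)$ for all $v$ is equivalent to $h_{v'}(\varphi^k_{D,\widehat V}\otimes_K K')\leqslant C_{v'}(k+D)$ for all $v'$, upon setting $C_{v'}=[K'_{v'}:K_v]C_v$ (and conversely taking a maximum over the finitely many $v'$ above a given $v$). Since both properties are stable under enlarging $S$, and since every finite set of places of $K'$ is contained in the set of places above some finite set of places of $K$, Definition~\ref{def alpha-arithmetique} for $K$ and for $K'$ match up: $\widehat V$ is $\alpha$-arithmetic iff $\widehat V\otimes_K K'$ is.

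The main obstacle I anticipate is not analytic but purely a matter of care with normalizations: one must be sure that the $\fkp$-adic absolute values on $K$ and $K'$ are normalized compatibly (which is the content of~\eqref{normalization norme p-adique} and is exactly what makes Lemma~\ref{comportement hauteur extension des scalaires} come out with the factor $[K':K]$ rather than $1$), and one must handle the Archimedean places in parallel — an embedding $\sigma:K\hookrightarrow\bC$ extends to $[K':K]$ embeddings $\sigma':K'\hookrightarrow\bC$, and the Hermitian metrics at $\sigma'$ are by construction the pullbacks of those at $\sigma$, so $h_{\sigma'}=h_\sigma$ and the Archimedean part of the sum is multiplied by $[K':K]$ exactly as the finite part is. Once the identification $\varphi^k_{D,\widehat V\otimes_K K'}=\varphi^k_{D,\widehat V}\otimes_K K'$ and these normalization facts are in place, the equivalence is immediate; there are no estimates to grind through.
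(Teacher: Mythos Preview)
Your proposal is correct and follows the same approach as the paper: both reduce the lemma to the single-place identity $h_{\fkq}(\varphi\otimes_K K')=[K'_{\fkq}:K_{\fkp}]\,h_{\fkp}(\varphi)$ from~\eqref{h(phi) et extension des scalaires} (equivalently Lemma~\ref{comportement hauteur extension des scalaires}) combined with the definition of $\alpha$-arithmeticity. The paper's proof is in fact much terser than yours—it simply records that identity and declares the result to follow from the definition—so your version, which carefully unwinds both conditions of Definition~\ref{def S,alpha-arithmetique}, tracks the correspondence $S\leftrightarrow S'$, and treats the Archimedean places explicitly, is a more complete write-up of the same argument.
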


\begin{proof} The proof follows from the definition of $\alpha$-arithmeticity and the following fact. Write $\pi:\Spec\fko_{K'}\to\Spec\fko_K$, let $\fkq$ be a maximal ideal of~$\fko_{K'}$ and $\fkp=\pi(\fkq)$ the maximal ideal of~$\fko_K$ lying under~$\fkq$, then, from~\eqref{h(phi) et extension des scalaires} one has
	\begin{equation*}
	h_{\fkq}(\varphi^k_{D,\widehat V}\otimes_K K')=[K'_{\fkq}:K_{\fkp}]h_{\fkp}(\varphi^k_{D,\widehat V}).\qedhere
	\end{equation*}
\end{proof}

The evaluation morphism defined in Paragraph~\ref{section morphisme evaluation} and denoted by $\varphi_{D,\widehat V}^k$ depends on the choice of the ample line bundle~$L$ on~$X$ and on the choice of integral models of~$X$ and~$L$ over $\Spec\fko_K$.

The following propositions imply the fact that, for a formal subscheme,
satisfying Definition~\ref{def alpha-arithmetique} 
does not depend on these choices.
The arguments come from the article~\citep{bost_chambert-loir_analyticcurves}, Proposition~4.7.

The independence of the choice of models can be proved exactly in the same way as the part a)
of this proposition; changing the model only modify the left side by a term bounded from above by $C(k+D)$.
Let us handle with the independency on the line bundle. We precise with an index the line bundle with respect to which the evaluation morphism is defined: thus, for all non-negative integers $k,D$, we write $E_{D,L}=\Gamma(X,L^{\otimes D})$ and $\varphi^k_{D,\widehat V,L}$ the evaluation morphism $E^k_D\rightarrow\Sym^k(\Omega^1_{\widehat V_P})\otimes L_{|P}^{\otimes D}$. 

\begin{prop}
\begin{enumerate}
\item Let $b$ be a positive integer and~$L$ be an ample line bundle on~$X$. If $\varphi^k_{D,\widehat V,L}$ satisfies Inequality~\eqref{S,alpha arithmetique}, then so does~$\varphi^k_{D,\widehat V,L^{\otimes b}}$.
\item Let~$L$ and~$M$ be two ample line bundles on~$X.$ Assume there exists $\sigma\in\Gamma(X,M\otimes L^{-1})$ which does not vanish at~$P.$
Then there exists $C>0$ such that $\|\varphi^k_{D,\widehat V,L}\|_{\fkp}\leqslant \|\varphi_{D,\widehat V,M}^k\|_{\fkp}C^D.$
\end{enumerate}
\end{prop}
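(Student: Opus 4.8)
The plan is to prove the two stability statements by comparing evaluation morphisms at each finite place and controlling the discrepancy by a term of the form $C(k+D)$, which is harmless for Inequality~\eqref{S,alpha arithmetique}.

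\textbf{Part 1.} First I would make the identification $E_{D,L^{\otimes b}} = \Gamma(X, (L^{\otimes b})^{\otimes D}) = \Gamma(X, L^{\otimes bD}) = E_{bD,L}$, compatible with the filtrations by order of vanishing along $\widehat V$, and likewise $\Sym^k(\Omega^1_{\widehat V})\otimes (L^{\otimes b})^{\otimes D}_{|P} = \Sym^k(\Omega^1_{\widehat V})\otimes L^{\otimes bD}_{|P}$. Under these identifications $\varphi^k_{D,\widehat V, L^{\otimes b}} = \varphi^k_{bD,\widehat V,L}$ \emph{as $K$-linear maps}. The only subtlety is that the integral and Hermitian structures used to compute heights need not literally match: the model $\mathscr L^{\otimes b}$ and the metric $\|\cdot\|^{\otimes b}$ on it give, at each place, a norm on $E_{bD,L}$ that differs from the chosen one by a bounded factor, uniformly in $D$ (this is again of the type handled for the change of model), and similarly for the target after taking $k$-th symmetric powers. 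Hence $h_v(\varphi^k_{D,\widehat V,L^{\otimes b}})$ and $h_v(\varphi^k_{bD,\widehat V,L})$ differ by at most $C_v(k+D)$ at each place, with $\sum_v$ of the non-exceptional contributions bounded by $C(k+D)$. Plugging $D' = bD$ into~\eqref{S,alpha arithmetique} for $\varphi^k_{\cdot,\widehat V,L}$ gives $\overline\alpha k\log k + C(k+bD) = \overline\alpha k\log k + C'(k+D)$, so $\varphi^k_{D,\widehat V,L^{\otimes b}}$ satisfies the inequality with the same $\overline\alpha$; the per-place bounds transport similarly. One also notes that replacing $L$ by $L^{\otimes b}$ preserves ampleness, so all objects are defined.

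\textbf{Part 2.} Fix $\sigma \in \Gamma(X, M\otimes L^{-1})$ with $\sigma(P)\neq 0$. Tensoring by $\sigma^{\otimes D}$ gives, for each $D$, an injection $E_{D,L} = \Gamma(X,L^{\otimes D}) \hookrightarrow \Gamma(X, M^{\otimes D}) = E_{D,M}$, $s\mapsto \sigma^{\otimes D}\cdot s$. Since multiplication by $\sigma$ raises neither the vanishing order along $\widehat V$ at $P$ (as $\sigma(P)\neq 0$) this injection respects the filtrations: $s_{|(V)_{k-1}}=0 \iff (\sigma^{\otimes D} s)_{|(V)_{k-1}}=0$. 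Moreover, on $E^k_{D,L}$ the diagram relating $\varphi^k_{D,\widehat V,L}$ and $\varphi^k_{D,\widehat V,M}\circ(\text{mult. by }\sigma^{\otimes D})$ commutes up to the invertible scalar $\sigma(P)^{\otimes D}$ acting on $L^{\otimes D}_{|P}$ versus $M^{\otimes D}_{|P}$, i.e. on the target factor $L^D_{|P}$. Therefore at a finite place $\fkp$ one gets $\|\varphi^k_{D,\widehat V,L}\|_{\fkp} \leqslant \|\varphi^k_{D,\widehat V,M}\|_{\fkp}\cdot\|\text{mult. by }\sigma^{\otimes D}\|_{\fkp}\cdot\|\sigma(P)^{-\otimes D}\|_{\fkp}$. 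Both of the latter factors are $D$-th powers of fixed quantities depending only on $\sigma$ (and on the chosen models/metrics), so their product is $\leqslant C^D$ for a suitable $C>0$ depending on $\fkp$; choosing $C$ uniform for all but finitely many $\fkp$ (using that $\sigma$ and $\sigma(P)$ are $\fkp$-integral and $\fkp$-units respectively outside a finite set) yields the claim.

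\textbf{Main obstacle.} The conceptual content is light; the real care is bookkeeping of the auxiliary constants across the infinitely many places and their uniformity in $D$ and $k$. In Part 1 one must check that passing from a norm to its $b$-th tensor power and then to the $k$-th symmetric power distorts operator norms only by factors whose logarithms grow at most linearly in $k+D$ and which are summable over the non-exceptional places — this is exactly the estimate invoked for the change of integral model, so it may be cited rather than reproven. In Part 2 the delicate point is the dependence of the constant $C$ on the place: one should split off the finite bad set (Archimedean places, and the finitely many $\fkp$ where $\sigma$ fails to extend integrally or where $\sigma(\mathscr P)$ fails to be a unit) and absorb those into the per-place bounds $h_v \leqslant C_v(k+D)$, while getting a genuinely uniform $C$ at the remaining places so that the sum over $\Sigma_K\setminus S$ stays of the form $C(k+D)$.
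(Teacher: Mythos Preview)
Your proposal is correct and follows essentially the same approach as the paper. In Part~1 the paper simply observes that $\varphi^k_{D,\widehat V,L^{\otimes b}}$ literally coincides with $\varphi^k_{bD,\widehat V,L}$ with equal norms (the integral and Hermitian structures on $\mathscr L^{\otimes b}$ are by construction the tensor powers of those on $\mathscr L$, so your extra caution about model discrepancies is unnecessary), and in Part~2 the paper carries out exactly your computation, writing the constant explicitly as $C=\|\sigma\|_{\fkp}\,\|\sigma(P)\|_{\fkp}^{-1}$ at the given place; your additional discussion of uniformity over places goes beyond what the stated inequality (which is at a single $\fkp$) requires.
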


\begin{proof}
The evaluation morphism defined with respect to the line bundle~$L^{\otimes b}$, 
\[\varphi^k_{D,\widehat V,L^{\otimes b}}:E^k_{D,L^b}\rightarrow\Sym^k(\Omega^1_{\widehat V_P})\otimes L_{|P}^{\otimes Db},\]
coincides with $\varphi^k_{bD}$. Therefore their norms are equal, 
 and this proves the first point.

Let $s\in E^k_{D,L}$ be a section which vanishes with order~$k$ along~$\widehat V.$ Then $s\otimes\sigma^D\in E^k_{D,M}.$ Moreover,
\[\varphi^k_{D,M}(s\otimes\sigma^D)=\varphi^k_{D,L}(s)\otimes\sigma(P)^D.\]

Thus,
	\begin{align*}
	\|\varphi^k_{D,\widehat V,L}(s)\|&=\|\varphi^k_{D,\widehat V,M}(s\otimes\sigma^D)\|\cdot\|\sigma(P)\|^{-D}
	\leqslant \|\varphi^k_{D,\widehat V,M}\| \cdot\|s\|C^D,
	\end{align*} 
setting $C=\|\sigma\|\|\sigma(P)\|^{-1}.$\qedhere
\end{proof}

\begin{prop}\label{alpha analytique arithmetique}
Let $X$ be a projective variety over a number field~$K$, let $P$ be a smooth $K$-rational point of~$X$ and let $\widehat V$ be a smooth formal subscheme of~$\widehat X_P$. Let $\alpha\in\bR_+.$

If $\widehat V$ is $\alpha$-analytic, then $\widehat V$ is $\alpha$-arithmetic.
\end{prop}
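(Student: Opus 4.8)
The plan is to reduce the statement to a place-by-place estimate on the operator norms $\|\varphi^k_{D,\widehat V}\|_v$, and then to bound each such norm using the defining data of an $\alpha$-analytic formal subscheme, namely an element $f\in G_{\mathrm{an},\bar\alpha}$ (for any $\bar\alpha>\alpha$) straightening $\widehat V$ to the linear subspace $\widehat{\bA}^d$ inside $\widehat{\bA}^n_K$. By the theorem characterizing $\alpha(X,\widehat V,P)$ and its invariance under closed immersions and étale morphisms, we may choose, after fixing a suitable model over $\Spec\fko_K[1/N]$, an étale chart around $P$ in which $\widehat V$ is cut out by $d$ of the coordinate functions, up to the coordinate change $f$; the coefficients of $f$ satisfy, for all $\fkp$ outside a finite set $S$, the bound $\|f_{i,I}\|_\fkp\le C_\fkp^{|I|}/\|I!\|_\fkp^{\bar\alpha}$ with $\prod_{\fkp\notin S}C_\fkp<\infty$. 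Enlarging $S$ if necessary, we may also assume the model of $L$ and the sections used to define the integral structure $\mathscr E_D$ are well-behaved at every $\fkp\notin S$.

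The core computation is then, for a fixed finite place $\fkp\notin S$, to estimate $h_\fkp(\varphi^k_{D,\widehat V})=\log\|\varphi^k_{D,\widehat V}\|_\fkp$. A section $s\in E^k_D$ vanishing to order $k$ along $\widehat V$ is sent to its degree-$k$ part along $\widehat V$; expressing this in the étale coordinates and composing with $f$, the $(k+1)$-st Taylor coefficient of $s\circ f$ restricted to $\widehat{\bA}^d$ is a polynomial combination of the coefficients of $s$ (which are $\fkp$-integral up to a factor $C_\fkp^D$ coming from the model of $L$) and of the coefficients $f_{i,I}$ with $|I|\le k$. Combinatorially, the number and shape of the monomials involved is controlled by $k$ and $D$ only, so the only genuinely unbounded contribution as $k\to\infty$ is the factor $\|k!\|_\fkp^{-\bar\alpha}$ (and the harmless geometric factors $C_\fkp^{k+D}$). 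This gives, at a finite place $\fkp\notin S$,
\[
h_\fkp(\varphi^k_{D,\widehat V})\le \bar\alpha\log\frac{1}{\|k!\|_\fkp}+(k+D)\log C_\fkp .
\]
Summing over $\fkp\notin S$ and using $\prod_{\fkp\notin S}C_\fkp<\infty$, the second term contributes at most $C(k+D)$ for a constant $C$, while $\sum_{\fkp}\log\frac{1}{\|k!\|_\fkp}\le[K:\bQ]\log k!\le[K:\bQ]\,k\log k$ by the product formula (the Archimedean contribution of $k!$ being nonnegative). Dividing by $[K:\bQ]$ yields exactly the bound $\bar\alpha\,k\log k+C(k+D)$ required in Inequality~\eqref{S,alpha arithmetique}. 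For the finitely many places $v\in S$ (including all Archimedean ones), the convergence of $f_i$ at $v$ together with boundedness of $X$ gives a crude bound $h_v(\varphi^k_{D,\widehat V})\le C_v(k+D)$; since the claim of being $\alpha$-arithmetic allows any finite exceptional set $S$ and is independent of the choice of model and line bundle by the preceding propositions, and, by Lemma~\ref{alpha-arithmetique independant du corps}, insensitive to the base field, this finite set of places causes no trouble.

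The main obstacle is the combinatorial bookkeeping in the core estimate: one must verify carefully that, when one writes the $(k+1)$-st coefficient of $s\circ f$ in terms of the $f_{i,I}$'s via the Faà di Bruno / multivariate chain rule, the resulting integrality loss at $\fkp$ is controlled purely by $\|k!\|_\fkp^{-\bar\alpha}$ and a geometric factor $C_\fkp^{|I|}$, with no hidden denominators of size growing faster than $k!$ and no contribution from $D$ beyond $C_\fkp^D$. This is where the precise normalization in the definition of $G_{\mathrm{an},a}$ (the $\|I!\|_\fkp$ in the denominator) is used in an essential way, and where one must also invoke Lemma~\ref{polygone de Newton} to know that $f$ genuinely converges on a $\fkp$-disk of the stated radius so that the "Taylor coefficient" picture makes sense $\fkp$-adically. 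Once this local estimate is in place, assembling it into Definition~\ref{def S,alpha-arithmetique} is routine, using the product formula exactly as above.
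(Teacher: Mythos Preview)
Your approach is essentially the same as the paper's. Both arguments proceed by (i) choosing, for each $\bar\alpha>\alpha$, a parametrization of $\widehat V$ whose coefficients satisfy the bound $|a_I(i)|_\fkp\le C_\fkp^{|I|}/|I!|_\fkp^{\bar\alpha}$ at almost all finite places; (ii) bounding each local height $h_\fkp(\varphi^k_{D,\widehat V})$ by $-\bar\alpha\log|k!|_\fkp$ plus a term linear in $k+D$, using that multinomial coefficients are integers so that $\prod_j|I_j!|_\fkp^{-1}\le|k!|_\fkp^{-1}$ whenever $\sum_j|I_j|=k$; (iii) summing over finite places via the product formula to convert $-\sum_\fkp\log|k!|_\fkp$ into $[K:\bQ]\log k!\le[K:\bQ]\,k\log k$; and (iv) treating the finitely many remaining places (including the Archimedean ones) with the crude linear bound $h_v(\varphi^k_{D,\widehat V})\le C_v(k+D)$ coming from the positive radius of convergence of the parametrization.

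Two minor remarks. First, the paper encapsulates your ``Faà di Bruno bookkeeping'' in the single stated inequality~\eqref{maj hauteur en fonction coef parametrage}, and the ultrametric inequality together with the integrality of multinomial coefficients disposes of any worry about hidden denominators; there is no extra obstacle here beyond what you already sketched. Second, your invocation of Lemma~\ref{polygone de Newton} is not needed for the estimate at $\fkp\notin S$: the computation of the $k$-th Taylor coefficient of $s\circ f$ is purely formal, and no convergence radius enters. Convergence is used only at the finitely many places in $S$ to obtain the crude bound~$C_v(k+D)$, exactly as you say.
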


\begin{proof}
Let $d$ denote the dimension of~$\widehat V$, and~$\Specmax(\fko_K)$ be the maximal spectrum of~$\fko_K$. Assume that $\widehat V$ is $\alpha$-analytic and let $\ol\alpha>\alpha$. Then~$\widehat V$ is parameterized by formal series $f_1,\dots,f_n\in K[[x_1,\dots,x_d]]$, $f_i=\sum_I a_I(i)x^I,$ which, at each place, have a positive radius of convergence and satisfy: there is a finite subset~$S$ of places of~$K$ ~containing all Archimedean places such that, for all $\fkp\in\Specmax(\fko_K)\setminus S$, there exists $C_{\fkp}>0$ such that, for all $I\in\bZ_{\geqslant 0}^d$, for all $i\in\{1,\dots,n\}$, 
	\begin{equation}\label{coef parametrage a-analytique} |a_I(i)|_{\fkp}\leqslant\frac{C_{\fkp}^{|I|}}{\|I!\|_{\fkp}^{\ol\alpha}},\end{equation}
	and \[\prod_{\fkp\in\Specmax(\fko_K)\setminus S} C_{\fkp}<+\infty.\]
For every place~$v$ of~$K$, there is a non-negative real number~$C'_v$ , $C'_v=1$ for almost every~$v$, such that
	\begin{equation}\label{maj hauteur en fonction coef parametrage}
	h_{v}(\varphi^k_{D,\widehat V})\leqslant C_v'(k+D)+\log\left(\max_{1\leqslant i\leqslant n}\max_{|I|=k}|a_I(i)|_{v}\right).
	\end{equation}
To give an upper bound for $\varphi^k_{D,\widehat V}$, we use Inequality~\eqref{coef parametrage a-analytique} which gives an upper bound for the height at every place~$v\in\Sigma_K\setminus S$, and the analyticity of the series $f_1,\dots,f_d$ gives an easy upper bound for~$h_v(\varphi^k_{D,\widehat V})$ at the places~$v$ in the finite set~$S$.

Thanks to the inequalities~\eqref{maj hauteur en fonction coef parametrage} et~\eqref{coef parametrage a-analytique},
\[ h_{\fkp}(\varphi_{D,\widehat V}^k)\leqslant C'_{\fkp}{(k+D)}+ \log \max_{I\in\bZ_{\geqslant 0}^d,\ |I|\leqslant k}\frac{C_{\fkp}^{|I|}}{\|I!\|_{\fkp}^{\ol\alpha}}.\]
If $I=(i_1,...,i_d)\in\bZ_{\geqslant 0}^d$, then
$|I|!/i_1!...i_d!$ is an integer, and hence $\frac{1}{\| I ! \|_{\fkp}} \leqslant \frac 1{|k!|_{\fkp}}$ if $|I|\leqslant k$.
Then,
\[ \log \|\varphi_{D,\widehat V}^k\|_{\fkp}\leqslant C'_{\fkp}{(k+D)}+ k  \log C_{\fkp} - \ol\alpha\log |k!|_{\fkp}. \]
If $C=\log\prod_{\fkp\notin S} C_{\fkp}$ et $C'=\sum C'_{\fkp}$, we get
	\begin{align}
	\sum_{\fkp\in\Sigma_K\setminus S} \log\| \varphi^k_{D,\widehat V}\|_{\fkp}& \leqslant C'(k+D)+C k -\ol\alpha \sum_{\fkp\in\Sigma_K\setminus S} \log |k!|_{\fkp}\nonumber \\
	\label{maj p pas dans S} &\leqslant C''(k+D)-\ol\alpha \sum_{\fkp\in\Sigma_K\setminus S} \log |k!|_{\fkp}.
	\end{align}

Let~$v$ be a place of~ $K$. Let $r_{v}(i)$ be a positive real number, less than the radius of convergence of~$f_i$. Then $|a_I(i)|_{v}r_{v}(i)^{|I|}\rightarrow 0$ when $|I|$ goes to infinity, and therefore there is a a positive real number~$C_v$ ~such that
\begin{equation}\label{maj triviale}h_{v}(\varphi^k_{D,\widehat V})\leqslant C_v(k+D).\end{equation}
Setting $C_0=\sum_{v\in S}C_v$, finite sum of positive terms, we get
\begin{equation}\label{maj v dans S}
\sum_{v\in S}h_{v}(\varphi^k_{D,\widehat V})\leqslant C_0(k+D).
\end{equation}

From the inequalities~\eqref{maj p pas dans S} and~\eqref{maj v dans S} for the heights,
	\begin{align*}
	h(\varphi^k_{D,\widehat V})&=\sum_{v\in S}h_{v}(\varphi^k_{D,\widehat V})+\sum_{\fkp\notin S} h_{\fkp}(\varphi^k_{D,\widehat V})
	\leqslant C_1(k+D)-\ol\alpha \sum_{\fkp\in\Specmax\fko_K} \log |k!|_{\fkp}.
	\end{align*}
The product formula implies \[-\sum_{\fkp\in\Specmax\fko_K} \log |k!|_{\fkp}=[K:\bQ]\log (k!)\leqslant [K:\bQ]k\log k,\] and hence
	\[h(\varphi^k_{D,\widehat V})=\sum_{v\in\Sigma_K}h_v(\varphi^k_{D,\widehat V})\leqslant C_1(k+D)+\ol\alpha[K:\bQ]k\log k.\]

Let $S'$ be a finite subset of places of~$K$. Since for every $v\in S'$, the height~$\varphi^k_{D,\widehat V}$ of the evaluation morphism at the place~$v$ satisfies the simple inequality~\eqref{maj triviale}, we also get, by the same arguments,
\[\sum_{v\in\Sigma_K\setminus S'}h_v(\varphi^k_{D,\widehat V})\leqslant C_2(k+D)+ \ol\alpha [K:\bQ] k\log k,\]
and this holds for any $\ol\alpha>\alpha$.
Therefore the formal subscheme~$\widehat V$ is \mbox{$\alpha$-arithmetic}.
\end{proof}

The converse of Proposition~\ref{alpha analytique arithmetique} is false, we will give a counterexample at the end of next paragraph, page~\pageref{contre-exemple}.

\section{Formal germs of leaves of an algebraic foliation}\label{section feuilletage}


Let $X$ be a projective variety over a number field~$K$ and let $d$ be a positive integer. A (regular) algebraic foliation of dimension~$d$ on an open subset~$U$ of~$X$ is a $d$-dimensional subbundle of the tangent bundle $TU$ which is involutive, that is to say stable under Lie brackets.

In this paragraph, we will study the case of formal subschemes~$\widehat V$ which are germs of formal leaves of an algebraic foliation on~$X$ through a rational point.

\begin{lemme}\label{equa diff implique 1-analytique}
Let $X$ be a projective variety over a number field~$K$ and let~$P$ be a smooth $K$-rational point of~ $X$. Let $F$ be an algebraic foliation on an open subset of~$X$ containing~$P$ and let $\widehat V$ be the germ of formal leaf defined by~$F$ at~$P$.

Then $\widehat V$ is 1-analytic.
\end{lemme}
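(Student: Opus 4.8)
The plan is to reduce the statement to a classical estimate on the $\fkp$-adic growth of coefficients of solutions of algebraic differential systems. By the independence properties of the invariant $\alpha(X,\widehat V,P)$ established in the previous theorem (point~2 allows us to embed $X$ in a projective space and point~3 reduces to the affine setting via an étale chart at $P$), it suffices to treat the case $X=\bA^n_K$, $P=(0,\dots,0)$, and to exhibit a formal automorphism $f\in G_{\mathrm{an},a}$ for every $a>1$ — in fact for $a=1$ — with $f^*\widehat V=\bA^d$. Choosing local coordinates $(x_1,\dots,x_n)$ so that $\widehat V$ is the graph of formal functions $x_{d+1}=g_{d+1}(x_1,\dots,x_d),\dots,x_n=g_n(x_1,\dots,x_d)$ with $g_i(0)=0$, the foliation $F$ being involutive and algebraic means that the $g_i$ satisfy a system of algebraic partial differential equations with coefficients in $K(x_1,\dots,x_n)$, regular at $P$. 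After clearing denominators one gets, for the formal power series $g_i$, a relation expressing the partial derivatives $\partial g_i/\partial x_j$ as polynomials in the $g_\ell$ with coefficients in the localization of $\fko_K[x_1,\dots,x_d]$ at finitely many primes.

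The heart of the matter is then the standard $\fkp$-adic majoration for such systems: for almost all primes $\fkp$ of $\fko_K$ (all those not dividing the finitely many ``bad'' primes built into the coefficients of the differential system and into $Df(0)$), the coefficients $a_I$ of the $g_i$ satisfy $\|a_I\|_\fkp \le C_\fkp^{|I|}/\|I!\|_\fkp$ with $C_\fkp=1$, and $\prod_\fkp C_\fkp<\infty$. This is proved by induction on $|I|$: differentiating the algebraic relation $|I|$ times and using the multiplicative property of factorials together with the fact that the only denominators appearing in the recursion for the Taylor coefficients are the integers $i_1!\cdots i_d!$ dividing $|I|!$ (the ``Cauchy'' or ``Eisenstein'' lemma for solutions of ODE/PDE systems, as in Bost's work on algebraic leaves and in the classical Schneider–Lang setup). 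I would present this as: pick the model over $\Spec\fko_K[1/N]$ on which the foliation, the chart and $Df(0)^{-1}$ are all defined and integral; then for $\fkp\nmid N$ the recursion keeps all coefficients $\fkp$-integral up to the factorial denominators, giving exactly the defining inequality of $G_{\mathrm{an},1}$ with $C_\fkp=1$ outside the finite bad set $S$ (and the Archimedean/bad-place convergence is automatic since $g_i$ is the Taylor expansion of an honest analytic leaf of an analytic foliation).

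Taking $f=(g_1,\dots,g_d$-identity part$)$, more precisely the automorphism $(x_1,\dots,x_d,x_{d+1}-g_{d+1},\dots,x_n-g_n)$ straightening $\widehat V$ to $\bA^d$, one checks $f\in G_{\mathrm{an},1}$, whence $\alpha(\bA^n_K,\widehat V,0)\le 1$ and $\widehat V$ is $1$-analytic. The main obstacle is the $\fkp$-adic induction controlling factorial denominators uniformly in $\fkp$ — i.e.\ verifying that no new primes enter the denominators beyond those in the model; this is where involutivity (Frobenius) is implicitly used to guarantee the formal leaf exists and that the recursion for its coefficients closes up, and where one invokes the multinomial integrality $|I|!/(i_1!\cdots i_d!)\in\bZ$ exactly as in the proof of Proposition~\ref{alpha analytique arithmetique}. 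Everything else (the reductions via points~2 and~3, the trivial estimates at the finitely many bad and Archimedean places) is formal.
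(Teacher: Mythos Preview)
Your approach is correct and reaches the same conclusion, but via a different straightening automorphism. The paper picks a basis of \emph{commuting} vector fields $v^1,\dots,v^d$ for~$F$ and parameterizes the leaf by the formal flow
\[
\psi(t_1,\dots,t_d)=\sum_{I}\frac{t^I}{I!}\,D^I(x_1,\dots,x_n)(P),
\]
then sets $f(t)=\psi(t_1,\dots,t_d,0)+(0,\dots,0,t_{d+1},\dots,t_n)$. The key estimate $\bigl|\tfrac{1}{I!}D^IX(P)\bigr|_{\fkp}\le |I!|_{\fkp}^{-1}$ is then a one-line consequence of the fact that each $D_j$ preserves $\fko_{K,\fkp}[[x_1,\dots,x_n]]$ once a model over $\fko_K[1/N]$ is fixed. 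Your graph representation and Cauchy--Eisenstein induction is the same phenomenon unpacked: the iterated derivatives $\partial^I g_j(0)$ lie in $\fko_K[1/N]$ because the recursion coming from $\partial_{x_\ell}g_j=v^\ell_j(x,g(x))$ (after normalizing the first $d\times d$ block of the frame) involves only integer multinomial coefficients and the $\fkp$-integral Taylor data of the $v^\ell_j$. The paper's flow formulation buys brevity and is exactly what is reused in the next lemmas on $\fkp$-curvatures, where one must track $D^p$; your graph formulation avoids invoking a commuting frame but requires the extra linear change of the $v^\ell$ and the inductive argument you sketch. One small point of bookkeeping: your map $h=(x_1,\dots,x_d,x_{d+1}-g_{d+1},\dots,x_n-g_n)$ satisfies $h(\widehat V)=\bA^d$, so in the paper's convention it is $f=h^{-1}$ that has $f^*\widehat V=\bA^d$; since $h^{-1}$ has the same coefficients as $h$ up to sign, this does not affect membership in $G_{{\rm an},1}$.
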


\begin{proof}
Let us recall the definition and some properties of formal leaves of an algebraic foliation and of their parametrization, following~\citep{bost_algebraic-leaves}. There is an open subset~$U$ of~$X$ containing~$P$ and regular functions $x_1,\dots,x_n$ on~$U$ such that the map $(x_1,\dots,x_n):U\rightarrow \bA_K^n$ is étale and maps~$P$ to~$0$. We identify~$\widehat X_P$ with $\hat \bA^n_{K,0}=\Specf K[[x_1,\dots,x_n]]$ via the ``local coordinates''~$\hat x_j$. Let $(v^1,\dots,v^d)$ be a basis of~$F$ on an open neighborhood~$V$ of~$P$ of commuting vector fields $v^j\in\fko_K[[x_1,\dots,x_n]]^n$. In the complex analytic case, the proof of this result is a classic one, see for instance the appendix of~\citep{camacho_neto_foliations}, and is quite similar in the formal case (see~\citep{herblot_these}). For all $j\in\{1,\dots,d\}$, let $D_j$ be the derivation on $K[[x_1,\dots,x_n]]$ associated with $v^j$, and for every multi-index $I=(i_1,\dots i_d)\in\bZ_{\geqslant 0}^d$ let $D^I$ denote the differential operator $D^{i_1}_1\dots D^{i_d}_d$ and $I!=\prod_{j=1}^d i_j!$.

The formal leaf $\widehat V$ of~$F$ through~$P$ is parameterized by $\psi:\hat\bA^d_{K,0}\times\widehat X_P\longrightarrow\widehat X_P$ defined as
\begin{equation}
\psi(t_1,\dots,t_d,0,\dots,0)=\sum_{I\in\bZ_{\geqslant 0}^d}\frac{t^I}{I!}D^I(x_1,\dots,x_n)(P) \label{flot formel},
\end{equation}
where $t^I=\prod_{j=1}^d t_{j}^{i_j}$.

Let~$f$ be the morphism $f:\hat\bA^n_{K,0}\rightarrow\widehat X_P$ which is given by \begin{equation}\label{redressement V chapeau}
f(t_1,\dots,t_n)=\psi(t_1,\dots,t_d,0,\dots,0)+(0,\dots,0,t_{d+1},\dots,t_{n}),
\end{equation}
in terms of the local coordinates $x_1,\dots,x_n$ on $\widehat X_P$. lt satisfies $f^{-1}\widehat V=\hat \bA^d_{K,0}\times\{0\}^{n-d}.$ To show that the formal subscheme~$\widehat V$ is $1$-analytic, it is sufficient to prove that $f\in G_{{\rm an},1}$. To do so, it is sufficient to give an upper bound for the coefficients of the parametrization $\psi(t_1,\dots,t_d,0,\dots,0).$

For all $I\in\bZ_{\geqslant 0}^d$, 
\[D^I:\mathfrak o_K[[x_1,\dots,x_n]]\rightarrow\mathfrak o_K[[x_1,\dots,x_n]].\]

Hence we have 
	\begin{equation}\label{majoration sans condition pc}
	\left|\frac{1}{I!}D^I(x_1,\dots,x_n)(P)\right|_{\fkp}=|I!|_{\fkp}^{-1}|D^I(x_1,\dots,x_n)(P)|_{\fkp}\leqslant |I!|_{\fkp}^{-1},\end{equation}
which proves that $\widehat V$ is 1-analytic.
\end{proof}

\begin{remarque}
This inequality fulfilled by the coefficients of a parametrization of~$\widehat V$ is stronger than the condition of $1$-analyticity. Actually, for almost every place~$\fkp$ those coefficients are less or equal to~$\frac{C_{\fkp}^{|I|}}{|I!|_{\fkp}}$ with $C_{\fkp}=1$.
\end{remarque}

If $D$ is a derivation on a commutative ring~$A$ of positive characteristic~$p$, then from the Leibniz rule its $p$-th  composite $D^p$ is still a derivation on~$A$.
Let~$X$ be a projective variety over a number field~$K$. Let~$F$ be an algebraic foliation on a smooth open subset~$U$ of~$X$, defined over~$K$. Let $N$ be a positive integer such that there exist a smooth model~$\mathscr U$ of~
$U$ over~$\fko_K[1/N]$ and an involutive subbundle~$\mathscr F$ of~$T\mathscr U$
with generic fiber~$F$.
Let $\fkp$ be a maximal ideal of $\fko_K[1/N]$, denote by $\mathbf F_{\fkp}$
its residue field and by~$p$ the characteristic of $\mathbf F_{\fkp}$.
We say that $\mathscr F$ has \emph{vanishing $\fkp$-curvature}
if the subbundle $\mathscr F\otimes \mathbf F_{\fkp}$
of $T(\mathscr U\otimes\mathbf F_{\fkp})$ is stable under its $p$-th power.

For almost every~$\fkp$, this notion does not depend of the choices of~$\mathscr U$
and~$\mathscr F$ we made.
Hence we will say in that case that $F$ has \emph{vanishing $\fkp$-curvature}. (For more details about involutive vector bundles in positive characteristic, see~\citep{miyaoka_foliation}.)

\begin{lemme}
Let $X$ be a projective variety defined over a number field~$K$, and let $P$ be a smooth $K$-rational point of~$X$. Let $F$ be an algebraic foliation defined on an open subset of~$X$ containing~$P$ and let $\widehat V$ be the germ of formal leaf of~$F$ through~$P$. Assume that almost every $\fkp$-curvature (that is to say all but finitely many) of~$F$ vanishes.

Then $\widehat V$ is 0-analytic.
\end{lemme}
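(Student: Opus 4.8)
The statement to prove is that when almost every $\fkp$-curvature of $F$ vanishes, the germ of leaf $\widehat V$ at $P$ is $0$-analytic, i.e.\ $\alpha(X,\widehat V,P)=0$. By the arguments already in place (the étale-local nature of $\alpha$ established in the theorem characterizing $\alpha(X,\widehat V,P)$, and the construction in Lemma~\ref{equa diff implique 1-analytique}), it suffices to work in the local coordinates $x_1,\dots,x_n$ on $U$, with the commuting vector fields $v^1,\dots,v^d$ defined over $\fko_K[1/N]$ for a suitable $N$, and to bound the $\fkp$-adic size of the Taylor coefficients $\frac{1}{I!}D^I(x_1,\dots,x_n)(P)$ of the flow parametrization $\psi$ from \eqref{flot formel}. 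Concretely, I want to show: for all but finitely many $\fkp$, and for all multi-indices $I$,
\[\left|\frac{1}{I!}D^I(x_1,\dots,x_n)(P)\right|_{\fkp}\leqslant 1,\]
which (with $C_\fkp=1$) exhibits $f$ as an element of $G_{{\rm an},0}$, hence $\widehat V$ as $0$-analytic.

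**Key steps.** First I would reduce to a single derivation: by commutativity, $D^I = D_1^{i_1}\cdots D_d^{i_d}$, and the divided-power estimate will be obtained one variable at a time, so it is enough to control $\frac{1}{i!}D^i(g)(P)$ for $g\in\fko_K[1/N][[x_1,\dots,x_n]]$ and a single $D=D_j$ whose $\fkp$-curvature vanishes. Second, the crucial input: vanishing $\fkp$-curvature means $D^p\equiv 0$ as a derivation on $\mathscr U\otimes\mathbf F_\fkp$; more precisely, $D^p$ reduces mod $\fkp$ to a derivation that lies in $\mathscr F\otimes\mathbf F_\fkp$, and — this is the point — after a suitable normalization (choosing $v^j$ so that the $v^j$ form a $\mathbf F_\fkp$-basis of $\mathscr F\otimes\mathbf F_\fkp$ that is $p$-closed) one gets $D_j^p \equiv \sum_k c_k D_k \pmod{\fkp}$ with $c_k\in\fko_{K,\fkp}$. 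The standard consequence (this is exactly the mechanism in Bost's work on algebraic leaves and in Honda/Dwork-type arguments) is that the divided powers $\frac{1}{i!}D^i$ still map $\fko_{K,\fkp}[[x]]$ into itself: writing $i = a_0 + a_1 p + a_2 p^2 + \cdots$ in base $p$, Legendre's formula gives $v_p(i!) = \frac{i - s_p(i)}{p-1}$, and the $p$-curvature condition lets one rewrite $D^i$ as a product of $D^{a_\ell p^\ell}$, each of which is divisible by $p^{?}$ enough to compensate $\frac{1}{i!}$. I would cite the relevant statement (e.g.\ from \citep{bost_algebraic-leaves}, or Chambert-Loir's survey) rather than reprove it, then combine the $d$ one-variable estimates multiplicatively, using that $I!/(i_1!\cdots i_d!)$ is an integer so no extra denominators appear.

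**The main obstacle.** The delicate point is the passage from "$\fkp$-curvature vanishes" (a statement about the reduction mod $\fkp$ of the foliation, invariantly defined) to the *integral* statement "$\frac{1}{I!}D^I$ preserves $\fko_{K,\fkp}[[x]]$" — this requires choosing the basis $v^1,\dots,v^d$ of $\mathscr F$ compatibly with the $p$-closed structure, and checking that the finitely many $\fkp$ for which the chosen model or the chosen basis misbehaves can be absorbed into the exceptional set $S$ (which is allowed, since $0$-analyticity only constrains almost all places, and the remaining finitely many places each contribute a finite $C_\fkp$ via convergence of $\psi$). The bookkeeping of base-$p$ digits in the iterated-derivation estimate is routine once the right statement is quoted, so I would keep that part brief and concentrate the exposition on why vanishing $\fkp$-curvature gives the integrality of divided powers; I expect the author does exactly this, likely invoking \citep{bost_algebraic-leaves}.
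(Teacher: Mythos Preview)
Your overall strategy is right --- work in the \'etale local coordinates of Lemma~\ref{equa diff implique 1-analytique}, use the flow parametrization $\psi$ from \eqref{flot formel}--\eqref{redressement V chapeau}, and bound $\bigl|\tfrac{1}{I!}D^I(x_1,\dots,x_n)(P)\bigr|_{\fkp}$ place by place --- but the target bound you aim for, namely $C_\fkp=1$ (integrality of the divided powers $\tfrac{1}{i!}D^i$), is too strong and in fact false. Vanishing $\fkp$-curvature lets you extract from $D^I$ a factor of $\upvarpi^{q}$ with $q=\sum_j\lfloor i_j/p\rfloor$, whereas Legendre's formula gives $v_p(I!)=\sum_j\tfrac{i_j-s_p(i_j)}{p-1}$; the deficit $v_p(I!)-q$ is \emph{not} $\leqslant 0$ but only $\leqslant \tfrac{|I|}{p(p-1)}$. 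A one-line check: for the rank-one foliation generated by $D=(1+x)\partial_x$ one has $D^p\equiv D\pmod p$, so the $\fkp$-curvature vanishes for every $p$, yet the flow through $0$ is $e^t-1$, whose $n$-th Taylor coefficient $1/n!$ is not $p$-integral. The statement you hope to cite from \citep{bost_algebraic-leaves} or from Honda/Dwork-type arguments is not there; those references prove exactly the $\tfrac{1}{p(p-1)}$-type bound, not integrality.

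What the paper actually does is accept the weaker estimate
\[
\left|\tfrac{1}{I!}D^I(x_1,\dots,x_n)(P)\right|_{\fkp}\leqslant C_\fkp^{|I|},\qquad C_\fkp=p^{\,[K_\fkp:\bQ_p]/(p(p-1))},
\]
for unramified~$\fkp$, and then makes the crucial observation, absent from your plan, that $\sum_p\tfrac{\log p}{p(p-1)}<\infty$, so $\prod_\fkp C_\fkp<\infty$ and $f\in G_{\mathrm{an},0}$. If you carry your base-$p$ digit bookkeeping through honestly you will recover precisely this $\tfrac{1}{p(p-1)}$ exponent; the missing piece in your argument is recognizing that $0$-analyticity allows $C_\fkp>1$ provided the product converges, and that this convergence, not integrality, is what closes the proof.
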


\begin{proof}
Let $\fkp$ be a maximal ideal of $\fko_K$ and $p$ be the prime number such that $(p)=\fkp\cap\bZ$. Let $f$ be the residue class degree, that is to say the degree of the field extension $\fko_K/\fkp$ over $\bF_p$, so that $\fko_K/\fkp=\bF_{p^f}.$ Let $\fko_{\fkp}$ be the completion of $\fko_K$ for the $\fkp$-adic absolute value. Let $\upvarpi$ be a  uniformizing element of~$\fko_{\fkp}$, $(p)=(\upvarpi)^e$ where $e$ is the absolute ramification index.

Let $I=(i_1,\dots,i_d)\in\bZ_{\geqslant 0}^d$. The derivation $D^I=D_1^{i_1}\dots D_d^{i_d}$ acts on $\fko_{\fkp}[[x_1,\dots,x_n]]^n$. Moreover, if the $\fkp$-curvature vanishes, $D_i^p$ maps $\fko_{\fkp}[[x_1,\dots,x_n]]^n$ into $\upvarpi\fko_{\fkp}[[x_1,\dots,x_n]]^n.$
Let $g\in\fko_{\fkp}[[x_1,\dots,x_n]]^n$. For all $j\in\{1,\dots,d\}$ we write $i_j=q_jp+r_j$ the Euclidean Division of $i_j$ by $p$ and set $q=\sum_{j=1}^d q_j$. Then

\begin{align*}
D^I(g)&=D_1^{q_1p+r_1}\dots D_d^{q_dp+r_d}(g)= {(D_1^{p})}^{q_1} D_1^{r_1}\dots {(D_d^{p})}^{q_d} (D_d^{r_d}(g)),
\end{align*}
and finally $D^I(g)\in\upvarpi^{q}\fko_{\fkp}[[x_1,\dots,x_n]]^n.$

The germ of formal leaf defined by the foliation at~$P$ is parameterized by
	\[\psi(t_1,\dots,t_d,x_1(P),\dots,x_n(P))=\sum_{I\in\bZ_{\geqslant 0}^d}\frac{t^I}{I!}D^IX(P),\]
with $X=(x_1,\dots,x_n)$.	

Now, we give an upper bound for the coefficients of the parametrization~$\psi$:

\begin{align}
\left|\frac{1}{I!}D^IX(0)\right|_{\fkp}&\leqslant |I!|^{-1}_{\fkp}|\upvarpi|_{\fkp}^{q}
\leqslant p^{[K_{\fkp}:\bQ_p]\left(\sum_{j=1}^d v_p(i_j!) -\frac{1}{e}\lfloor\frac{i_j}{p}\rfloor\right)}\label{coef parametrage}.
\end{align}

Recall the normalization of the $\fkp$-adic absolute value we chose. Let $\upvarpi$ be a  uniformizing element at $\fkp$, we have: 
\[|p|_{\fkp}=N(\fkp)^{-e}=p^{-[K_{\fkp}:\bQ_p]},\]
\[|\upvarpi|_{\fkp}=N(\fkp)^{-1}=p^{-f}=p^{-\frac{[K_{\fkp}:\bQ_p]}{e}}.\]

%
We recall that if $a$ is a non-negative integer, then $v_p(a!)-\left\lfloor\frac{a}{p}\right\rfloor\leqslant \frac{a}{p(p-1)}$, because the $p$-adic valuation of $a!$ is $v_p(a!)=\sum_{k=1}^{\infty} \left\lfloor\frac{a}{p^k}\right\rfloor$. If $e=1,$ this remark and the inequality~\eqref{coef parametrage} imply the following inequality:
\begin{align}
\left|\frac{1}{I!}D^IX(0)\right|_{\fkp}\leqslant p^{[K_{\fkp}:\bQ_p]\sum_{j=1}^d \frac{i_j}{p(p-1)}}
\leqslant p^{[K_{\fkp}:\bQ_p]\frac{|I|}{p(p-1)}}
=|p|_{\fkp}^{-\frac{|I|}{p(p-1)}}
\label{majoration coef parametrage pc=0}.
\end{align}

Let $S$ be the finite subset of the ramified ideals $\fkp$. For all $\fkp\notin S$, the ramification index~$e$ is equal to~1 and Inequality~\eqref{majoration coef parametrage pc=0} holds.
For $\fkp\notin S,$ we set 
	\[C_{\fkp}=p^{\frac{[K_{\fkp}:\bQ_p]}{p(p-1)}}.\]
The sum $\sum_{\fkp\notin S}\log C_{\fkp}=\sum\frac{\log p}{p(p-1)}[K_{\fkp}:\bQ_p]$ converges and 
\[\left|\frac{1}{I!}D^IX(0)\right|_{\fkp}\leqslant C_{\fkp}^{|I|},\]
so $\widehat V$ is $0$-analytic.
\end{proof}

\subsection{Density of vanishing $\fkp$-curvatures} \label{densite p-courbures nulles}

In this paragraph, we define a density $\beta$ in~$[0,1]$ of vanishing $\fkp$-curvatures of the foliation~$F$, which is equal to~1 without any additional assumption on the foliation and~0 if almost every $\fkp$-curvature vanishes. This density is related to the notion of $\alpha$-arithmetic formal subscheme: a formal leaf with a density~$\beta$  of vanishing $\fkp$-curvatures is $(1-\beta)$-arithmetic (Proposition~\ref{densite pc nulle alpha arithmetique}). With this definition, Theorem~\ref{TH SL DEGRES} gives, in the Archimedean case, a kind of ``interpolation'' between the classic Schneider-Lang theorem (when the density of vanishing $\fkp$-curvatures is zero) and (when almost every $\fkp$-curvature vanishes) an algebraicity theorem close to the theorem by \mbox{J.-B.}~Bost in his article~\citep{bost_algebraic-leaves}.

\begin{lemme} Assume that~$\widehat V$ is the germ of formal leaf of an algebraic foliation at a rational point. Then the evaluation morphism satisfies the following properties:
there exists a finite set $S$ of maximal ideals of~$\fko_K$ such that, for every $\fkp\in\Specmax \fko_K\setminus S$, for every $k\in\bZ_{\geqslant 0}$ and $D\in\bZ_{>0}$,
	\begin{equation}\label{morphisme d'evaluation si feuilletage}
	h_{\fkp}(\varphi_{D,\widehat V}^k)\leqslant k\frac{[K_{\fkp}:\bQ_p]\log p}{p-1},\end{equation}
and if moreover $k<p,$ we have:
	\begin{equation}\label{p<k}
	h_{\fkp}(\varphi_{D,\widehat V}^k)\leqslant 0.\end{equation}
Let $\fkp$ be a maximal ideal of~$\fko_K$, $\fkp\notin S$, such that the $\fkp$-curvature of~$F$ vanishes. Then
	\begin{equation}\label{morphisme evaluation pc=0}
	h_{\fkp}(\varphi_{D,\widehat V}^k)\leqslant k\frac{[K_{\fkp}:\bQ_p]\log p}{p(p-1)}.\end{equation}
\end{lemme}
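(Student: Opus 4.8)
The plan is to estimate the $\fkp$-adic operator norm of $\varphi^k_{D,\widehat V}$ directly from the explicit parametrization of the formal leaf recalled in the proof of Lemma~\ref{equa diff implique 1-analytique}. Recall that on a suitable $\fkp$-integral étale chart the leaf $\widehat V$ is cut out inside $\widehat X_P$ by $f^{-1}\widehat V=\hat\bA^d_{K,0}\times\{0\}^{n-d}$, where $f$ is built from the formal flow $\psi(t_1,\dots,t_d,0,\dots,0)=\sum_{I\in\bZ_{\geqslant 0}^d}\frac{t^I}{I!}D^IX(P)$. A section $s\in E^k_{D,\widehat V}$ is, in these coordinates, a section of $L^D$ whose restriction to $\widehat V$ (pulled back along $\psi$) is a power series in $t_1,\dots,t_d$ with lowest-order terms in degree $k$; its image $\varphi^k_{D,\widehat V}(s)$ is essentially the degree-$k$ homogeneous part of this power series. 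So up to the contribution of trivializing $L^D$ over an integral model — which costs at most a factor $C_v^{k+D}$ with $C_v=1$ for almost all $v$ (this is what forces the finite exceptional set $S$) — the norm $\|\varphi^k_{D,\widehat V}\|_{\fkp}$ is bounded by the maximum over $|I|=k$ of $\bigl|\tfrac{1}{I!}D^IX(P)\bigr|_{\fkp}$.

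First I would fix a finite set $S$ of maximal ideals containing the ramified primes, the primes dividing the integer $N$ over which the étale chart and an integral trivialization of $\overline{\mathscr L}$ exist, and the primes of bad reduction, so that for $\fkp\notin S$ the ramification index is $1$, the coordinate vector fields $D_j$ preserve $\fko_{\fkp}[[x_1,\dots,x_n]]^n$, and $L^D$ is integrally trivialized near $P$ with $C_{\fkp}=1$. Then \eqref{majoration sans condition pc} gives $\bigl|\tfrac{1}{I!}D^IX(P)\bigr|_{\fkp}\le|I!|_{\fkp}^{-1}$, and since $|I|!/I!\in\bZ$ and $|I|=k$ we get $|I!|_{\fkp}^{-1}\le|k!|_{\fkp}^{-1}$. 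Hence $h_{\fkp}(\varphi^k_{D,\widehat V})\le -\log|k!|_{\fkp}=v_{\fkp}(k!)\log N(\fkp)=v_p(k!)[K_{\fkp}:\bQ_p]\log p$, and the elementary bound $v_p(k!)=\sum_{i\ge1}\lfloor k/p^i\rfloor\le k/(p-1)$ yields \eqref{morphisme d'evaluation si feuilletage}. For \eqref{p<k}, observe that if $k<p$ then $v_p(k!)=0$, so $|k!|_{\fkp}=1$ and the same computation gives $h_{\fkp}(\varphi^k_{D,\widehat V})\le 0$.

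For \eqref{morphisme evaluation pc=0}, I would instead invoke the sharper estimate \eqref{majoration coef parametrage pc=0} established in the vanishing-$\fkp$-curvature lemma: when the $\fkp$-curvature of $F$ vanishes and $\fkp\notin S$ (so $e=1$), one has $\bigl|\tfrac{1}{I!}D^IX(P)\bigr|_{\fkp}\le|p|_{\fkp}^{-|I|/(p(p-1))}=p^{[K_{\fkp}:\bQ_p]|I|/(p(p-1))}$ for every $I$, in particular for $|I|=k$. Plugging this into the operator-norm bound gives $h_{\fkp}(\varphi^k_{D,\widehat V})\le k\,\frac{[K_{\fkp}:\bQ_p]\log p}{p(p-1)}$, which is \eqref{morphisme evaluation pc=0}. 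The main obstacle is purely bookkeeping: one must argue carefully that passing from the power-series coefficient bounds to a bound on the \emph{operator norm} of $\varphi^k_{D,\widehat V}$ introduces no extra $\fkp$-adic loss for $\fkp\notin S$ — this is where one uses that $\overline{\mathscr L}^D$ admits an integral basis of sections near $\mathscr P$ with unit $\fkp$-norm, so that writing $s$ in such a basis and restricting along the integral flow $\psi$ keeps all coefficients $\fkp$-integral and the degree-$k$ part is controlled by the $\frac{1}{I!}D^IX(P)$ with $|I|=k$ exactly. Enlarging $S$ finitely many more times if necessary absorbs all remaining exceptional primes, and the three displayed inequalities follow.
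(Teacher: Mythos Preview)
Your proposal is correct and follows essentially the same approach as the paper: both derive the three inequalities directly from the coefficient bounds \eqref{majoration sans condition pc} and \eqref{majoration coef parametrage pc=0} on $\bigl|\tfrac{1}{I!}D^IX(P)\bigr|_{\fkp}$, combined with the multinomial observation $|I!|_{\fkp}^{-1}\le|k!|_{\fkp}^{-1}$ for $|I|=k$ and the elementary estimate $v_p(k!)\le k/(p-1)$. Your writeup simply makes explicit the bookkeeping step (that for $\fkp\notin S$ the passage from coefficient bounds to the operator norm of $\varphi^k_{D,\widehat V}$ introduces no extra $\fkp$-adic loss, cf.\ \eqref{maj hauteur en fonction coef parametrage} with $C'_{\fkp}=0$) which the paper's one-line proof leaves implicit.
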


\begin{proof}
The inequalities~\eqref{morphisme d'evaluation si feuilletage} and~\eqref{p<k} follow from Inequality~\eqref{majoration sans condition pc}. If moreover the $\fkp$-curvature vanishes, from Inequality~\eqref{majoration coef parametrage pc=0} we get~\eqref{morphisme evaluation pc=0}.
\end{proof}

For every~$x\in\bR_+^*$, we define 
	\[\psi_K(x)=\sum_{\substack{\fkp\in\Specmax\fko_K \\ \text{s.t. }  p\leqslant x}}[K_{\fkp}:\bQ_p]\frac{\log p}{p-1}.\]
	
\begin{lemme}\label{poids densite}
Let $K$ be a number field. Then, when $k$ goes to $+\infty$,
	\[\psi_K(k)\sim [K:\bQ]\log k.\]
\end{lemme}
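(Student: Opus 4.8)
The plan is to reduce the asymptotics of $\psi_K(k)$ to the classical prime-counting estimate $\sum_{p\leqslant x}\frac{\log p}{p-1}\sim\log x$ over $\bQ$, by splitting the sum according to the rational prime $p$ lying under $\fkp$ and using the fundamental identity $\sum_{\fkp\mid p}[K_\fkp:\bQ_p]=[K:\bQ]$ (summing local degrees of all places above $p$).

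First I would write
\[
\psi_K(x)=\sum_{p\leqslant x}\frac{\log p}{p-1}\Biggl(\sum_{\substack{\fkp\in\Specmax\fko_K\\\fkp\mid p}}[K_\fkp:\bQ_p]\Biggr)
=[K:\bQ]\sum_{p\leqslant x}\frac{\log p}{p-1},
\]
where the inner sum runs over the (finitely many) primes of $\fko_K$ above $p$, and the last equality is the standard formula $\sum_{\fkp\mid p}e_\fkp f_\fkp=[K:\bQ]$ together with $e_\fkp f_\fkp=[K_\fkp:\bQ_p]$. Thus $\psi_K(x)=[K:\bQ]\,\psi_\bQ(x)$ exactly, and the problem is reduced to the case $K=\bQ$: it suffices to prove $\sum_{p\leqslant x}\frac{\log p}{p-1}\sim\log x$ as $x\to+\infty$.

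Next I would establish this rational-prime estimate. One clean route is to start from Mertens' theorem (or equivalently the prime number theorem, or even just Chebyshev-type bounds refined by partial summation), which gives $\sum_{p\leqslant x}\frac{\log p}{p}=\log x+O(1)$. Then I would bound the difference
\[
0\leqslant\sum_{p\leqslant x}\frac{\log p}{p-1}-\sum_{p\leqslant x}\frac{\log p}{p}=\sum_{p\leqslant x}\frac{\log p}{p(p-1)}\leqslant\sum_{p}\frac{\log p}{p(p-1)}<\infty,
\]
the last series converging since $\frac{\log p}{p(p-1)}=O(p^{-3/2})$. Hence $\sum_{p\leqslant x}\frac{\log p}{p-1}=\log x+O(1)\sim\log x$, and multiplying by $[K:\bQ]$ gives $\psi_K(k)\sim[K:\bQ]\log k$.

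The only real input is the classical estimate $\sum_{p\leqslant x}\frac{\log p}{p}=\log x+O(1)$, which is Mertens' first theorem and is entirely standard; everything else is elementary bookkeeping. So there is no genuine obstacle here — the lemma is essentially a repackaging of Mertens' theorem through the degree formula for primes in a number field — but if one wanted to avoid quoting Mertens, the mildly delicate point would be deriving $\sum_{p\leqslant x}\frac{\log p}{p}=\log x+O(1)$ from Chebyshev's bound $\vartheta(x)=O(x)$ via Abel summation, which is routine but should be done with a little care about the error terms.
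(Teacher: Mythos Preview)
Your proof is correct and follows exactly the same approach as the paper: reduce to $\bQ$ via the identity $\psi_K=[K:\bQ]\,\psi_{\bQ}$ (from the local degree formula $\sum_{\fkp\mid p}[K_\fkp:\bQ_p]=[K:\bQ]$), then invoke the classical estimate $\psi_{\bQ}(k)\sim\log k$. The paper simply cites Tenenbaum for this last step, whereas you sketch it from Mertens' theorem, which is a welcome elaboration but not a different method.
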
	

\begin{proof}
The function $\psi_K$ satisfies $\psi_K=[K:\bQ]\psi_{\bQ}$, and it is well-known that $\psi_{\bQ}(k)\sim\log k$ when $k$ goes to~$\infty$ (see for instance~\citep{tenenbaum} Chapter~I.1, Theorem~7).
\end{proof}	
	
	\begin{definition}\label{def densite}
Let $F$ be an algebraic foliation on~$X$. For every $x\in\bR_+$, we set
	\[\beta_x=\frac{1}{\psi_K(x)}\sum_{\substack{\fkp \text{ s.t. }  p\leqslant x,\\ \text{$\fkp$-curvature(F)}=0}}\frac{[K_{\fkp}:\bQ_p]\log p}{p-1}.\]
We call \emph{(inferior) density of vanishing $\fkp$-curvatures} the following real number between~0 et~1:
\begin{equation}\label{densite p-courbures}
\beta=\varliminf_{x\rightarrow\infty}\beta_x.
\end{equation}
\end{definition}

\begin{prop}\label{densite pc nulle alpha arithmetique}Let~$X$ be a projective variety defined over a number field~$K$ and let~$P$ be a smooth $K$-rational point of~$X$. Let $F$ be an algebraic foliation defined on an open subset of~$X$ containing~$P$ and let $\widehat V$ be the germ of formal leaf of~$F$ through~$P$.

Let $\beta$ be the density of vanishing $\fkp$-curvatures of~$F$. Then $\widehat V$ is $(1-\beta)$-arithmetic.
\end{prop}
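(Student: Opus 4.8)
The plan is to derive the desired $\alpha$-arithmeticity (with $\alpha = 1-\beta$) directly from the place-by-place estimates on the heights of the evaluation morphisms collected in the preceding lemma, combined with the asymptotics for the weighting function $\psi_K$. I would first fix a real number $\ol\alpha > 1-\beta$ and, using Definition~\ref{def densite}, choose $x_0$ large enough that $\beta_x \geqslant 1-\ol\alpha$ for all $x \geqslant x_0$; equivalently, for $k \geqslant x_0$ the sum of the local weights $\frac{[K_{\fkp}:\bQ_p]\log p}{p-1}$ over the primes with $p \leqslant k$ at which the $\fkp$-curvature does \emph{not} vanish is at most $\ol\alpha\,\psi_K(k)$. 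This is the quantitative heart of the argument: it translates ``density $\beta$ of vanishing curvatures'' into a usable bound on the ``bad'' primes.

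Next I would estimate $\sum_{\fkp \notin S} h_{\fkp}(\varphi^k_{D,\widehat V})$. By \eqref{p<k} only primes with $p \leqslant k$ contribute. Split these into the primes with vanishing $\fkp$-curvature, where \eqref{morphisme evaluation pc=0} gives the contribution $k\frac{[K_{\fkp}:\bQ_p]\log p}{p(p-1)}$ — which is a tiny fraction of the corresponding weight and whose total over all such primes is $O(k)$ by the convergence of $\sum_p \frac{\log p}{p(p-1)}$ — and the primes with non-vanishing $\fkp$-curvature, where \eqref{morphisme d'evaluation si feuilletage} gives $k\frac{[K_{\fkp}:\bQ_p]\log p}{p-1}$. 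Summing the latter and using the bound from the previous paragraph yields
\[
\sum_{\substack{\fkp \notin S \\ \text{$\fkp$-curv}(F)\neq 0}} h_{\fkp}(\varphi^k_{D,\widehat V}) \leqslant k\,\ol\alpha\,\psi_K(k) \sim \ol\alpha\,[K:\bQ]\,k\log k
\]
by Lemma~\ref{poids densite}. Hence, for any $\ol\alpha' > \ol\alpha$ (so in particular any $\ol\alpha' > 1-\beta$ after relabeling) and $k$ large, $\frac{1}{[K:\bQ]}\sum_{\fkp\notin S} h_{\fkp}(\varphi^k_{D,\widehat V}) \leqslant \ol\alpha' k\log k + C(k+D)$, with the $C(k+D)$ term absorbing the $O(k)$ contribution of the good primes, the finitely many small $k$, and the Archimedean places. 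The Archimedean (and the finitely many omitted) places are controlled crudely by the parametrization of the leaf, exactly as in the proof of Proposition~\ref{alpha analytique arithmetique}: each such $h_v(\varphi^k_{D,\widehat V}) \leqslant C_v(k+D)$, giving the per-place bound required by Definition~\ref{def S,alpha-arithmetique}. Since $S$ was an arbitrary finite set of places and $\ol\alpha'$ an arbitrary real $> 1-\beta$, we conclude $\widehat V$ is $(1-\beta)$-arithmetic.

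The main obstacle is purely bookkeeping: one must be careful that the weights appearing in $\psi_K$ are precisely the weights $\frac{[K_{\fkp}:\bQ_p]\log p}{p-1}$ that bound $h_{\fkp}(\varphi^k_{D,\widehat V})/k$ in \eqref{morphisme d'evaluation si feuilletage}, so that the definition of $\beta_x$ ``sees'' exactly the right quantity — this is why Definition~\ref{def densite} uses that particular weight. A secondary point requiring care is the order of quantifiers: the density is an $\varliminf$, so for the chosen $\ol\alpha$ one only gets $\beta_x \geqslant 1-\ol\alpha$ for $x$ beyond some threshold, and one must check that the finitely many primes below the threshold contribute only an $O(k\log k)$ (indeed $O(k\log(\text{const}))=O(k)$, or at worst a bounded number of terms each $O(k)$) error that is swallowed by the $C(k+D)$ slack. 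Given these checks, the estimate $\psi_K(k)\sim[K:\bQ]\log k$ from Lemma~\ref{poids densite} closes the argument, and no Schwarz-type analytic input is needed here — that enters only in the proof of the main theorem.
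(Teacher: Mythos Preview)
Your proposal is correct and follows essentially the same route as the paper's proof: split the finite places with $p\leqslant k$ according to whether the $\fkp$-curvature vanishes, use \eqref{morphisme evaluation pc=0} and the convergence of $\sum_p\frac{\log p}{p(p-1)}$ to absorb the ``good'' primes into the $C(k+D)$ term, identify the sum over the ``bad'' primes with $(1-\beta_k)\psi_K(k)$, and then invoke Lemma~\ref{poids densite}. The only cosmetic difference is that the paper obtains the per-place bound $h_v(\varphi^k_{D,\widehat V})\leqslant C_v(k+D)$ by citing the fact that $\widehat V$ is $1$-arithmetic (via Lemma~\ref{equa diff implique 1-analytique} and Proposition~\ref{alpha analytique arithmetique}), rather than by re-running the analytic estimate directly.
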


\begin{proof}
Let~$S$ be a finite subset of places of~$K$. We want to show that for every $\varepsilon>0$, there exists a non-negative real number~$C$ such that, for all $k,D$,
	\[\sum_{v\in \Sigma_K\setminus S}h_{v}(\varphi^k_{D,\widehat V}) \leqslant C(k+D)+(1-\beta+\varepsilon)k\log k.\]
Since the formal subscheme~$\widehat V$ is the germ of leaf of an algebraic foliation, it is $1$-arithmetic and hence, for every place~$v$ of~$K$, there is a non-negative real number~$C_v$ such that, for every $k\in\bZ_{\geqslant 0}$ and $D\in\bZ_{>0}$,
	\[h_{v}(\varphi^k_{D,\widehat V}) \leqslant C_v(k+D).\]
Let $A$ be the set of maximal ideals $\fkp$ of $\fko_K$ such that the $\fkp$-curvature of~$F$ vanishes. Then, setting $C_1=\sum_{v:K\hookrightarrow\bC}C_v$,
	\begin{align*}
	\sum_{v\in \Sigma_K\setminus S}h_{v}(\varphi^k_{D,\widehat V}) 
	&\leqslant  C_1(k+D)+\sum_{\substack{\fkp\in\Specmax\fko_k\\ p\leqslant k}} h_{\fkp}(\varphi^k_{D,\widehat V})  \qquad \text{ from~\eqref{p<k}}\\
	&\leqslant C_1(k+D)+\sum_{\substack{\fkp\in A\\ p\leqslant k}} h_{\fkp}(\varphi^k_{D,\widehat V})
	+\sum_{\substack{\fkp\in\Specmax\fko_k\setminus A\\ p\leqslant k}} h_{\fkp}(\varphi^k_{D,\widehat V})\\
	&\leqslant C_2(k+D)+k\sum_{\substack{\fkp\in A\\ p\leqslant k}}\frac{[K_{\fkp}:\bQ_p]\log p}{p(p-1)}
	+k\sum_{\substack{\fkp\notin A\\ p\leqslant k}} \frac{[K_{\fkp}:\bQ_p]\log p}{p-1}\\
	&\leqslant C_3(k+D)+k\sum_{\substack{\fkp\in\Specmax\fko_k\setminus A\\ p\leqslant k}} \frac{[K_{\fkp}:\bQ_p]\log p}{p-1},
	\end{align*}
because $\sum_{\fkp\in A}\frac{\log p}{p(p-1)}[K_{\fkp}:\bQ_p]<\infty$. From Definition~\ref{def densite},
	\begin{align*}
	\sum_{\substack{\fkp\in\Specmax\fko_k\setminus A\\ p\leqslant k}} \frac{[K_{\fkp}:\bQ_p]\log p}{p-1}&=(1-\beta_k)\psi_K(k).
	\end{align*}
Since $\beta=\varliminf_k\beta_k$, for every $\varepsilon>0$, there exists $k_0$ such that, for every~$k\geqslant k_0$,
$\beta_k\geqslant \beta-\varepsilon$. 
Moreover, thanks to Lemma~$\ref{poids densite}$, $\psi_K(x)\sim [K:\bQ]\log k$, so there is a non-negative real number~$C_4$ such that, for every $k\in\bZ_{>0}$,
	\[\sum_{\substack{\fkp\in\Specmax\fko_k\setminus A\\ p\leqslant k}} \frac{[K_{\fkp}:\bQ_p]\log p}{p-1}\leqslant C_4+(1-\beta+\varepsilon)[K:\bQ] k\log k.\]
The formal subscheme~$\widehat V$ is therefore $(1-\beta)$-arithmetic.
\end{proof}

\begin{lemme}\label{comparaison densites}
Let~$X$ be a subset of the set of prime numbers. The \emph{natural density} of~$X$ as the real number in~$[0,1]$ is defined by:
	\[d(X)=\varliminf_{N\to\infty} \frac 1 {\pi(N)}\sum_{\substack{2\leqslant n\leqslant N, \\ n\in X}} 1,\]
where $\pi(N)$ denotes the number of prime numbers at most equal to~$N$.

Then the density of~$X$ defined by
	\[\varliminf_{x\to\infty}\frac{1}{\psi_{\bQ}(x)}\sum_{\substack{p\in X,\\  p\leqslant x}}\frac{\log p}{p-1}.\]
is at least equal to the natural density~$d(X)$.
\end{lemme}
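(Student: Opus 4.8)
The plan is to compare the two averaging weights directly. We are comparing the density
\[
\delta(X) := \varliminf_{x\to\infty}\frac{1}{\psi_{\bQ}(x)}\sum_{\substack{p\in X,\ p\leqslant x}}\frac{\log p}{p-1}
\]
with the natural density $d(X)$, and we want $\delta(X)\geqslant d(X)$. The key point is that $\psi_\bQ(x)=\sum_{p\leqslant x}\frac{\log p}{p-1}\sim\log x$ (Lemma~\ref{poids densite}), so the ``$\psi$-weight'' is essentially the Mertens-type weight $\frac{\log p}{p-1}$, while the natural density uses the uniform counting weight $1$ normalized by $\pi(x)$. First I would reduce everything to a statement about partial sums: writing $S_X(x)=\sum_{p\leqslant x,\ p\in X}\frac{\log p}{p-1}$ and $N_X(x)=\#\{p\leqslant x: p\in X\}$, I want to show $\varliminf S_X(x)/\psi_\bQ(x)\geqslant \varliminf N_X(x)/\pi(x)$.

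The mechanism is Abel summation (partial summation) applied to the counting function $N_X$. Set $d=d(X)$. For any $\varepsilon>0$ there is $x_0$ with $N_X(t)\geqslant (d-\varepsilon)\pi(t)$ for all $t\geqslant x_0$. Writing $\frac{\log p}{p-1}=w(p)$ and noting $w$ is a positive decreasing function of $p$ (for $p\geqslant 2$), I would express
\[
S_X(x)=\int_{2^-}^{x} w(t)\,dN_X(t)
\]
and integrate by parts, or more simply use the elementary fact that for a decreasing weight, a lower bound on the counting function propagates to a lower bound on the weighted sum, up to a bounded error. Concretely, summing by parts,
\[
S_X(x)=N_X(x)w(x)+\int_2^x N_X(t)\bigl(-w'(t)\bigr)\,dt,
\]
and since $-w'(t)\geqslant 0$, replacing $N_X(t)$ by $(d-\varepsilon)\pi(t)$ for $t\geqslant x_0$ (the contribution from $t<x_0$ being $O(1)$) gives
\[
S_X(x)\geqslant (d-\varepsilon)\Bigl(\pi(x)w(x)+\int_2^x\pi(t)(-w'(t))\,dt\Bigr)+O(1)=(d-\varepsilon)\psi_\bQ(x)+O(1).
\]
Dividing by $\psi_\bQ(x)\to\infty$ and taking $\varliminf$ yields $\delta(X)\geqslant d-\varepsilon$, and letting $\varepsilon\to 0$ finishes the proof.

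The main technical point to get right is the choice of Stieltjes integration versus discrete Abel summation and making sure the ``boundary'' terms and the finitely many primes below $x_0$ only contribute $O(1)$, which is harmless since the normalizing factor $\psi_\bQ(x)$ diverges; here the asymptotic $\psi_\bQ(x)\sim\log x$ from Lemma~\ref{poids densite} (and the elementary Mertens-type estimate $\sum_{p\leqslant x}\frac{\log p}{p}=\log x+O(1)$, which also gives $\pi(t)w(t)=O(1)$ so the boundary term is negligible) is exactly what is needed. The inequality can fail to be an equality because the natural density uses a $\varliminf$ with the ``flat'' weight, and reweighting by the decreasing $w$ can only help a set that is, say, denser among small primes; so only the inequality $\delta(X)\geqslant d(X)$ holds in general, which is all that is claimed and all that is needed to combine Proposition~\ref{densite pc nulle alpha arithmetique} with a natural-density hypothesis on the vanishing $\fkp$-curvatures.
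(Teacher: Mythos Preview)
Your proof is correct and follows essentially the same approach as the paper: both arguments use Abel (partial) summation to transfer the lower bound $N_X(t)\geqslant (d-\varepsilon)\pi(t)$ to the weighted sum, exploiting that the weight $w(t)=\frac{\log t}{t-1}$ is decreasing. The paper uses the discrete form of Abel summation and then appeals to the asymptotics $\pi(n)\sim n/\log n$ and $\psi_\bQ(N)\sim\log N$ to evaluate the resulting sum, whereas you use the Stieltjes-integral form and recognize directly that $\pi(x)w(x)+\int_2^x\pi(t)(-w'(t))\,dt=\psi_\bQ(x)$ by undoing the same integration by parts; this makes your write-up slightly cleaner, but the substance is identical.
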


\begin{proof}
Let $\mathds 1_X$ denote the characteristic function of~$X$. For every integer~$n\geqslant 2$, we set $a_n=\mathds 1_X(n)$, $b_n=\mathds 1_X(n)\frac{\log n}{n-1}$, and for all $N\geqslant 1$, $A(N)=\sum_{n=2}^Na_n$ and $B(N)=\sum_{n=2}^Nb_n$. Hence $d(X)=\varliminf_N \frac 1{\pi(N)}A(N)$ and the density of~$X$ is $\varliminf_N\frac 1{\psi(N)}B(N)$, where $\psi(N)=\sum_{\substack{n\leqslant N,\\ n \text{ prime}}} \frac{\log n}{n-1}\sim \log(N)$ when $n$ goes to~$\infty$ (see Lemma~\ref{poids densite}).

Let~$\varepsilon>0$, and let $M$ be an integer such that for all $n\geqslant M$, 
	\begin{equation}\label{min A(n)}\frac{1}{\pi(n)}A(n)>d(X)-\varepsilon.\end{equation}
For every $N\geqslant 2$,
	\begin{align*}
	B(N)
	&= \frac{\log N}{N-1}A(N)-\sum_{n=2}^{N-1}A(n)\left(\frac{\log(n+1)}{n}-\frac{\log n}{n-1}\right),
	\end{align*}
by an Abel summation. 
When $N$ goes to~$\infty$,
	\begin{equation}
	\frac 1{\pi(N)}B(N)\geqslant \frac {d(X)-\varepsilon}{\psi(N)}\sum_{n=M}^{N-1}\pi(n)\left(\frac{\log n}{n-1}-\frac{\log(n+1)}{n}\right)+o(1).
	\end{equation}
Since $\frac{\log n}{n-1}-\frac{\log(n+1)}{n}\sim_{n\to\infty}\frac{\log n}{n^2}$ and $\pi(n)\sim \frac n{\log n}$,
	\[\frac 1{\pi(N)}B(N)\geqslant \frac {d(X)-\varepsilon}{\psi(N)}\sum_{n=M}^{N-1}\frac 1 n+\frac {d(X)-\varepsilon}{\psi(N)}\  o\left(\sum_{n=M}^{N-1}\frac 1 n\right)+o(1).\]
Since $\psi(N)\sim\log N$ (Lemma~\ref{poids densite}), letting $\varepsilon$ go to~$0$ we conclude that the density $\varliminf_N \frac 1{\pi(N)}B(N)$ is at least~$d(X)$.	
\end{proof}

We proved in Proposition~\ref{alpha analytique arithmetique} that an $\alpha$-analytic formal subscheme is $\alpha$-arithmetic. The converse does not hold, and here is an example of an $\alpha$-arithmetic formal subscheme which is not $\alpha$-analytic.

Let~$b$ be an algebraic number. Then the formal series~$x$ and~$y$ defined by
\[x(t)=(1+t)^b=\sum_{n=0}^{\infty}\frac{t^n}{n!}b(b-1)\dots(b-n+1),\]
\[y(t)=\frac 1{1+t}=\sum_{n=0}^{\infty}(-t)^n,\]
satisfy the following differential equation:
\[
\left\{
\begin{array}{ccl}
x'(t)&=&bx(t)y(t)   \\
y'(t)&=&-y(t)^2  . 
\end{array}
\right.
\]
Let $K=\bQ(b)$, let~$d$ denote the degree of~$K$ and let $F_b$ be the algebraic foliation over~$K$ (of dimension~1) generated by the vector field
	\[D=bx\frac{\partial}{\partial x}-y\frac{\partial}{\partial y}.\]

\begin{lemme}\label{contre-exemple}
Assume $b$ to be irrational. Then the germ of formal leaf of~$F_b$ through~$(1,1)$ is $\alpha$-analytic if and only if $\alpha\geqslant 1$.
\end{lemme}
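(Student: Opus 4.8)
The plan is to show the two directions of the equivalence separately. The ``if'' direction is essentially already done: since $F_b$ is an algebraic foliation, Lemma~\ref{equa diff implique 1-analytique} tells us that the germ of leaf through $(1,1)$ is $1$-analytic, hence $\alpha$-analytic for every $\alpha\geqslant 1$. So the whole content is the ``only if'' direction: if $b$ is irrational, the germ is \emph{not} $\alpha$-analytic for any $\alpha<1$. Equivalently, $\alpha(X,\widehat V,P)\geqslant 1$, where here $X=\bA^2_K$, $P=(1,1)$, and $\widehat V$ is parameterized (after translating $P$ to the origin) by the explicit series $x(t)=(1+t)^b$, $y(t)=1/(1+t)$.

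First I would reduce to a statement about the $\fkp$-adic sizes of the Taylor coefficients of a redressing parametrization. By the characterization in the Theorem defining $\alpha(X,\widehat V,P)$ (its point~1, together with points~2 and~3 reducing the $\bA^2$-case to any étale-local model), $\widehat V$ is $\alpha$-analytic iff there is some $f\in G_{{\rm an},a}$ with $a\leqslant\alpha+\varepsilon$ straightening $\widehat V$; and for a one-dimensional $\widehat V$ in $\bA^2$ any such $f$ differs from the tautological parametrization $t\mapsto(x(t),y(t))$ by precomposition with an element of $G_{{\rm an},0}$ (a change of coordinate on the source whose coefficients have, up to a convergent product of constants, $\fkp$-adic absolute value $\leqslant 1$ for almost all $\fkp$) and postcomposition with a coordinate change on the target that is étale and integral at $(1,1)$, hence again does not affect the $\fkp$-adic growth of coefficients for almost all $\fkp$. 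Therefore $\alpha(X,\widehat V,P)$ equals the infimum of the $a\geqslant 0$ for which there is a finite set $S$ of places and constants $C_\fkp\geqslant 1$ with $\prod_{\fkp\notin S}C_\fkp<\infty$ such that the coefficients $b(b-1)\cdots(b-n+1)/n!$ of $x(t)$ satisfy $\bigl|\tfrac{b(b-1)\cdots(b-n+1)}{n!}\bigr|_\fkp\leqslant C_\fkp^{\,n}\|n!\|_\fkp^{-a}$ for all $n$ and all $\fkp\notin S$. (The $y$-coordinate has integer coefficients, so it imposes no constraint; only $x$ matters.)

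Next I would show this forces $a\geqslant 1$ when $b$ is irrational. The point is to find, for infinitely many primes $p$, a coefficient that is ``as $p$-divisible in the denominator as $n!$'' — more precisely, to show $v_p\bigl(\tfrac{b(b-1)\cdots(b-n+1)}{n!}\bigr)$ is close to $-v_p(n!)$ for suitable $n$ with $n$ a power of $p$ (or $n\sim p$), so that no $C_\fkp^n$ with $\prod C_\fkp<\infty$ can absorb the $\|n!\|_\fkp^{-a}$ factor unless $a\geqslant 1$. Concretely, write $b=u/v$ in lowest terms if $b$ were rational — but $b$ is irrational, so instead I argue $\fkp$-adically: since $b\notin\bZ$, for infinitely many finite places $\fkp$ of $K$ we have $|b|_\fkp\leqslant 1$ and $b$ is not congruent to any of $0,1,\dots,p-1$ modulo $\fkp$ in a way that would make the numerator $p$-adically small; then taking $n=p$ one checks $v_\fkp\bigl(b(b-1)\cdots(b-p+1)\bigr)=0$ while $v_\fkp(p!)=1$, giving $\bigl|\tfrac{b(b-1)\cdots(b-p+1)}{p!}\bigr|_\fkp=|p!|_\fkp^{-1}=\|p!\|_\fkp^{-1}$ (up to local-degree normalization), which already pins $a\geqslant 1$ at that one place; iterating over powers $n=p^m$ and summing $v_\fkp$ over the relevant range shows the same asymptotically, and the divergence of $\sum_\fkp \tfrac{\log p}{p}$ (Mertens, as in Lemma~\ref{poids densite}) rules out absorbing the discrepancy into a convergent product of $C_\fkp$'s for any $a<1$. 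The irrationality of $b$ is exactly what guarantees infinitely many such ``good'' primes: if $b$ were rational with denominator $v$, all but finitely many primes (those not dividing $v$) would still be good, so actually the same lower bound holds — the real role of irrationality here is more subtle and enters through comparison with the \emph{integer} case, where $(1+t)^b$ is a polynomial and $\widehat V$ is algebraic (so $\alpha=0$ is possible via a different straightening); the lemma is asserting that as soon as $b\notin\bZ$ this degeneration cannot occur.

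The main obstacle I anticipate is the bookkeeping in the reduction of the first paragraph: making precise that $\alpha(X,\widehat V,P)$ is computed by the \emph{given} parametrization $(x(t),y(t))$ and not some cleverer straightening — i.e. that composing with elements of $G_{{\rm an},0}$ on either side cannot lower the exponent $a$ — relies on the invariance properties established in the Theorem and in Lemma~\ref{polygone de Newton}, and one must check the target coordinate change at $(1,1)$ genuinely lies in the relevant integral structure for almost all $\fkp$. Once that is in place, the arithmetic heart — a $p$-adic valuation estimate for the falling factorial $b(b-1)\cdots(b-n+1)$ combined with Legendre's formula $v_p(n!)=\tfrac{n-s_p(n)}{p-1}$ and the divergence of $\sum_p \tfrac{\log p}{p}$ — is the same kind of computation already appearing in the proof that germs of leaves are $1$-analytic, run in reverse to produce a \emph{lower} bound.
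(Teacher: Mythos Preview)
Your ``if'' direction is fine and matches the paper. For the ``only if'' direction, your overall strategy—estimate the $\fkp$-adic size of $\binom{b}{n}$ at enough primes to force $a\geqslant 1$—is the right one and is what the paper does. But two points need correction.

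First, your analysis of the role of irrationality is wrong. You write that ``if $b$ were rational with denominator $v$, all but finitely many primes would still be good.'' The opposite is true: if $b\in\bQ$, then for every prime $p\nmid v$ the residue $b\bmod p$ lies in $\bF_p$, so $b$ \emph{is} congruent to one of $0,1,\dots,p-1$ modulo $p$, and hence the numerator $b(b-1)\cdots(b-p+1)$ picks up a factor of $p$—no prime is ``good'' in your sense. Irrationality is not a side remark here; it is exactly what makes the argument work. The paper invokes Chebotarev: since $b\notin\bQ$, the set of primes $\fkp$ of $K=\bQ(b)$ for which the residue of $b$ in $\fko_K/\fkp$ does \emph{not} lie in the prime field $\bF_p$ has positive natural density. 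Only at those primes do you get $|b(b-1)\cdots(b-n+1)|_\fkp=1$ and hence $|\binom{b}{n}|_\fkp=|n!|_\fkp^{-1}$. The divergence you need is then $\sum_{\fkp\text{ good}}\frac{\log p}{p-1}=+\infty$, which follows from Lemma~\ref{comparaison densites} once you have positive density.

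Second, your reduction step—arguing that every straightening $f\in G_{{\rm an},a}$ differs from the explicit parametrization by pre/post-composition with elements of $G_{{\rm an},0}$—is not justified as stated, and you are right to flag it. The paper bypasses this entirely by working with the sup-norm on $\fkp$-adic disks via Lemma~\ref{polygone de Newton}: any $f\in G_{{\rm an},a}$ straightening $\widehat V$ satisfies $\sup_{|s|\leqslant r}|f_i(s,0)|_\fkp\leqslant C_\fkp r$ on disks of radius $r<C_\fkp^{-1}p^{-a[K_\fkp:\bQ_p]/(p-1)}$. Since the second coordinate $y-1$ is an \emph{isometric} local parameter on $\widehat V$ (its coefficients are rational integers), this sup-norm bound transfers to the intrinsic function $x-1$ on $\widehat V$, and one computes directly that for the good primes $\sup_{|t|\leqslant r}|x(t)-1|_\fkp=\max_n r^n/|n!|_\fkp$. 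Comparing the two gives $\log C_\fkp\geqslant(1-\alpha)[K_\fkp:\bQ_p]\tfrac{\log p}{p-1}$, and the divergence over the positive-density set of good primes forces $\alpha\geqslant 1$. The sup-norm reformulation is what makes the ``any straightening'' quantifier tractable; your coefficient-by-coefficient reduction would need more work to achieve the same.
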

\begin{proof}
It follows from Lemma~\ref{equa diff implique 1-analytique}, that the germ of formal leaf of~$F_b$ through~$(1,1)$ is 1-analytic. Let $\alpha\leqslant 1$ and assume that this germ of formal leaf is $\alpha$-arithmetic. Then, by Lemma~\ref{polygone de Newton}, there is a finite subset~$S$ of finite places of~$K$, there is a family $(C_{\fkp})_{\fkp\notin S}$ of real numbers at least~1 with $\prod_{\fkp\notin S}C_{\fkp}<\infty$ such that, for all $r\in\mathopen[0,{C_{\fkp}}^{-1}p^{-\frac{\alpha}{p-1}[K_{\fkp}:\bQ_p]}\mathclose[$, 
	\begin{equation}\label{newton}
	\sup_{t\in\bC_{p} \text{ s.t. } |t|_{\fkp}\leqslant r}|x(t)|_{\fkp}\leqslant C_{\fkp}r.\end{equation}

Let $\fkp$ be such that $b$ is not equal to a rational integer modulo~$\fkp$. Then, for all $r$ less than the radius of convergence of~$x$,
	\begin{align*}
	\sup_{|t|_{\fkp}\leqslant r}|x(t)|_{\fkp}=\max_{n\in\bN}\frac{r^n}{|n!|_{\fkp}}.
	\end{align*}
Thus, for all $r<{C_{\fkp}}^{-1}p^{-\frac{\alpha}{p-1}[K_{\fkp}:\bQ_p]}$,
\[\log\sup_{|t|_{\fkp}\leqslant r}|x(t)|_{\fkp} \geqslant \max_n \left(n\log r+\left(\frac{n}{p-1}-\left\lceil \frac{\log n}{\log p} \right\rceil\right)[K_{\fkp}:\bQ_p]\log p\right).\]
From this inequality and Inequality~\eqref{newton}, letting $r$ go to ${C_{\fkp}}^{-1}p^{-\frac{\alpha}{p-1}[K_{\fkp}:\bQ_p]}$ we obtain that for all $n\in\bZ_{\geqslant 0}$,
	\[\log C_{\fkp}\geqslant \max_n (1-\alpha)\frac{\log p}{p-1}[K_{\fkp}:\bQ_p]+\frac 1 n\left(\frac{\alpha}{p-1}-\left\lceil \frac{\log n}{\log p} \right\rceil\right)[K_{\fkp}:\bQ_p]\log p,\]
and therefore $\log C_{\fkp}\geqslant (1-\alpha)[K_{\fkp}:\bQ_p]\frac{\log p}{p-1}$ for all $\fkp\in\fko_K$ such that $b$ is not a rational integer modulo~$\fkp$. From the Cheborarev theorem, since $b$ is irrational the (natural) density of such $\fkp$ is positive, and thus from Lemma~\ref{comparaison densites} the sum of $\log C_{\fkp}$ over those $\fkp$ diverges unless $\alpha=1$, and the germ of formal leaf of~$F_b$ through~$(1,1)$ is not $\alpha$-arithmetic for $\alpha<1$.
\end{proof}

\begin{lemme}
Assume that the extension $\bQ(b)$ of $\bQ$ is Galois. Then the germ of formal leaf of~$F_b$ through the point~$(1,1)$ is \mbox{$\left(1-\frac 1 {[\bQ(b):\bQ]}\right)$-arithmetic}. 
\end{lemme}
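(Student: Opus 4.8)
The plan is to show that the foliation $F_b$ has density of vanishing $\fkp$-curvatures at least $1/[\bQ(b):\bQ]$ and then invoke Proposition~\ref{densite pc nulle alpha arithmetique}. First I would analyze when the $\fkp$-curvature of $F_b$ vanishes. Since $F_b$ is generated by the vector field $D=bx\,\partial/\partial x - y\,\partial/\partial y$, a direct computation shows that $D^p$ acts on the coordinate functions by $D^p(x) = b^p x \cdot(\text{something})$; more precisely, on the leaf the relevant condition is that the ``$p$-curvature'' $\psi(D) = D^p - D^{[p]}$ (where $D^{[p]}$ is the natural $p$-th power in the restricted structure) vanishes modulo $\fkp$. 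A short calculation with $D$ acting diagonally — $D$ has eigenvalues $b$ and $-1$ on $x$ and $y$ — gives that the $\fkp$-curvature of $F_b$ vanishes precisely when $b^p \equiv b \pmod{\fkp}$, i.e. when $b$ lies in the prime field $\bF_p$ modulo $\fkp$ (equivalently when $\fkp$ splits completely in $\bQ(b)/\bQ$, using that $\bQ(b)/\bQ$ is Galois). I expect this computation of the $\fkp$-curvature to be the main obstacle, since one must carefully match the notion of ``vanishing $\fkp$-curvature'' from Section~\ref{section feuilletage} (stability of $\mathscr F\otimes\bF_\fkp$ under $p$-th power) with the concrete arithmetic condition on $b$; the diagonal form of $D$ should make this tractable.

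Next I would estimate the density. By the Galois hypothesis, the set of primes $p$ such that $b \bmod \fkp$ lies in $\bF_p$ for some (equivalently every) $\fkp\mid p$ is exactly the set of primes splitting completely in $\bQ(b)/\bQ$, and by the Chebotarev density theorem (or already Frobenius), this set has natural density $1/[\bQ(b):\bQ]$. Then by Lemma~\ref{comparaison densites}, the weighted density appearing in Definition~\ref{def densite} — namely $\varliminf_{x\to\infty}\frac{1}{\psi_{\bQ}(x)}\sum_{p\leqslant x,\ p \text{ good}}\frac{\log p}{p-1}$ — is at least the natural density $1/[\bQ(b):\bQ]$. Since $\psi_K = [K:\bQ]\psi_{\bQ}$ and the local degrees $[K_\fkp:\bQ_p]$ over a totally split prime are all $1$, the density $\beta$ of vanishing $\fkp$-curvatures of $F_b$ defined in~\eqref{densite p-courbures} satisfies $\beta \geqslant \frac{1}{[\bQ(b):\bQ]}$.

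Finally, applying Proposition~\ref{densite pc nulle alpha arithmetique} to $X = \bP^1\times\bP^1$ (or any projective model containing the point $(1,1)$ as a smooth rational point, which is $\bQ$-rational since $b\in K$ but the point $(1,1)$ has coordinates in $\bQ$), with the foliation $F_b$ and $P=(1,1)$, we conclude that the germ of formal leaf $\widehat V$ is $(1-\beta)$-arithmetic. Since $1-\beta \leqslant 1 - \frac{1}{[\bQ(b):\bQ]}$ and $(S,\alpha)$-arithmeticity is monotone in $\alpha$, the germ is $\left(1-\frac{1}{[\bQ(b):\bQ]}\right)$-arithmetic, as claimed. One should double-check that the foliation $F_b$ is indeed everywhere regular on the relevant open set (the vector field $D$ is nonvanishing away from $x=0$ and $y=0,\infty$, and in particular at $(1,1)$), and that the base point is a smooth rational point — both are immediate.
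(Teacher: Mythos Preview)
Your proposal is correct and follows essentially the same approach as the paper: compute that the $\fkp$-curvature of $F_b$ vanishes precisely when $b^p\equiv b\pmod\fkp$, invoke Chebotarev to get natural density $1/[\bQ(b):\bQ]$, pass to the weighted density via Lemma~\ref{comparaison densites}, and conclude by Proposition~\ref{densite pc nulle alpha arithmetique}. The paper makes the $\fkp$-curvature computation explicit via the binomial expansion $D^p=\sum_{n=0}^p\binom{p}{n}(bx\,\partial_x)^n(-y\,\partial_y)^{p-n}\equiv b^p x\,\partial_x+(-1)^p y\,\partial_y\pmod\fkp$ (using that the two commuting factors have $(\lambda x\,\partial_x)^k=\lambda^k x\,\partial_x$), which is exactly the diagonal calculation you anticipate; your interpretation of the condition $b^p\equiv b\pmod\fkp$ as ``$p$ splits completely'' is valid for all but finitely many $p$ and is implicit in the paper's appeal to Chebotarev.
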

\begin{proof}
From the binomial theorem, for every prime number~$p$,
	\[D^p=\sum_{n=0}^p \binom{p}{n} \left(bx\frac{\partial}{\partial x} \right)^n\left(-y\frac{\partial}{\partial y} \right)^{p-n}.\]
Let~$\fkp$ be a maximal ideal of~$\fko_K$ and let~$p$ be the characteristic of the residue field~$\fko_K/\fkp$. Then $D^p$ is still a derivation after reduction modulo~$\fkp$, and the~$\fkp$-curvature of~$F_b$ vanishes if and only of $F_b$ modulo $\fkp$ is stable under $p$-power. Since
	\begin{align*}D^p&=\left(bx\frac{\partial}{\partial x} \right)^p+\left(-y\frac{\partial}{\partial y} \right)^{p} \mod\fkp\\
	&=b^px\frac{\partial}{\partial x}+(-1)^py\frac{\partial}{\partial y}\mod\fkp,
	\end{align*}
the $\fkp$-curvature of~$F_b$ vanishes if and only if
	\begin{equation}\label{condition sur a}
	b^p=b \mod\fkp.
	\end{equation}
It follows from the Chebotarev density theorem~(\citep{lang_algebraic-number-theory} Theorem~10 page~169) that the natural density of vanishing $\fkp$-curvatures of~$F_b$  is equal to~$\frac 1 {[\bQ(b):\bQ]}$.	Hence, it follows from Lemma~\ref{comparaison densites} that the density defined by~\eqref{densite p-courbures} of vanishing $\fkp$-curvatures is at least equal to~$\frac 1 {[\bQ(b):\bQ]}$, and from Proposition~\ref{densite pc nulle alpha arithmetique}, the leaf of~$F_b$ through~$(1,1)$ is therefore $\left(1-\frac 1 {[\bQ(b):\bQ]}\right)$-arithmetic.
\end{proof}

\section{Formal subschemes based at a closed point} \label{section points fermes}

Until now, we considered formal subschemes of the formal completion of a projective variety at a rational point. In this paragraph, we will define the \emph{$\alpha$-analyticity} of a smooth formal subscheme of the formal completion at any closed point, so as the associated evaluation morphisms, and then we will define what it means for such a formal subscheme to be \emph{$\alpha$-arithmetic}.

Let~$X$ be a projective variety of dimension~$n$ over a number field~$K$ and let~$P$ be a smooth closed point of~$X$. Let~$K(P)$ denote the residue field of~$P$. Then the formal completion~$\widehat X_P$ of~$X$ at~$P$ is isomorphic to the formal spectrum of the ring $K(P)[[t_1,\dots,t_n]]$. Let~$K'$ be a Galois extension of~$K$ containing the residue field~$K(P)$. When we extend scalars from~$K$ to~$K'$, $\Specmax K(P)[[t_1,\dots,t_n]]$ decomposes in a disjoint union of formal schemes~$\Specmax K'[[t_1,\dots,t_n]]$ indexed by the set of embeddings of~$K(P)$ in~$K'$:
	\[\widehat X_P\simeq \bigsqcup_{\sigma:K(P)\hookrightarrow K'} \Specmax K'[[t_1,\dots,t_n]].\]
Let~$\widehat V$ be a smooth formal subscheme of dimension~$d$ of~$\widehat X_P$.  Let~$I_{\widehat V}$  the ideal of $K(P)[[t_1,\dots,t_n]]$ defining~$\widehat V$. When we extend scalars from~$K$ to~$K'$, the formal subscheme~$\widehat V$ decomposes into a disjoint union of $K'$-formal subschemes, indexed by the embeddings of~$K(P)$ in~$K'$:
	\begin{equation}
	\widehat V\otimes_K K'=\bigsqcup_{\sigma:K(P)\hookrightarrow K'}\widehat V_{\sigma},
	\end{equation}
where $\widehat V_{\sigma}$ is defined by the ideal~$I_{\widehat V}\otimes_{K(P),\sigma}K'\subseteq K'[[t_1,\dots,t_n]]$.

\begin{lemme} \label{conjugues a-analytiques}
Let $X$ be a projective variety defined over a number field~$K$ and let~$P$ be a closed point of~$X$. Let~$\widehat V$ be a smooth formal subscheme of~$\widehat X_P$. Let~$K'$ be a Galois extension of~$K$ containing the residue field~$K(P)$ of~$P$. Let~$\alpha$ be a non-negative real number. If there exists an embedding~$\tilde\sigma$ of $K(P)$ in $K'$ such that $\widehat V_{\tilde\sigma}$ is $\alpha$-analytic, then for every embedding~$\sigma:K(P)\hookrightarrow K'$, $\widehat V_{\sigma}$ is $\alpha$-analytic.
\end{lemme}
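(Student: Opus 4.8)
The plan is to deduce the statement from two ingredients: the transitivity of $\Gal(K'/K)$ on the set of $K$-embeddings of $K(P)$ into $K'$, and the invariance of the quantity $\alpha(X',\widehat W,Q)$ attached to a triple over a number field (Section~\ref{alpha analytique}) under ``twisting'' by a field automorphism of the base.

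First I would note that $K$ has characteristic zero, so $K(P)/K$ is separable and the decomposition $\widehat V\otimes_K K'=\bigsqcup_{\sigma}\widehat V_{\sigma}$ is the one recalled above; the group $G=\Gal(K'/K)$ acts on the finite set $\Hom_K(K(P),K')$ by $g\cdot\sigma=g\circ\sigma$, and this action is transitive because $K'/K$ is normal (given $K$-embeddings $\sigma,\tilde\sigma\colon K(P)\hookrightarrow K'$, the $K$-embedding $\sigma\circ\tilde\sigma^{-1}$ of $\tilde\sigma(K(P))$ into $K'$ extends, by normality, to some $g\in G$ with $g\circ\tilde\sigma=\sigma$). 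Hence it is enough to fix $g\in G$ and an embedding $\tilde\sigma$ and to prove that $\widehat V_{g\circ\tilde\sigma}$ is $\alpha$-analytic whenever $\widehat V_{\tilde\sigma}$ is.

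For this I would view $\widehat V_{\tilde\sigma}$ and $\widehat V_{g\circ\tilde\sigma}$ as smooth formal subschemes of the smooth $K'$-variety $X_{K'}=X\otimes_K K'$ at the $K'$-rational points $P_{\tilde\sigma}$, $P_{g\circ\tilde\sigma}$ above $P$, so that ``$\alpha$-analytic'' has its Section~\ref{alpha analytique} meaning for a triple over the number field $K'$. The automorphism $\id_X\otimes\Spec(g)$ of $X_{K'}$, which is $K$-linear and $g$-semilinear over $K'$, carries the triple $(X_{K'},\widehat V_{\tilde\sigma},P_{\tilde\sigma})$ onto $(X_{K'},\widehat V_{g\circ\tilde\sigma},P_{g\circ\tilde\sigma})$, since for the defining ideals $(I_{\widehat V}\otimes_{K(P),\tilde\sigma}K')\otimes_{K',g}K'=I_{\widehat V}\otimes_{K(P),g\circ\tilde\sigma}K'$. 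It then remains to see that $\alpha(\,\cdot\,)$ is insensitive to such a twist. Using property~(3) of the theorem of Section~\ref{alpha analytique} one reduces to an étale chart $U\to\bA^n_{K'}$, $U\subseteq X_{K'}$ containing the base point and sending it to $0$; there ``$\alpha(\,\cdot\,)\leqslant\alpha$'' says that for every $a>\alpha$ the image of the subscheme in $\hat\bA^n_{K',0}$ becomes $\bA^d$ after pulling back by some $f\in G_{{\rm an},a}$. Applying $g$ to the coefficients of the chart, of the subscheme and of $f$ produces a witness for the twisted datum, the key point being that $f\mapsto{}^gf$ maps $G_{{\rm an},a}$ into itself: $g$ permutes the places of $K'$, sends Archimedean places to Archimedean places, the completion isomorphisms it induces are isometries of the same local degree over $\bQ_p$ (because $g$ fixes $\bQ$), and the weights $\|I!\|_{\fkp}$ — which depend only on the rational integer $I!$ and on that local degree — are $G$-invariant; so the auxiliary finite set $S$, the constants $C_{\fkp}$ and the convergent product $\prod_{\fkp}C_{\fkp}$ are merely reindexed. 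Equivalently, $(X',\widehat W,Q)\mapsto\alpha({}^gX',{}^g\widehat W,{}^gQ)$ satisfies properties~(1)--(3) of that theorem, hence equals $\alpha$ by its uniqueness part. This gives $\alpha(X_{K'},\widehat V_{g\circ\tilde\sigma},P_{g\circ\tilde\sigma})=\alpha(X_{K'},\widehat V_{\tilde\sigma},P_{\tilde\sigma})$, and the lemma follows.

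The step I expect to be the main obstacle is the bookkeeping just described: making it precise that the Galois twist of $\widehat V_{\tilde\sigma}$ is exactly $\widehat V_{g\circ\tilde\sigma}$, and that $G_{{\rm an},a}$, together with the finiteness of $\prod_{\fkp}C_{\fkp}$, is stable under $f\mapsto{}^gf$. Neither requires any new analytic or arithmetic input beyond Sections~\ref{alpha analytique} and~\ref{section points fermes}.
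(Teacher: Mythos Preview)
Your proposal is correct and follows essentially the same approach as the paper: use the transitivity of $\Gal(K'/K)$ on $K$-embeddings $K(P)\hookrightarrow K'$, then show that applying a Galois automorphism $\gamma$ to (the coefficients of) a parametrization witnessing $\alpha$-analyticity of $\widehat V_{\tilde\sigma}$ yields a parametrization witnessing $\alpha$-analyticity of $\widehat V_{\gamma\tilde\sigma}$, the bookkeeping being that $\gamma$ permutes the places of $K'$ isometrically and that $\|I!\|_{\fkp}$ depends only on $\fkp\cap\bZ$. The only difference is cosmetic: the paper works directly with the coefficients $a_I(i)$ and checks $\|\gamma(a_I(i))\|_{\fkp}=\|a_I(i)\|_{\gamma^{-1}\fkp}$, while you phrase the same computation as Galois-stability of $G_{{\rm an},a}$ and invoke the uniqueness part of the characterization theorem of Section~\ref{alpha analytique}.
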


\begin{proof}We denote by~$n$ the dimension of~$X$ and by~$d$ the dimension of~$\widehat V$. Let $\ol\alpha>\alpha$. Since $\widehat V_{\tilde\sigma}$ is $\alpha$-analytic, it can be parameterized by formal series $f_1,\dots,f_n\in K[[x_1,\dots,x_d]]$, $f_i=\sum_I a_I(i)x^I,$ which, at every place of~$K'$, have a positive radius of convergence and satisfy: there exists a finite subset~$S$ of places~$K'$ such that, for every~$\fkp\in\Specmax(\fko_{K'})\setminus S$, there exists $C_{\fkp}>0$ such that, for every $I\in{\bZ_{\geqslant 0}}^d$, for every $i\in\{1,\dots,n\}$, 

	\[\|a_I(i)\|_{\fkp}\leqslant \frac{C_{\fkp}^{|I|}}{{\|I!\|_{\fkp}}^{\ol\alpha}},\]
	\[\prod_{\fkp\in\Specmax\fko_{K'}}C_{\fkp}<\infty.\]
Let $\sigma$ be an embedding of~$K(P)$ in~$K'$. There exists $\gamma\in\Gal(K'/K(P))$ such that $\sigma=\gamma\tilde\sigma$. The formal subscheme~$\widehat V_{\gamma\tilde\sigma}$ is parameterized by $\gamma\circ f_1,\dots,\gamma\circ f_n$ whose coefficients are the $\gamma(a_I(1)),\dots,\gamma(a_I(n))$. Let $\fkp$ be a maximal ideal of~$\fko_{K'}$. Then
	\begin{align*}
	\|\gamma(a_I(i))\|_{\fkp}&=\|\gamma(a_I(i))\|_{\gamma(\gamma^{-1}(\fkp))}=\|a_I(i)\|_{\gamma^{-1}(\fkp)}
	\leqslant \frac{C_{\gamma^{-1}(\fkp)}^{|I|}}{\|I!\|_{\gamma^{-1}(\fkp)}^{\ol\alpha}}.
	\end{align*}	
As $\gamma_{|\bQ}$ is the identity map, $\gamma^{-1}(\fkp)\cap \bZ=\fkp\cap \bZ$, and hence the integers have the same $\fkp$-adic and $\gamma^{-1}(\fkp)$-adic valuations. So we get
	\[\|\gamma(a_I(i))\|_{\fkp}\leqslant \frac{C_{\gamma^{-1}(\fkp)}^{|I|}}{\|I!\|_{\fkp}^{\ol\alpha}}.\]
Moreover,
	\[\prod_{\fkp\in\Specmax\fko_{K'}}C_{\gamma^{-1}(\fkp)}=\prod_{\fkp\in\Specmax\fko_{K'}}C_{\fkp}<\infty.\]
The formal subscheme $\widehat V_{\sigma}=\widehat V_{\gamma\tilde\sigma}$ is hence $\alpha$-analytic.
\end{proof}

\begin{definition}\label{def alpha-analytique sur K}
Let $X$ be a projective variety defined over a number field~$K$ and let~$P$ be a closed point of~$X$. Let~$\widehat V$ be a smooth formal subscheme of~$\widehat X_P$. Let~$K'$ be a Galois extension of~$K$ containing the residue field~$K(P)$. Let~$\alpha$ be a non-negative real number. The formal subscheme~$\widehat V$ is said to be \emph{$\alpha$-analytic over~$K'$} if for every embedding $\sigma:K(P)\hookrightarrow K'$, $\widehat V_{\sigma}$ is $\alpha$-analytic.
\end{definition}

\begin{lemme}\label{alpha analytique et extension des scalaires}
Let $X$ be a projective variety defined over a number field~$K$ and let~$P$ be a closed point of~$X$. Let~$\widehat V$ be a smooth formal subscheme of~$\widehat X_P$. Let~$K'$ be a Galois extension of~$K$ containing the residue field~$K(P)$ and let~$\alpha$ be a non-negative real number.

If $\widehat V$ is $\alpha$-analytic over~$K'$, then $\widehat V$ is $\alpha$-analytic over~$K''$ for every finite extension~$K''$ of~$K'$.
\end{lemme}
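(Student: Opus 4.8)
The plan is to reduce, via the decomposition of $\widehat V$ into conjugates, to a base-change statement for a single formal subscheme based at a rational point, and then — over affine space — to transport a straightening automorphism lying in some $G_{{\rm an},\bar\alpha}$ from $K'$ to $K''$; the point will be that the estimates defining $G_{{\rm an},\bar\alpha}$ survive thanks to the chosen normalisation of absolute values and the fundamental identity $\sum_{\fkq\mid\fkp}[K''_\fkq:K'_\fkp]=[K'':K']$.

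First I would reduce to a rational point. Since $K'/K$ is Galois with $K(P)\subseteq K'\subseteq K''$, every $K$-embedding $\sigma\colon K(P)\hookrightarrow K''$ factors uniquely as $K(P)\xrightarrow{\sigma_0}K'\hookrightarrow K''$, and then $\widehat V_\sigma=\widehat V_{\sigma_0}\otimes_{K'}K''$. By Definition~\ref{def alpha-analytique sur K}, $\widehat V$ is $\alpha$-analytic over $K''$ if and only if each $\widehat V_{\sigma_0}\otimes_{K'}K''$ is $\alpha$-analytic, and by hypothesis each $\widehat V_{\sigma_0}$ is $\alpha$-analytic (at the $K'$-rational point $P_{\sigma_0}$). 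So it is enough to prove: for $Y$ a projective variety over $K'$, $Q$ a smooth $K'$-rational point and $\widehat W$ a smooth formal subscheme of $\widehat Y_Q$, one has $\alpha(Y\otimes_{K'}K'',\widehat W\otimes_{K'}K'',Q)\leqslant\alpha(Y,\widehat W,Q)$.

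Next I would reduce this to affine space and carry out the core estimate. Choosing an étale chart $U\to\bA^n_{K'}$ around $Q$ mapping $Q$ to $0$, which base-changes to an étale chart over $K''$, points~2 and~3 of the theorem in Section~\ref{alpha analytique} bring us to the case $Y=\bA^n_{K'}$, $Q=0$. So let $\bar\alpha>\alpha(\bA^n_{K'},\widehat W,0)$ and pick $f=(f_1,\dots,f_n)\in G_{{\rm an},\bar\alpha}$ over $K'$ with $f^*\widehat W=\bA^d$. The same tuple $f$, with coefficients now read in $K''$, still has $Df(0)\in\GL_n(K'')$, and $f^*(\widehat W\otimes_{K'}K'')=\bA^d$ by compatibility of pullback with base change; it remains to see $f\in G_{{\rm an},\bar\alpha}$ over $K''$. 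For a finite place $\fkq$ of $K''$ above $\fkp\in\Specmax\fko_{K'}$ put $n_\fkq=[K''_\fkq:K'_\fkp]$; the chosen normalisations give $|x|_\fkq=|x|_\fkp^{n_\fkq}$ for every $x\in K'$ (the analogue of~\eqref{h(phi) et extension des scalaires} for the extension $K''/K'$), hence $\|I!\|_\fkq=\|I!\|_\fkp^{n_\fkq}$. Let $S$, $(C_\fkp)_{\fkp\notin S}$ be the data for $f$ over $K'$; let $S'$ be the set of Archimedean places of $K''$ together with the places of $K''$ above $S$, and set $C'_\fkq=C_\fkp^{n_\fkq}\geqslant 1$ for $\fkq\notin S'$. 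Then $\|f_{i,I}\|_\fkq=\|f_{i,I}\|_\fkp^{n_\fkq}\leqslant{C'_\fkq}^{|I|}/\|I!\|_\fkq^{\bar\alpha}$ and
\[\prod_{\fkq\notin S'}C'_\fkq=\prod_{\fkp\notin S}C_\fkp^{\sum_{\fkq\mid\fkp}n_\fkq}=\Big(\prod_{\fkp\notin S}C_\fkp\Big)^{[K'':K']}<\infty .\]
Moreover each $f_i$ has positive radius of convergence at every place of $K''$: at a finite place $\fkq\mid\fkp$ the radius is the $n_\fkq$-th power of the radius at $\fkp$, and an Archimedean place of $K''$ restricts to an Archimedean place of $K'$ with the same absolute value on $K'$. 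Hence $f\in G_{{\rm an},\bar\alpha}$ over $K''$, so $\alpha(\bA^n_{K''},\widehat W\otimes_{K'}K'',0)\leqslant\bar\alpha$; letting $\bar\alpha\downarrow\alpha(\bA^n_{K'},\widehat W,0)$ yields the desired inequality.

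The main thing to keep straight — and the only slightly delicate point — is the interaction of the two normalisations of absolute values under $K''/K'$, namely the relation $|x|_\fkq=|x|_\fkp^{[K''_\fkq:K'_\fkp]}$ on $K'$ together with $\sum_{\fkq\mid\fkp}[K''_\fkq:K'_\fkp]=[K'':K']$: this is exactly what forces the infinite product $\prod_\fkq C'_\fkq$ to stay finite after base change. Everything else is a transcription of the definitions, the affine reduction being handled just as in the proof of the theorem in Section~\ref{alpha analytique}.
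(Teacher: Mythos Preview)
Your proof is correct and follows essentially the same approach as the paper's: the heart of both arguments is the transport of the $G_{{\rm an},\bar\alpha}$-estimates from $K'$ to $K''$ via the relation $|x|_{\fkq}=|x|_{\fkp}^{[K''_{\fkq}:K'_{\fkp}]}$ on $K'$ together with $\sum_{\fkq\mid\fkp}[K''_{\fkq}:K'_{\fkp}]=[K'':K']$, which keeps the product $\prod_{\fkq}C'_{\fkq}$ finite. The only cosmetic differences are that the paper works directly with the coefficients $a_I(i)$ of a parametrization (rather than the full straightening automorphism $f$ and the invariant $\alpha(X,\widehat V,P)$), and that it reduces to a single embedding by invoking Lemma~\ref{conjugues a-analytiques}, whereas you argue the reduction directly from the normality of $K'/K$; both shortcuts amount to the same thing.
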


\begin{proof} Let $\ol\alpha>\alpha$. Let $\sigma$ be an embedding of~$K(P)$ in~$K'$ and let $a_I(i)\in K'$ be the coefficients of a parametrization of~$\widehat V_{\sigma}$. If $\widehat V$ is $\alpha$-analytic over~$K'$, there exists a finite subset~$S$ of places of~$K'$ such that, for every $\fkp\in\Specmax(\fko_{K'})\setminus S$,
\[\|a_I(i)\|_{\fkp}\leqslant \frac{C_{\fkp}^{|I|}}{\|I!\|_{\fkp}^{\ol\alpha}}.\]
If we denote by $i$ the inclusion of~$K'$ in~$K''$, then $i\circ\sigma$ is an embedding of~$K(P)$ in~$K''$ and the $a_I(i)$ are the coefficients of a parametrization of $\widehat V_{i\circ\sigma}$. By Lemma~$\ref{conjugues a-analytiques}$, it is sufficient to prove that $\widehat V_{i\circ\sigma}$ is $\alpha$-analytic over~$K''$. Let~$S'$ be the set of the maximal ideals of~$K''$ lying above the maximal ideals of~$K'$ which are in~$S$. Let~$\fkq\in\Specmax(\fko_K)\setminus S'$ and let~$\fkp$ be the maximal ideal of~$K'$ lying under~$\fkq$. Then $\|a_I(i)\|_{\fkq}=\|a_I(i)\|_{\fkp}^{[K''_{\fkq}:K'_{\fkp}]}$. Since $\prod_{\fkq\notin S'}C_{\fkp}^{[K''_{\fkq}:K'_{\fkp}]}={\left(\prod_{\fkp\notin S}C_{\fkp}\right)}^{[K:\bQ]}$ converges, $\widehat V_{i\circ\sigma}$ is $\alpha$-analytic over~$K''$.
\end{proof}

\begin{definition}\label{def alpha-analytique point fermé}Let $X$ be a projective variety defined over a number field~$K$ and let~$P$ be a closed point of~$X$. Let~$\widehat V$ be a smooth formal subscheme of~$\widehat X_P$. Let~$\alpha$ be a non-negative real number. The formal subscheme~$\widehat V$ is said to be \emph{$\alpha$-analytic} if there exists a Galois extension~$K'$ of~$K$, containing the residue field~$K(P)$ of~$P$ such that $\widehat V$ is $\alpha$-analytic over~$K'$.
\end{definition}

We also define a notion of $\alpha$-arithmeticity for such a formal subscheme, based at a closed point. Let~$\sigma$ be an embedding of~$K(P)$ in~$K'$, with $K'$ a Galois extension of~$K$ containing $K(P)$. We recall the definition of the evaluation morphisms along the formal subscheme~$\widehat V_{\sigma}$. For every non-negative integer~$k$, let~$(V_{\sigma})_k$ be the $k$-th infinitesimal neighborhood of~$P^{\sigma}$ in~$\widehat V_{\sigma}$. Hence we have
$\{P^{\sigma}\}=(V_{\sigma})_0,$ for every $k$,  $(V_{\sigma})_k\subseteq (V_{\sigma})_{k+1}$ and $\widehat V_{\sigma}=\lim\limits_{\substack{\rightarrow\\k}}(V_{\sigma})_k.$

Let $L$ be an ample line bundle on~$X$. We define the following $K'$-vector spaces and $K'$-linear maps, for all integers $D,k$:
\[E_{D}=\Gamma(X_{K'},L^{\otimes D}),\]
\begin{align*}
\eta_{D,\widehat V_{\sigma}}:E_{D}&\rightarrow \Gamma(\widehat V_{\sigma},L^D)\\
s&\mapsto s_{|\widehat V_{\sigma}},\end{align*}
\begin{align}
\eta_{D,\widehat V_{\sigma}}^k:E_{D}&\rightarrow \Gamma((V_{\sigma})_k,L^D) \label{eta conjugue}\\
s&\mapsto s_{|(V_{\sigma})_k}.\nonumber
\end{align}
The vector spaces
	\begin{equation}\label{E^k_D conjugue}
	E^k_{D,\widehat V_{\sigma}}=\ker\eta_{D,\widehat V_{\sigma}}^{k-1}=\{s\in\Gamma(X_{K'},L^{\otimes D})\ |s_{|(V_{\sigma})_{k-1}}=0\},
	\end{equation}
for $k\in\bZ_{\geqslant 0}$, define a descending filtration of the vector space~$E_{D}$.

The map $\eta^k_{D,\widehat V_{\sigma}}$ restricted to $E^k_{D,\widehat V_{\sigma}}$ induces a $K'$-linear map
	\begin{equation}\label{morphisme evaluation 1 des conjugues} 
	\varphi^k_{D,\widehat V_{\sigma}}:E^k_{D,\widehat V_{\sigma}}\longrightarrow\Sym^k\left(\Omega^1_{\hat{V_{\sigma}}}\right)\otimes L^D_{P^{\sigma}}.\end{equation}

\begin{lemme}\label{comparaison hauteur morphisme evaluation en les conjugues} Let $k,D$ be non-negative integers. For every place~$v$ of~$K'$, for every embedding~$\sigma_1,\sigma_2$ de~$K(P)$ in~$K'$, we have
 \[h_v(\varphi^k_{D,\widehat V_{\sigma_2}})=h_{\sigma_1\sigma_2^{-1}v}(\varphi^k_{D,\widehat V_{\sigma_1}}).\]
\end{lemme}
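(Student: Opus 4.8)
The statement compares heights of two evaluation morphisms attached to two different embeddings $\sigma_1,\sigma_2:K(P)\hookrightarrow K'$, after twisting the place $v$ of $K'$ by the automorphism $\gamma:=\sigma_1\sigma_2^{-1}$. Since $K'/K$ is Galois and contains $K(P)$, the element $\gamma$ extends to an element of $\Gal(K'/K)$, which I will still denote $\gamma$; it satisfies $\gamma\circ\sigma_2=\sigma_1$. The key observation is that $\gamma$ induces a semilinear isomorphism of all the data: it carries the point $P^{\sigma_2}$ to $P^{\sigma_1}$, the formal subscheme $\widehat V_{\sigma_2}$ to $\widehat V_{\sigma_1}$ (because $\widehat V_{\sigma_i}$ is cut out by $I_{\widehat V}\otimes_{K(P),\sigma_i}K'$, and $\gamma$ sends the first ideal onto the second), hence the infinitesimal neighborhoods $(V_{\sigma_2})_k$ to $(V_{\sigma_1})_k$, and therefore the $K'$-vector spaces $E^k_{D,\widehat V_{\sigma_2}}$ to $E^k_{D,\widehat V_{\sigma_1}}$ and the target spaces $\Sym^k(\Omega^1_{\widehat V_{\sigma_2}})\otimes L^D_{P^{\sigma_2}}$ to $\Sym^k(\Omega^1_{\widehat V_{\sigma_1}})\otimes L^D_{P^{\sigma_1}}$, all $\gamma$-semilinearly and compatibly with the evaluation maps. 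Concretely, if one picks a parametrization of $\widehat V_{\sigma_2}$ by power series $f_i=\sum_I a_I(i)x^I$ with $a_I(i)\in K'$, then $\gamma\circ f_i=\sum_I\gamma(a_I(i))x^I$ parametrizes $\widehat V_{\sigma_1}$; this makes the intertwining completely explicit on Taylor coefficients.

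First I would set up this semilinear commuting square precisely: write $\Phi:E^k_{D,\widehat V_{\sigma_2}}\to E^k_{D,\widehat V_{\sigma_1}}$ and $\Psi:\Sym^k(\Omega^1_{\widehat V_{\sigma_2}})\otimes L^D_{P^{\sigma_2}}\to\Sym^k(\Omega^1_{\widehat V_{\sigma_1}})\otimes L^D_{P^{\sigma_1}}$ for the $\gamma$-semilinear bijections obtained from the action of $\gamma$ on $X_{K'}$ and $L$, and check $\varphi^k_{D,\widehat V_{\sigma_1}}\circ\Phi=\Psi\circ\varphi^k_{D,\widehat V_{\sigma_2}}$. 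This is the analogue, at the level of evaluation morphisms, of the coefficient computation already carried out in the proof of Lemma~\ref{conjugues a-analytiques}. Next, I would translate the effect of a $\gamma$-semilinear map on operator norms at a place. If $v$ is a place of $K'$ and $w=\gamma v$ (meaning $|x|_w=|\gamma^{-1}(x)|_v$, in the notation already used in the proof of Lemma~\ref{conjugues a-analytiques} where $\|\gamma(a)\|_{\fkp}=\|a\|_{\gamma^{-1}(\fkp)}$), then for the fixed integral/Hermitian structures — which are themselves defined over $\fko_K$ and hence $\gamma$-invariant — the $\gamma$-semilinear isomorphisms $\Phi,\Psi$ are isometries from the $v$-completion with its $v$-norm to the $w$-completion with its $w$-norm. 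Feeding this through the definition $h_v(\varphi)=\log\sup_{e\neq0}\|\varphi(e)\|_v/\|e\|_v$ and the intertwining identity yields $h_{w}(\varphi^k_{D,\widehat V_{\sigma_1}})=h_v(\varphi^k_{D,\widehat V_{\sigma_2}})$, which is exactly the claim once one identifies $w=\gamma v=\sigma_1\sigma_2^{-1}v$.

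The main obstacle, and the only point requiring care, is the bookkeeping of the integral and Hermitian structures under $\gamma$: one must be sure that the $\fko_K$-module $\mathscr E_D=\Gamma(\mathscr X,\mathscr L^{\otimes D})$, the section $\mathscr P$, the subbundle $\check\t_P\widehat V$, and all the chosen metrics are genuinely invariant (or get permuted among the Archimedean places in the compatible way) under the Galois action, so that $\Phi$ and $\Psi$ really are place-to-place isometries rather than merely bounded maps. Since all these models are fixed over $\Spec\fko_K$ and $\gamma$ acts $\fko_K$-linearly on $\fko_{K'}$, this invariance holds at every finite place; at Archimedean places it amounts to the standard fact that Galois conjugation permutes the embeddings $K'\hookrightarrow\bC$ and carries the sup-norm (hence its John norm) at $\sigma$ to the sup-norm at $\gamma\sigma$. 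Once this is in hand, the identity of heights at each place is immediate and the lemma follows; no estimate, only an equality, is involved.
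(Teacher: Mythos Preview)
Your argument is correct and follows essentially the same approach as the paper: choose the Galois element $\gamma$ carrying $\sigma_2$ to $\sigma_1$, observe that it intertwines the two evaluation morphisms via the conjugation identity $\varphi^k_{D,\widehat V_{\sigma_2}}=\gamma^{-1}\circ\varphi^k_{D,\widehat V_{\sigma_1}}\circ\gamma$ (equivalently your square $\varphi^k_{D,\widehat V_{\sigma_1}}\circ\Phi=\Psi\circ\varphi^k_{D,\widehat V_{\sigma_2}}$), and use that the integral and Hermitian structures are defined over $\fko_K$ so that $\gamma$ acts by isometries once one replaces the place $v$ by $\gamma v$. The only cosmetic difference is that the paper derives the equality as a pair of opposite inequalities on operator norms, whereas you invoke the isometry property directly to get equality in one step; the content is the same.
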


\begin{proof} Set $\gamma=\sigma_2\sigma_1^{-1}\in\Gal(K'/K(P))$. The formal subscheme $\widehat V_{\sigma_2}=\widehat V_{\gamma\sigma_1}$ is defined by the ideal~$I_{\widehat V_{\gamma\sigma_1}}=\gamma\left(I_{\widehat V_{\sigma_1}}\right)$. Then
	\begin{equation}\label{morphisme d'evaluation et conjugaison}
	\varphi^k_{D,\widehat V_{\sigma_2}}=\gamma \circ \varphi^k_{D,\widehat V_{\sigma_1}}\circ \gamma^{-1}.
	\end{equation}
Let~$s\in E^k_{D,\widehat V_{\sigma_2}}$, let~$\fkp$ be a maximal ideal of $\fko_{K'}$. Then
	\begin{align*}
	\| \varphi^k_{D,\widehat V_{\sigma_2}}(s)\|_{\fkp}&=\| \gamma \circ \varphi^k_{D,\widehat V_{\sigma_1}}\circ\gamma^{-1}(s))\|_{\fkp}\\
	&=\| \gamma \circ \varphi^k_{D,\widehat V_{\sigma_1}}(\gamma^{-1}(s))\|_{\gamma(\gamma^{-1}\fkp)}\\
	&=\|\varphi^k_{D,\widehat V_{\sigma_1}}(\gamma^{-1}(s))\|_{\gamma^{-1}\fkp}\\
	&\leqslant \| \varphi^k_{D,\widehat V_{\sigma_1}}\|_{\gamma^{-1}\fkp}\ \|s\|_{\fkp}.
	\end{align*}
We proved the following inequality
	\[\|\varphi^k_{D,\widehat V_{\sigma_2}}\|_{\fkp}\leqslant \|\varphi^k_{D,\widehat V_{\sigma_1}}\|_{\gamma^{-1}\fkp}.\]
Applying this inequality to~$\sigma_2$ and $\sigma_1=\gamma^{-1}\sigma_2$ instead of $\sigma_1$ and $\sigma_2$ at the place~$\gamma^{-1}v$ instead of the place~$v$, we get the inequality
	\begin{align*}
	\|\varphi^k_{D,\widehat V_{\sigma_1}}\|_{\gamma^{-1}v}&\leqslant \|\varphi^k_{D,\widehat V_{\sigma_2}}\|_{\gamma(\gamma^{-1}v)} \leqslant \|\varphi^k_{D,\widehat V_{\sigma_2}}\|_v.
	\end{align*}
If~$v$ is an Archimedean place of~$K'$, then
		\begin{align*}
	\| \varphi^k_{D,\widehat V_{\sigma_2}}(s)\|_{v}&=\| \gamma \circ \varphi^k_{D,\widehat V_{\sigma_1}}\circ \gamma^{-1}(s))\|_{v}=\| \varphi^k_{D,\widehat V_{\sigma_1}}(\gamma^{-1}(s))\|_{v\circ\gamma}\\
	&\leqslant \| \varphi^k_{D,\widehat V_{\sigma_1}}\|_{v\circ\gamma}\ \|\gamma^{-1}(s)\|_{v\circ\gamma}
	\leqslant \| \varphi^k_{D,\widehat V_{\sigma_1}}\|_{v\circ\gamma}\ \|s\|_{v}.
	\end{align*}
Hence, $\|\varphi^k_{D,\widehat V_{\sigma_2}}\|_{v}\leqslant \|\varphi^k_{D,\widehat V_{\sigma_1}}\|_{v\circ\gamma}$, and also $\|\varphi^k_{D,\widehat V_{\sigma_1}}\|_{v\circ\gamma}\leqslant \|\varphi^k_{D,\widehat V_{\sigma_2}}\|_{v}$.
\end{proof}

We define the $\alpha$-arithmeticity of~$\widehat V$ as we did for the $\alpha$-analyticity.

\begin{definition}\label{def alpha-arithmetique point ferme}
Let $X$ be a projective variety defined over a number field~$K$ and let~$P$ be a closed point of~$X$. Let~$\widehat V$ be a smooth formal subscheme of the formal completion~$\widehat X_P$. Let~$K'$ be a Galois extension of~$K$ containing the residue field~$K(P)$ of~$P$. Let~$\alpha$ be a non-negative real number. The formal subscheme~$\widehat V$ is said to be \emph{$\alpha$-arithmetic} if for every embedding $\sigma:K(P)\hookrightarrow K'$, $\widehat V_{\sigma}$ is $\alpha$-arithmetic.
\end{definition}

\begin{remarque} The existence of one Galois extension of~$K$ containing~$K(P)$ and one embedding $\sigma$ of $K(P)$ in this extension such that $\widehat V_{\sigma}$ is $\alpha$-arithmetic is a sufficient condition for $\widehat V$ to be $\alpha$-arithmetic. Indeed, it follows from Lemma~\ref{alpha-arithmetique independant du corps} that this definition does not depend on the choice of a Galois extension~$K'$ of~$K$ containing~$K(P)$.

Moreover, et~$\alpha$ be a non-negative real number. It follows from Lemma~\ref{comparaison hauteur morphisme evaluation en les conjugues} that if one of the formal subschemes~$\widehat V_{\sigma}$ is $\alpha$-arithmetic, then they all are. Indeed, let~$\sigma$ be an embedding of~$K(P)$ in~$K'$. Assume that $\widehat V_{\sigma}$ is $\alpha$-arithmetic and let $\ol\alpha$ be a real number bigger than~$\alpha$. By definition, for every finite subset~$S$ of places of~$K'$, there exists a positive real number~$C_{S}$ such that
	\[\frac{1}{[K':\mathbf Q]}\sum_{v\in\Sigma_{K'}\setminus S}\log \left\|\varphi_{D,\sigma}^k\right\|_{v}\leqslant \ol\alpha k\log k+C_S(k+D).\]
Let $\gamma\in\Gal(K'/K(P))$, and~$S$ be a finite subset of places of~$K'$ and let~$k,D$ be two non-negative integers. Then, thanks to Lemma~\ref{comparaison hauteur morphisme evaluation en les conjugues},
	\begin{align*}
	\frac{1}{[K':\mathbf Q]}\sum_{v\in\Sigma_{K'}\setminus S} h_{v}(\varphi_{D,\widehat V_{\gamma\sigma}}^k)&=  \frac{1}{[K':\mathbf Q]}\sum_{v\in\Sigma_{K'}\setminus S} h_{\gamma^{-1}v}(\varphi_{D,\widehat V_{\sigma}}^k)\\
	&= \frac{1}{[K':\mathbf Q]}\sum_{v\in\Sigma_{K'}\setminus \gamma^{-1}S} h_{v}(\varphi_{D,\widehat V_{\sigma}}^k)\\
	&\leqslant \ol\alpha k\log k +C_{\gamma^{-1}S}(k+D),
	\end{align*}
because $\widehat V_{\sigma}$ is $\alpha$-arithmetic. Hence, $\widehat V_{\gamma\sigma}$ is also $\alpha$-arithmetic.

\end{remarque}

\begin{prop}
Let $X$ be a projective variety defined over a number field~$K$ and let~$P$ be a closed point of~$X$. Let~$\widehat V$ be a smooth formal subscheme of~$\widehat X_P$. Let~$\alpha$ be a non-negative real number.
If the formal subscheme~$\widehat V$ is $\alpha$-analytic, then it is $\alpha$-arithmetic.
\end{prop}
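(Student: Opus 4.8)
The plan is to reduce the statement immediately to the rational‑point case already settled in Proposition~\ref{alpha analytique arithmetique}. First I would unwind the hypothesis: by Definition~\ref{def alpha-analytique point fermé}, $\alpha$‑analyticity of $\widehat V$ means there is a finite Galois extension $K'/K$ containing the residue field $K(P)$ such that $\widehat V$ is $\alpha$‑analytic over $K'$ in the sense of Definition~\ref{def alpha-analytique sur K}, i.e.\ for \emph{every} embedding $\sigma\colon K(P)\hookrightarrow K'$ the component $\widehat V_\sigma$ of the decomposition $\widehat V\otimes_K K'=\bigsqcup_\sigma\widehat V_\sigma$ is $\alpha$‑analytic in the sense of Section~\ref{alpha analytique} (the notion for a rational point).

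Next I would check that this really puts us in the rational‑point setting. Since we work in characteristic zero, $K(P)/K$ is separable, so base change to $K'\supseteq K(P)$ splits the closed point $P$ into $[K(P):K]$ distinct $K'$‑rational points $P^\sigma$ of $X_{K'}$, indexed exactly by the embeddings $\sigma$, and $\widehat V_\sigma$ is a smooth formal subscheme of the formal completion of $X_{K'}$ at the $K'$‑rational point $P^\sigma$. Proposition~\ref{alpha analytique arithmetique}, applied verbatim over the number field $K'$, then gives that each $\widehat V_\sigma$ is $\alpha$‑arithmetic.

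Finally, by Definition~\ref{def alpha-arithmetique point ferme} — whose independence of the chosen Galois extension is recorded in the Remark preceding this Proposition — the fact that all the $\widehat V_\sigma$ are $\alpha$‑arithmetic is precisely the assertion that $\widehat V$ is $\alpha$‑arithmetic. I do not expect any real obstacle: the analytic and combinatorial work is entirely inside Proposition~\ref{alpha analytique arithmetique}, and the only point to verify is that passing to the decomposition $\widehat V\otimes_K K'=\bigsqcup_\sigma\widehat V_\sigma$ translates the $\alpha$‑analyticity hypothesis (Definition~\ref{def alpha-analytique sur K}) and the $\alpha$‑arithmeticity conclusion (Definition~\ref{def alpha-arithmetique point ferme}) faithfully into the rational‑point notions, which is immediate from the definitions.
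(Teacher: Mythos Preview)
Your proposal is correct and follows exactly the paper's own argument: unwind Definition~\ref{def alpha-analytique point fermé} to obtain a Galois extension $K'$ over which every component $\widehat V_\sigma$ is $\alpha$-analytic, apply Proposition~\ref{alpha analytique arithmetique} over~$K'$ to deduce each $\widehat V_\sigma$ is $\alpha$-arithmetic, and conclude via Definition~\ref{def alpha-arithmetique point ferme}. Your additional remark about base change splitting $P$ into $K'$-rational points is a helpful clarification but not strictly needed.
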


\begin{proof}
If $\widehat V$ is $\alpha$-arithmetic, there exists a Galois extension~$K'$ of~$K$ containing~$K(P)$ such that, for every $\sigma:K(P)\hookrightarrow K'$, $\widehat V_{\sigma}$ is $\alpha$-analytic. From Proposition~\ref{alpha analytique arithmetique}, the $\widehat V_{\sigma}$ are also $\alpha$-arithmetic, and $\widehat V$ is by definition $\alpha$-arithmetic.
\end{proof}

\section{Uniformization and order of growth}\label{section uniformization}

Before we give the statement of the main Theorem~\ref{TH SL DEGRES}, we define a notion of \emph{uniformization} of formal subschemes defined at closed points of~$X$ by a holomorphic function on an affine curve~$M_0$ over~$\bC$ or~$\bC_p$. This notion extends that of parametrization by meromorphic functions on the affine line over~$\bC$. If $M$ is the projective compactification of~$M_0$, we also define the \emph{order of growth} of such a holomorphic function on~$M_0$ at every point of~$M\setminus M_0$. This notion generalizes the notion of exponential order of growth of a holomorphic function on~$\bC$.


\subsection{Order of growth}\label{section ordre de croissance}

Let $\fkF$ be a complete, algebraically closed valued field. The cases we will be interested in are $\fkF=\bC$ and $\fkF=\bC_p$.  Let~$X$ be a projective variety over~$\fkF$, $M$ an algebraic curve over~$\fkF$, and~$P$ a point of~$M$. Let~$\ol L$ be a metrized line bundle on~$X$; assume $L$ to be ample.

Let~$T$ be a finite subset of~$M$. The affine curve $M\setminus T$ is a Stein space (see \citep{grauert_remmert_steinspaces, kiehl_nichtarchimedischen}) and therefore there always exists a global section $\Gamma(M\setminus T,\Theta^*(L^{-1}))$ which is not the zero section.

\begin{definition}\label{def ordre de croissance}
Let~$T$ be a non-empty finite subset of~$M(\fkF)$. Let~$\rho=(\rho_{\tau})_{\tau\in T}$ be a family of non-negative real numbers. A holomorphic map $\Theta: M\setminus T\rightarrow X(\fkF)$ is \emph{of order at most $\rho$} with respect to~$T$ if there exists a non-zero global section \mbox{$\eta\in\Gamma(M\setminus T,\Theta^*(L^{-1}))$} such that, for every~$\tau\in T$, if $u_{\tau}$ is a local parameter of~$M$ at~$\tau$, there exist positive real numbers $A_1$, $A_2$ such that
	\begin{equation}\label{ordre de croissance}
	\|\eta(z)\|\leqslant A_1\exp\left(A_2|u_{\tau}(z)|^{-\rho_{\tau}}\right)\ \ \mbox{ for all $z$ sufficiently close to~$\tau$}.
	\end{equation}
\end{definition}

When~$\fkF=\bC$, the map is holomorphic in the sense of complex analytic geometry. When~$\fkF=\bC_p$, it is holomorphic in the sense of rigid analytic geometry (see~\citep{bosch-guntzer-remmert} and \citep{Fresnel-vanderPut}).

\begin{remarque} This definition does not depend on the choice of a local parameter at a point of~$T$. 
\end{remarque}

\begin{lemme}
We use the same notation as in Definition~\ref{def ordre de croissance}. Let $\ol L_1$ and $\ol L_2$ be two ample metrized line bundles on~$X$ and let~$\eta$ be a global section of $\Theta^*L_1^{-1}$ satisfying Inequality~\eqref{ordre de croissance} of the previous definition. Then there exists a non-zero section~$\eta'\in\Gamma(M\setminus T,\Theta^*(L_2^{-1}))$ also satisfying~\eqref{ordre de croissance}.
\end{lemme}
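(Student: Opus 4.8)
The statement compares two notions of "order at most $\rho$" defined by different ample line bundles $\ol L_1$ and $\ol L_2$. The key observation is that the order of growth is a condition about the behaviour of $\|\eta(z)\|$ near the finitely many points of $T$, and near each $\tau\in T$ the curve $M\setminus T$ looks like a punctured disk; so this is essentially a statement about comparing two metrized line bundles on a projective curve, which differ by a bounded factor away from $T$ but may have poles/zeros at the points of $T$. First I would reduce to producing, from the given section $\eta\in\Gamma(M\setminus T,\Theta^*L_1^{-1})$, a new section $\eta'\in\Gamma(M\setminus T,\Theta^*L_2^{-1})$ by multiplying by a suitable section of $\Theta^*(L_1\otimes L_2^{-1})$ on $M\setminus T$. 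Since $M\setminus T$ is a Stein space (as recalled just before Definition~\ref{def ordre de croissance}) and $L_1,L_2$ are ample, such global sections exist; but one must control the behaviour of the chosen section near each $\tau$.

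**Key steps.** Step 1: write $\eta'=\eta\cdot\Theta^*\sigma$ where $\sigma$ is a nonzero global section of $L_1\otimes L_2^{-1}$ over a suitable open set, or more robustly, pull back to $M\setminus T$ a nonzero section $g\in\Gamma(M\setminus T,\Theta^*(L_1\otimes L_2^{-1}))$, which exists by Steinness. Step 2: estimate $\|g(z)\|$ near each $\tau\in T$: since $g$ is meromorphic along $M$ with poles only at the points of $T$, in a local parameter $u_\tau$ one has a bound $\|g(z)\|\leqslant B_1 |u_\tau(z)|^{-N}$ for some integer $N$ and constant $B_1$, for $z$ near $\tau$ — this is a polynomial (in fact power-of-$|u_\tau|^{-1}$) bound, hence dominated by $\exp(B_2|u_\tau(z)|^{-\rho_\tau})$ as soon as $\rho_\tau>0$, and absorbed into the constants if $\rho_\tau=0$ provided one chooses $g$ holomorphic at the $\tau$'s with $\rho_\tau=0$ (which can be arranged by tensoring with a high power of an ample bundle that is effective away from those points, or simply because $L_1\otimes L_2^{-1}$ can be written as a difference of very ample bundles). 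Step 3: combine with $\|\eta(z)\|\leqslant A_1\exp(A_2|u_\tau(z)|^{-\rho_\tau})$ to get $\|\eta'(z)\|=\|\eta(z)\|\,\|g(z)\|\leqslant A_1'\exp(A_2'|u_\tau(z)|^{-\rho_\tau})$ near each $\tau$, which is exactly Inequality~\eqref{ordre de croissance} for $\ol L_2$. One also checks $\eta'$ is not identically zero, which is clear since $\eta\neq 0$ and $g\neq 0$ on the integral curve $M\setminus T$ (or rather on each component; if $M$ is reducible one works component by component).

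**Main obstacle.** The delicate point is the case $\rho_\tau=0$: there the target bound $A_1\exp(A_2|u_\tau|^{-0})=A_1 e^{A_2}$ is just a constant, so one is not allowed a genuine pole of $g$ at such a $\tau$ — the comparison section must be holomorphic and nonvanishing (in norm, bounded above and below) near those $\tau$. This forces a careful choice of the auxiliary section $g$: one should pick it so that its divisor on $M$ is supported on $\{\tau\in T:\rho_\tau>0\}$, which is possible by choosing appropriate sections of large powers of ample bundles realizing $L_1\otimes L_2^{-1}$, and then invoking Steinness of $M\setminus\{\tau:\rho_\tau>0\}$ if needed. Apart from this bookkeeping, the argument is a routine "bounded metrics differ by polynomially-controlled factors" comparison, and no new idea beyond the standard Arakelov-geometry toolbox is required.

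\begin{proof}
By Steinness of the affine curve $M\setminus T$ (for the Archimedean case see \citep{grauert_remmert_steinspaces}, for the non-Archimedean case \citep{kiehl_nichtarchimedischen}) and ampleness of $L_1$ and $L_2$, the line bundle $\Theta^*(L_1\otimes L_2^{-1})$ on $M\setminus T$ admits a nonzero global section, call it $g$; we may and do choose it so that, viewing $g$ as a rational section of $L_1\otimes L_2^{-1}$ along the projective curve $M$, its divisor is supported on $\{\tau\in T:\rho_\tau>0\}$ (possible by writing $L_1\otimes L_2^{-1}$ as a difference of very ample line bundles on $M$ and choosing sections accordingly). Set $\eta'=\eta\cdot\Theta^*g$, which is a nonzero global section of $\Theta^*L_2^{-1}$ over $M\setminus T$ since $\eta$ and $g$ are nonzero and $M\setminus T$ is integral on each component.

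It remains to verify Inequality~\eqref{ordre de croissance} for $\eta'$ and $\ol L_2$ near each $\tau\in T$. Fix a local parameter $u_\tau$ of~$M$ at~$\tau$. If $\rho_\tau=0$, then by our choice $g$ is holomorphic and nonvanishing near~$\tau$, so $\|\Theta^*g(z)\|$ is bounded above by a constant $B_\tau$ for $z$ sufficiently close to~$\tau$; combined with $\|\eta(z)\|\leqslant A_1\exp(A_2|u_\tau(z)|^{-\rho_\tau})=A_1 e^{A_2}$ we get $\|\eta'(z)\|\leqslant A_1 B_\tau e^{A_2}$, which is of the required form with exponent $\rho_\tau=0$. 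If $\rho_\tau>0$, then $g$ has at worst a pole of some order $N\geqslant 0$ at~$\tau$, so $\|\Theta^*g(z)\|\leqslant B_1|u_\tau(z)|^{-N}$ for $z$ near~$\tau$; since $|u_\tau(z)|^{-N}\leqslant B_2\exp\bigl(|u_\tau(z)|^{-\rho_\tau}\bigr)$ for $z$ sufficiently close to~$\tau$ (as $\rho_\tau>0$), we obtain
\[
\|\eta'(z)\|=\|\eta(z)\|\,\|\Theta^*g(z)\|\leqslant A_1 B_1 B_2\exp\bigl((A_2+1)|u_\tau(z)|^{-\rho_\tau}\bigr)
\]
for all $z$ sufficiently close to~$\tau$, which is Inequality~\eqref{ordre de croissance} for $\eta'$ and $\ol L_2$ with suitable constants. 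As $\tau\in T$ was arbitrary, $\eta'$ witnesses that $\Theta$ is of order at most~$\rho$ with respect to~$T$ for the line bundle $\ol L_2$.
\end{proof}
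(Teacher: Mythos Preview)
Your overall strategy---multiply $\eta$ by a section of the ``difference'' bundle---is the right one, but the execution has a genuine gap, and the paper's proof shows a much cleaner route.

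The gap is in the construction and control of $g$. You take $g$ to be a global section of $\Theta^*(L_1\otimes L_2^{-1})$ on the Stein space $M\setminus T$, then assert that $g$ can be chosen so that, ``viewing $g$ as a rational section of $L_1\otimes L_2^{-1}$ along the projective curve $M$, its divisor is supported on $\{\tau:\rho_\tau>0\}$.'' This sentence conflates $X$ and $M$: the bundle $L_1\otimes L_2^{-1}$ lives on $X$, not on $M$, and its pullback $\Theta^*(L_1\otimes L_2^{-1})$ is only a \emph{holomorphic} line bundle on $M\setminus T$, with no a priori extension to $M$ (since $\Theta$ is only holomorphic on $M\setminus T$ and may have essential singularities at the points of $T$). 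An arbitrary holomorphic section on a Stein space need not extend meromorphically, so there is no reason for your bound $\|g(z)\|\leqslant B_1|u_\tau(z)|^{-N}$ to hold. Your parenthetical about writing $L_1\otimes L_2^{-1}$ ``as a difference of very ample line bundles on $M$'' compounds the confusion. Even if you tried instead to pull back a rational section of $L_1\otimes L_2^{-1}$ from $X$, its norm near $\tau$ would depend on how $\Theta(z)$ approaches the polar divisor in $X$, over which you have no control. (Also, the notation $\Theta^*g$ is off: $g$ already lives on $M\setminus T$.)

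The paper sidesteps all of this in one line. Since $L_1$ is ample, there is a positive integer $N$ such that $L_1^{\otimes N}\otimes L_2^{-1}$ admits a nonzero \emph{global regular} section $f$ on the projective variety $X$; then $\|f\|$ is bounded on $X$, so $\|\Theta^*f(z)\|\leqslant C$ uniformly on $M\setminus T$, regardless of how $\Theta$ behaves near $T$. Setting $\eta'=\eta^N\cdot\Theta^*f\in\Gamma(M\setminus T,\Theta^*L_2^{-1})$ gives immediately
\[
\|\eta'(z)\|\leqslant C\,\|\eta(z)\|^N\leqslant CA_1^N\exp\bigl(NA_2|u_\tau(z)|^{-\rho_\tau}\bigr),
\]
which is the required estimate. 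Note in particular that the case $\rho_\tau=0$ you flagged as the main obstacle is no obstacle at all here: the comparison factor is globally bounded, not merely polynomially bounded.
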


\begin{proof} Let~$N$ be a positive integer such that~$L_1^N\otimes L_2^{-1}$ admits a non-zero global section~$f$ (such an integer does exist  because $L_1$ is ample). Then the section $\eta'=\eta^N\Theta^*f\in\Gamma(M\setminus T,\Theta^*L_2^{-1})$ is suitable.
\end{proof}

\begin{lemme}\label{ordre de croissance d'une section}
Let $\Theta:M\setminus T \rightarrow X(\fkF)$ be holomorphic of order at most~$\rho_{\tau}$ at~$\tau\in T$ and let $P_1,\dots, P_m\in M\setminus T$. We may choose a section~$\eta$ satisfying~\eqref{ordre de croissance} which does not vanish at the points $P_1,\dots,P_m$.
\end{lemme}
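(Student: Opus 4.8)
The statement to be proved (Lemma~\ref{ordre de croissance d'une section}) says that if $\Theta : M\setminus T\to X(\fkF)$ is holomorphic of order at most $\rho_\tau$ at each $\tau\in T$, then among the sections $\eta\in\Gamma(M\setminus T,\Theta^*(L^{-1}))$ satisfying the growth inequality~\eqref{ordre de croissance} one can find one that does not vanish at a prescribed finite set $P_1,\dots,P_m\in M\setminus T$. The plan is to start from some section $\eta_0$ witnessing the order-of-growth hypothesis (its existence is exactly the content of Definition~\ref{def ordre de croissance}) and to correct it by multiplying by a suitable nowhere-vanishing-on-$T$-neighborhoods holomorphic function on $M\setminus T$ that is nonzero at the $P_i$'s; the point is that this correction factor must not spoil the growth bound near the punctures $\tau$.

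First I would fix a nonzero $\eta_0\in\Gamma(M\setminus T,\Theta^*(L^{-1}))$ satisfying, for each $\tau\in T$, an inequality $\|\eta_0(z)\|\leqslant A_1\exp(A_2|u_\tau(z)|^{-\rho_\tau})$ near $\tau$. If $\eta_0(P_i)\neq 0$ for all $i$, we are done. Otherwise, let $Z$ be the (finite, since $M\setminus T$ minus the zero divisor of $\eta_0$ is an affine curve and $\eta_0\not\equiv 0$) set of points among $P_1,\dots,P_m$ at which $\eta_0$ vanishes, together with their multiplicities. The idea is to produce a holomorphic function $g$ on $M\setminus T$ with prescribed zeros exactly matching the vanishing of $\eta_0$ at these points — more precisely, I want $1/g$ to have, at each $P_i\in Z$, a pole of order equal to $\mathrm{ord}_{P_i}(\eta_0)$ — so that $\eta_0/g$ extends to a section that is nonzero at every $P_i$, while $g$ itself is chosen to grow slowly (polynomially, say) near each $\tau\in T$, which certainly keeps $\exp(A_2|u_\tau|^{-\rho_\tau})$-type bounds intact (possibly after enlarging $A_1,A_2$). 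Concretely: since $M\setminus T$ is Stein (complex case \citep{grauert_remmert_steinspaces}, rigid case \citep{kiehl_nichtarchimedischen}), principal divisors are flexible; alternatively, choose a rational function $h$ on the projective curve $M$ whose divisor on $M\setminus T$ contains $-\sum_i \mathrm{ord}_{P_i}(\eta_0)[P_i]$ with equality at the $P_i$ and is otherwise supported in $T$, and set $\eta'=\eta_0\cdot h$; then $\eta'$ is still a section of $\Theta^*(L^{-1})$ over $M\setminus T$ (multiplying a section of a line bundle by a function regular away from $T$ stays a section away from $T$), it does not vanish at any $P_i$, and near $\tau\in T$ the extra factor $|h(z)|$ is bounded by $|u_\tau(z)|^{-N_\tau}$ for some integer $N_\tau$, hence $\|\eta'(z)\|\leqslant A_1'|u_\tau(z)|^{-N_\tau}\exp(A_2|u_\tau(z)|^{-\rho_\tau})\leqslant A_1''\exp(A_2'|u_\tau(z)|^{-\rho_\tau})$ for suitable constants, so~\eqref{ordre de croissance} still holds.

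The step I expect to be the main obstacle is the construction of the correction function $h$ with exactly the required local behaviour at the finitely many $P_i$ while having poles only allowed at points of $T$: one must be careful that $h$ does not accidentally introduce new zeros of $\eta'=\eta_0 h$ at other points of the (already finite) zero locus of $\eta_0$ among the $P_i$, and — in the rigid-analytic case — one must invoke the correct form of the Stein/affinoid exactness of the divisor sequence rather than the complex Mittag-Leffler/Weierstrass construction. A clean way around these subtleties is to avoid matching multiplicities exactly and instead argue by a genericity/pigeonhole argument: the space of sections of $\Theta^*(L^{\otimes -N})$ over $M\setminus T$ satisfying the analogue of~\eqref{ordre de croissance} (for the order $\rho$, which is unchanged under tensoring by a fixed bundle, as in the previous lemma) is an $\fkF$-vector space stable under multiplication by the coordinate ring of the affine curve $M\setminus T$; since this ring separates the finitely many points $P_1,\dots,P_m$ and $\eta_0$ is not identically zero, a standard linear-algebra argument (a finite union of proper linear subspaces cannot cover the whole space, over the infinite field $\fkF$) gives a combination $\eta'$ of translates $f\cdot\eta_0$, $f$ in that ring, with $\eta'(P_i)\neq 0$ for all $i$, still satisfying the growth estimate because each $f$ is regular — hence bounded — near every $\tau\in T$. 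I would write the proof in this second form, as it sidesteps the delicate explicit divisor construction and works uniformly in the Archimedean and non-Archimedean settings.
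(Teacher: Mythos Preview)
Your first sketch is essentially the paper's proof: fix a section $\eta_0$ satisfying~\eqref{ordre de croissance}, let $n_i=\mathrm{ord}_{P_i}(\eta_0)$, and use Riemann--Roch on the divisor $\Delta=\sum_{\tau\in T}n[\tau]+\sum_i n_i[P_i]$ (for $n$ large enough that $\deg\Delta\geqslant 2g$) to find a rational function $f$ with a pole of order exactly $n_i$ at each $P_i$ and poles of order at most $n$ at each $\tau$; then $\tilde\eta=f\eta_0$ is holomorphic on $M\setminus T$, nonzero at each $P_i$, and the polynomial blow-up $|u_\tau|^{-n}$ near $\tau$ is absorbed into the exponential after enlarging $A_2$. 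Your worry about $f$ introducing unwanted zeros at some other $P_j$ is unfounded: the Riemann--Roch argument picks $f$ outside \emph{all} the hyperplanes $H^0(M,\Osheaf(\Delta-[P_i]))$ simultaneously (a finite union of hyperplanes over the infinite field $\fkF$), so $f$ has a genuine pole at every $P_j$.

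The second approach, however---the one you say you would actually write---has a genuine gap. If $\eta_0(P_i)=0$, then for \emph{every} $f$ in the coordinate ring of $M\setminus T$ one has $(f\eta_0)(P_i)=f(P_i)\,\eta_0(P_i)=0$; any $\fkF$-linear combination of such products is again of the form $g\eta_0$ and still vanishes at $P_i$. Multiplying a section by regular functions can only add zeros, never remove them, so no genericity argument inside the cyclic module $\{f\eta_0:f\in\Osheaf(M\setminus T)\}$ can produce a section nonzero at $P_i$. (A side remark: regular functions on the affine curve $M\setminus T$ typically have poles at the points of $T$, so they are not ``bounded'' near $\tau$ as you claim; their polynomial growth is still absorbed by the exponential, but the stated reason is wrong.) To kill the zeros of $\eta_0$ at the $P_i$ you must multiply by something with \emph{poles} there, which forces you back to the rational-function construction of your first sketch---exactly what the paper does.
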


\begin{proof} We are going to construct a function~$f$, meromorphic on~$M$, holomorphic on $M\setminus (T\cup \{P_1,\dots, P_m\})$, having poles of order exactly~$n_i$ at~$P_i$ and poles of controlled order at the points of~$T$. Then the section $\tilde{\eta}=f\eta$ does not vanish at $P_1,\dots,P_m$ and is holomorphic at~\mbox{$M\setminus T$}; it will only remain to observe that it still satisfies Inequality~\eqref{ordre de croissance}.

If~$\Delta$ is a divisor on~$M$, denote by~$h^0(\Delta)$ the dimension over~$\fkF$ of the vector space of sections~$H^0(M,\Osheaf_M(\Delta))$. 
Denote by~$K_M$ a canonical divisor on~$M$.

Let $\eta\in\Gamma(M\setminus T,\Theta^*L^{-1})$ satisfying~\eqref{ordre de croissance}. Let $n_i$ be the order of vanishing of~$\eta$ at~$P_i$. Let $n\in\bZ_{\geqslant 0}$ and define $\Delta$ the divisor $\Delta=\sum_{\tau\in T}n[\tau]+\sum n_i[P_i]$. The Riemann-Roch theorem says that
	\[h^0(\Delta)-h^0(K_M-\Delta)=\deg \Delta+1-g, \]
and	 
	\[h^0(\Delta-[P_i])-h^0(K_M-\Delta+[P_i])=\deg \Delta-g,\]
where $g$ is the genus of~$M$. If $\deg \Delta\geqslant 2g$, 
	\[h^0(K_M-\Delta)=h^0(K_M-\Delta+[P_i])=0,\]
and therefore $H^0(M,\Osheaf(\Delta-[P_i]))$ is a hyperplane of $H^0(M,\Osheaf(\Delta)).$ As the field~$\fkF$ is infinite, there exists a function~$f\in H^0(M,\Osheaf(\Delta))$ which does not belong to any of those hyperplanes 
	\[H^0(M,\Osheaf(\Delta-[P_1])),\dots,H^0(M,\Osheaf(\Delta-[P_m])).\]
The function~$f$ is meromorphic on~$M$, holomorphic on $M\setminus (T\cup \{P_1,\dots, P_m\})$ and has poles of order exactly~$n_i$ at~$P_i$ of order at most~$n$ at every~$\tau\in T$.

The section $\tilde{\eta}=f\eta$ does not vanish at $P_1,\dots,P_m$ and is holomorphic on~\mbox{$M\setminus T$}. Let $\tau\in T$ and let~$u_{\tau}$ be a local parameter of~$M$ at~$\tau$. Since $f$ has a pole of order at most~$n$ at~$\tau$, there exist $A_3>0$ and a neighborhood~$U$ of~$\tau$ in~$M$ such that for every~$z\in U$,
$f(z)\leqslant A_3|u_{\tau}(z)|^{-n}$. Hence for every~$z$ sufficiently close to~$\tau$,
	\begin{align*}
	\|\tilde\eta(z)\|&\leqslant A_1A_3\exp\left(A_2|u_{\tau}(z)|^{-\rho_{\tau}}\right)|u_{\tau}(z)|^{-n}.
	\end{align*}
Let~$A_2'>A_2$. Then, when $x$ goes to~$\infty$ ($x$ real), $\exp\left({A_2x^{\rho_{\tau}}}\right)x^{n}=o\left(\exp\left({A_2'x^{\rho_{\tau}}}\right)\right)$ so there exists $A_4>0$ such that
	\begin{align*}
	\|\tilde\eta(z)\|&\leqslant A_4\exp\left(A_2'|u_{\tau}(z)|^{-\rho_{\tau}}\right).
	\end{align*}	
The section $\tilde\eta$ satisfies, like~$\eta$, an inequality which is similar to~\eqref{ordre de croissance} and does not vanish at $P_1,\dots,P_m$.
\end{proof}

\subsection{Uniformization of finite order}\label{enonce SL degres}

Let $X$ be a projective variety over~$\bQ$ and let $x_1,\dots,x_m$ be closed points of~$X$. For every $j\in\{1,\dots,m\}$, denote by~$K_j=\bQ(x_j)$ the residue field of~$x_j$, by~$d_j$ its degree over~$\bQ$ and let~$\widehat V_j$ be a smooth $K_j$-subscheme of dimension~$1$ of the formal completion~$\widehat X_{x_j}$ of~$X$ at~$x_j$. Let~$L$ be an ample line bundle on~$X$. Let~$p_0$ be a given place of~$\bQ$, finite or Archimedean. 

\begin{definition}\label{uniformisation}
The family of formal subschemes $(\widehat V_1,\dots,\widehat V_m)$ admits a \emph{uniformization at the place~$p_0$} if  there exist an affine, smooth, connected curve~$M_0$ over~$\bC_{p_0}$, a holomorphic map
\[\Theta:M_0\rightarrow X(\bC_{p_0})\]
and distinct points $w_1,\dots,w_m$ of~$M_0$ such that $\Theta(w_j) = \xi_j$, where~$\xi_j\in X(\bC_{p_0})$ is a geometric point lying above the closed point~$x_j$, and the germ of formal curve parameterized by~$\Theta$ at~$\xi_j$ coincides with~$\widehat V_j$. 
\end{definition}

\begin{definition}\label{uniformisation ordre}
We denote by~$M$ the smooth projective compactification of~$M_0$ and by~$T$ the (finite) complement of~$M_0$ in~$M$, so that $M_0=M\setminus T$. Let $\rho$ be a non-negative real number. We say that such a uniformization is \emph{of order at most $\rho$} if there exists a family $(\rho_{\tau})_{\tau\in T}$ of non-negative real numbers satisfying $\sum_{\tau}\rho_{\tau}\leqslant \rho$ such that the holomorphic map $\Theta: M\setminus T\rightarrow X(\bC_{p_0})$ is of order at most $(\rho_{\tau})_{\tau}$ with respect to~$T$.
\end{definition}

Assume that the formal subscheme~$\widehat V=\cup_{j=1}^m\widehat V_j$ admits a uniformization at this place~$p_0$. For all $j\in\{1,\dots,m\}$, 
the geometric point~$\xi_j$ defines a morphism from~$\Osheaf_{X,x_j}$ to~$\bC_{p_0}$ whose kernel is the maximal ideal~$\fkm_{x_j}$, and hence which can be factorized to give an embedding~$\sigma_j$ of $\Osheaf_{X,x_j}/\fkm_{x_j}=K_j$ in~$\bC_{p_0}$.
\section{Proof of the main Theorem}

\subsection{Statement}\label{section enonce}

The \emph{Zariski closure} of $\widehat V=\bigcup_{j=1}^m \widehat V_j$ in $X$ is by definition the smallest Zariski-closed subset $Y$ of $X$ (defined over~$\bQ$) such that, for every $j\in\{1,\dots,m\},$ $\widehat V_j\subseteq \widehat Y_{x_j}.$ The formal subscheme $\widehat V$ is said to be \emph{algebraic} if its dimension (here,~1) is equal to the dimension of its closure.

We can now state the following geometrical version of the Schneider-Lang theorem on an affine curve.

\begin{theo}\label{TH SL DEGRES}
Let $X$ be a projective variety defined over~$\bQ$ and let $x_1,\dots,x_m$ be closed points of~$X$. For all $j\in\{1,\dots,m\}$, denote by~$K_j=\bQ(x_j)$ the residue field of~$x_j$ and by~$d_j$ its degree over~$\bQ$. Let $\alpha_1,\dots,\alpha_m$ be non-negative real numbers. For every $j\in\{1,\dots,m\}$, let~$\widehat V_j$ be a smooth $\alpha_j$-arithmetic $K_j$-subscheme of dimension~$1$ of the formal completion~$\widehat X_{x_j}$ of~$X$ at~$x_j$. Assume that the family of formal subschemes $(\widehat V_1,\dots,\widehat V_m)$ admits a uniformization of order at most~$\rho\geqslant 0$ at some finite or Archimedean place~$p_0$ of~$\bQ$.
Let $r$ be the dimension of the Zariski closure of 
~$\widehat V=\bigcup_{j=1}^m \widehat V_j$ in~$X$.

Then, 
\begin{itemize}
\item either $r>1$ and \[\sum_{j=1}^m\frac{1}{\alpha_jd_j}\leqslant \frac{r}{r-1}\rho,\]
\item or $r=1$, that is to say the $\widehat V_j$'s are all algebraic. 
\end{itemize}
\end{theo}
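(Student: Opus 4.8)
The plan is to run the slopes method of Bost in the form it was set up in the earlier sections. Fix a number field $K$ containing all the residue fields $K_j$; over $K$ each $\widehat V_j$ splits into $d_j$ conjugate formal curves $\widehat V_{j,\sigma}$, and by Lemma~\ref{comparaison hauteur morphisme evaluation en les conjugues} the heights of the evaluation morphisms of conjugate curves agree, so the arithmetic data of $\widehat V_j$ is really carried by its $d_j$ conjugates with equal weight. The engine is a filtration of $E_D=\Gamma(\mathscr X,\mathscr L^{\otimes D})$ by order of vanishing along the family $(\widehat V_{j,\sigma})$, but --- and this is the novelty stressed in the introduction --- the prescribed order of vanishing grows along the $j$-th curve at a speed proportional to $1/(\alpha_j d_j)$ rather than uniformly. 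Concretely, at step $k$ of the filtration one requires vanishing to order roughly $k/(\alpha_j d_j)$ along each conjugate of $\widehat V_j$; the successive quotients are controlled by the evaluation morphisms $\varphi^{\bullet}_{D,\widehat V_{j,\sigma}}$.

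\medskip

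\textbf{The slopes inequality.} The first main step is the \emph{slopes inequality} itself. Because the Zariski closure $Y$ of $\widehat V$ has dimension $r$, for $D$ large the restriction map $E_D\to \bigoplus_{j,\sigma}\Gamma((V_{j,\sigma})_{N_j},\mathscr L^{\otimes D})$ --- where $N_j$ is the order of vanishing imposed along $\widehat V_{j,\sigma}$ at the top of the filtration --- is injective once the target has dimension at least $\dim E_D$. Counting dimensions: $\dim E_D \sim c\,D^{\dim X}$ with $c=\deg_L X$, while the rank of the $k$-th graded piece attached to $\widehat V_{j,\sigma}$ is the rank of $\Sym^k(\Omega^1_{\widehat V_{j,\sigma}})\otimes L^D_{P}$, which is $1$ (the curves are $1$-dimensional). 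Summing the ranks over the filtration steps and over all $j,\sigma$, the total is about $\sum_j d_j \cdot (\text{top order along }\widehat V_j)$; to make this at least $\dim E_D$ one needs the top orders to be of size $D^{\dim X}/d_j$ up to the speed factors, i.e. of size proportional to $\alpha_j d_j$ times a common parameter $H$ with $\sum_j d_j\cdot \alpha_j d_j\cdot(\text{normalisation}) \gtrsim D^{\dim X}$. Applying the slopes inequality of Bost to this injection, $\widehat{\deg}\,\overline{E_D}$ is bounded by the sum over all steps of (ranks) $\times$ (heights of the evaluation morphisms) plus a slope term; the arithmetic Hilbert--Samuel theorem gives $\widehat{\deg}\,\overline{E_D}\gtrsim \widehat{\mathrm{vol}}(\overline{\mathscr L})\,D^{\dim X +1}/(\dim X+1)$ as a lower bound, and the maximal-slope term $\widehat\mu_{\max}(\overline{E_D})$ is $O(D\log D)$. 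This yields a lower bound of the form
\[
c\,D^{\dim X+1} \;\lesssim\; \sum_{j=1}^m\sum_{\sigma}\sum_{k} h_{J}\!\left(\varphi^{k}_{D,\widehat V_{j,\sigma}}\right) \;+\; O\!\left(D^{\dim X}\log D\right),
\]
the precise shape being Proposition~\ref{maj hauteur} of the paper.

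\medskip

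\textbf{Upper bounds for the heights.} The second step is to bound the right-hand side from above using the two hypotheses, exactly as announced: Lemma~\ref{alpha arithmetique degre} turns $\alpha_j$-arithmeticity of $\widehat V_j$ into a bound $\sum_{v\nmid p_0} h_v(\varphi^k_{D,\widehat V_{j,\sigma}}) \le \overline\alpha_j k\log k + C(k+D)$ for every $\overline\alpha_j>\alpha_j$, while Lemma~\ref{bonne maj morphisme d'evaluation}, via the Schwarz lemma at the place $p_0$ (maximum principle on $M\setminus T$ after deleting small disks around the $\tau\in T$, using the order-$\rho$ uniformization), bounds the remaining local contribution at $p_0$ by $-\kappa\,k^{2}/\rho + C(k+D)$ up to the relevant normalisations, where the negative sign comes from the fact that sections vanishing to high order along the curve must be small. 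Plugging these into the slopes inequality, summing the geometric-type series $\sum_k k\log k \sim \frac12 k_{\max}^2\log k_{\max}$ over the filtration, and recording that for the $j$-th curve $k$ runs up to a value proportional to $\alpha_j d_j H$ while the total count of steps forces $\sum_j (\text{that range}) \cdot d_j \gtrsim D^{\dim X}$, produces an inequality between $\rho$, the $\alpha_j$, the $d_j$, $c=\deg_L X$ and the degree $c_Y=\deg_L Y$ of the Zariski closure.

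\medskip

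\textbf{Conclusion and the dichotomy.} When $r=\dim Y >1$ one may replace $X$ by $Y$ from the start (the curves live in $Y$) so that $\dim X = r$, and the degree bookkeeping gives exactly the constant $r/(r-1)$ in front of $\rho$: the ``$-1$'' arises because the volume estimate scales as $D^{r+1}$ while the dominant term on the right scales as $D^{r+1}$ as well but with a combinatorial factor $\frac{r-1}{r}$ coming from summing $k$ up to $\Theta(D)$ against $k\log k$-type contributions distributed over $r$-dimensional space of sections --- this is the Bertrand-type sharpening of the classical exponent. Letting each $\overline\alpha_j\downarrow\alpha_j$ yields $\sum_j \frac{1}{\alpha_j d_j}\le \frac{r}{r-1}\rho$. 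If instead this inequality fails, the contradiction forces $r=1$, i.e. $\dim Y = 1 = \dim \widehat V$, which is precisely the statement that each $\widehat V_j$ is algebraic. The main obstacle, as always in the slopes method, is the careful matching of the two estimates at the correct order of magnitude in $D$ (and simultaneously in the filtration height $H$), in particular getting the sharp constant $r/(r-1)$ out of the combinatorics of the multi-speed filtration rather than the crude $1$ that a uniform-speed filtration would give; the Schwarz lemma of Lemma~\ref{bonne maj morphisme d'evaluation} in the $p$-adic case (deleting Berkovich-type disks around the $\tau$) is the other delicate point, but it is supplied by the earlier sections.
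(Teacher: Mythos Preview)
Your overall architecture is right and matches the paper: slopes method, a multi-speed filtration with speeds eventually chosen proportional to $1/(\alpha_j d_j)$, the $\alpha$-arithmetic bound away from $p_0$, and a Schwarz lemma at $p_0$. But two concrete points are off, and the second one is exactly where the constant $r/(r-1)$ lives.

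\medskip

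\textbf{The Schwarz estimate.} Lemma~\ref{bonne maj morphisme d'evaluation} does not give $-\kappa k^2/\rho$. It gives, for any $\lambda$ with $\lambda\rho<1$,
\[
h_\sigma(\varphi^{k,l}_D)\leqslant C(k+D)-\lambda\,k\log\tfrac{k}{D},
\]
a $k\log k$ term, not $k^2$. Moreover only $[K:\bQ]/d_{a_k}$ of the embeddings $K\hookrightarrow\bC_{p_0}$ see this good bound (the uniformization picks out \emph{one} geometric point over each $x_j$); at the remaining ones you only have the $\alpha$-arithmetic bound. Summing over all places (Proposition~\ref{maj hauteur}) the total height is
\[
h(\varphi^{k,l}_D)\leqslant C(k+D)-[K:\bQ]\Big(\tfrac{\lambda}{d_{a_k}}-\ol\alpha_{a_k}\beta_{a_k}\Big)k\log k+[K:\bQ]\tfrac{\lambda}{d_{a_k}}k\log D.
\]
Here $\beta_j$ is a \emph{free} rational parameter (the derivation speed along $\widehat V_j$), with $\sum\beta_j=1$; it is only at the very end that one optimizes to $\beta_j\propto 1/(\ol\alpha_j d_j)$.

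\medskip

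\textbf{How $r/(r-1)$ actually appears.} Your ``degree bookkeeping'' paragraph is not a proof, and the paper's mechanism is different from what you describe. After replacing $X$ by the Zariski closure (so $n=r$), set $A=\lambda$ and $B=\lambda-\max_j\ol\alpha_j\beta_j d_j$. The slopes inequality, together with the \emph{weak} arithmetic Hilbert--Samuel bound $\widehat\deg(\overline{\mathscr E_D})\geqslant -CD^{n+1}$ (not a positive lower bound), reads
\[
\sum_k \rk(E^k_D/E^{k+1}_D)\Big[-C_6(k+D)+B_k\,k\log k-A_k\,k\log D\Big]\leqslant CD^{n+1}.
\]
Now suppose $A/B<n$ and pick $\beta\in(A/B,n)$. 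Split the sum at $k=D^\beta$. The part $k\le D^\beta$ is $O(D^{2\beta}\log D)$ trivially. For $k>D^\beta$ each bracket is at least $c\,k\log D$ with $c>0$, and $\sum_{k>D^\beta}\rk(E^k_D/E^{k+1}_D)=\rk E_D^{\lceil D^\beta\rceil}\geqslant \rk E_D-O(D^\beta)\gtrsim D^n$ by geometric Hilbert--Samuel since $\beta<n$. Hence the left side is $\gtrsim D^{n+\beta}\log D$, contradicting the right side $CD^{n+1}$ because $\beta>A/B\geqslant 1$. Therefore $A/B\geqslant n$, i.e.\ $(n-1)\lambda\leqslant n\max_j\ol\alpha_j\beta_j d_j$. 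Letting $\lambda\uparrow 1/\rho$ and then minimizing the right side over admissible $(\beta_j)$ (the minimum is $\big(\sum_j 1/(\ol\alpha_j d_j)\big)^{-1}$, attained at $\beta_j\propto 1/(\ol\alpha_j d_j)$) gives $\sum_j 1/(\ol\alpha_j d_j)\leqslant \tfrac{n}{n-1}\rho$; finally let $\ol\alpha_j\downarrow\alpha_j$.

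This cutting-at-$D^\beta$ argument is the heart of the proof and is missing from your sketch; the ``$\sum k\log k\sim \tfrac12 k_{\max}^2\log k_{\max}$'' heuristic you propose does not produce the inequality $A/B\geqslant n$ and would not yield the sharp constant.
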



%

\begin{remarque} Under the hypothesis of uniformization of the theorem, if one of the formal subschemes~$\widehat V_j$, $j\in\{1,\dots,m\}$, is algebraic, then they all are. Indeed, if there exists some~$j$ such that $\widehat V_j$ is not Zariski-dense in~$X$, there exists a non-zero rational function~$P$ on~$X$, identically equal to zero on~$\widehat V_j$. The holomorphic function $\Theta^*P$ on $M\setminus T$ vanishes with an infinite order of vanishing at any point~$w_j$ such that $\Theta(w_j)=x_j$. As $M\setminus T$ is connected, $\Theta^*P$ is identically~0. Thus the restriction of~$P$ to the $\widehat V_i$, $i\in\{1,\dots,m\}$ is zero, and none of the formal subschemes~$\widehat V_i$ is Zariski-dense in~$X$.
\end{remarque}

Replacing~$X$ by the Zariski closure of~$\widehat V=\cup_{i=1}^m\widehat V_i$, one can assume that the formal subschemes~$\widehat V_i$ are all dense in~$X$ and that $r$ which is the dimension of the Zariski closure of~$\widehat V$ in~$X$ is equal to~$n$, the dimension of $X$. That is  what we do from now on.


\subsection{Choice of the filtration and evaluation morphisms}\label{morphisme evaluation en plusieurs points}

Let~$L$ be an ample line bundle on~$X$.  We construct a filtration of the space~$E_D$ of the sections of~$L^D$ by the order of vanishing along the formal subschemes~$\widehat V_i$. At a step of the filtration, we do not impose the same oder of vanishing along the~$m$ formal subschemes.

Let $K$ be a finite Galois extension of~$\bQ$, included in~$\bC_{p_0}$ and containing the fields  $\sigma_1(K_1),\dots,\sigma_m(K_m)$, where for every $j\in\{1,\dots,m\}$, $\sigma_j$ is the embedding of~$K_j$ in~$\bC_{p_0}$ given by the uniformization. We define the following $\bQ$-vector spaces and $K$-vector spaces:
	\[E_{\bQ,D}=\Gamma(X,L^D),\]
and
	\[E_{K,D}=E_{\bQ,D}\otimes_{\bQ}K.\]

Let $\eta_{\bQ,D}$ be the restriction morphism from $E_{\bQ,D}=\Gamma(X,L^D)$ to $\bigoplus_{j=1}^m\Gamma(\widehat V_j,L^D)$. We will look at the extension of scalars from $\bQ$ to~$K$ of this $\bQ$-linear map. In order to describe it, we use the following lemma which describes the scalars extension from~$\bQ$ to~$K$ of the $K_j$-vector space $\Gamma(\widehat V_j,L^D)$, for every $j\in\{1,\dots,m\}$.

\begin{lemme}
For every $j\in\{1,\dots,m\}$, let $I_j$ be the definition ideal in $\Osheaf_{\widehat X_{x_j}}\simeq K_j[[U]]$ of~$\widehat V_j$. Then
\[\Gamma(\widehat V_j,L^D)\otimes_{\bQ}K=\bigoplus_{\sigma:K_j\hookrightarrow K}\Gamma(\widehat V_{\sigma(\xi_j)},L^D),\]
where $\widehat V_{\sigma(\xi_j)}$ denote the $K$-formal subsecheme of $\Specf K[[U]]$ defined by the ideal $K\sigma(I_j)$.
\end{lemme}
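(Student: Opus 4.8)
The plan is to treat the identity as a flat base‑change computation that reduces entirely to the decomposition of the étale $\bQ$‑algebra $K_j\otimes_\bQ K$. First I would record the standard fact that, since $K_j/\bQ$ is finite separable and $K$ is a Galois — hence normal — extension of $\bQ$ containing the copy $\sigma_j(K_j)$ of $K_j$, and therefore, by normality, all $\bQ$‑conjugates of that copy, the algebra $K_j\otimes_\bQ K$ splits completely:
\[K_j\otimes_\bQ K\;\simeq\;\prod_{\sigma\colon K_j\hookrightarrow K}K,\qquad a\otimes x\longmapsto\bigl(\sigma(a)\,x\bigr)_\sigma.\]
The number of factors is exactly $d_j=[K_j:\bQ]$, which matches the $K$‑dimension of the left‑hand side.

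Next I would transfer this to the formal completion. Identifying $\Osheaf_{\widehat X_{x_j}}$ with $K_j[[U]]$ in $n$ variables, I would use that for any finite field extension $K'/K_j$ one has $K_j[[U]]\otimes_{K_j}K'\simeq K'[[U]]$: choosing a finite $K_j$‑basis of $K'$, every power series with coefficients in $K'$ is in a unique way a $K'$‑combination of power series with coefficients in $K_j$. Applying this with $K'=K$, regarded as a $K_j$‑algebra through each embedding $\sigma$, and combining with the previous display gives
\[\Osheaf_{\widehat X_{x_j}}\otimes_\bQ K\;=\;K_j[[U]]\otimes_\bQ K\;\simeq\;\prod_{\sigma\colon K_j\hookrightarrow K}K[[U]],\]
the component of $f=\sum_I a_IU^I$ in the $\sigma$‑th factor being $\sum_I\sigma(a_I)U^I$; because $K$ is finite‑dimensional over $\bQ$ this ordinary tensor product is already complete, so no completed tensor product is needed. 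Since extension of scalars along $\bQ\hookrightarrow K$ is exact, the ideal $I_j$ is carried isomorphically onto the ideal it generates, which under the above identification is $\prod_\sigma K\sigma(I_j)$; passing to quotients yields
\[\Osheaf_{\widehat V_j}\otimes_\bQ K\;\simeq\;\prod_{\sigma\colon K_j\hookrightarrow K}\Osheaf_{\widehat V_{\sigma(\xi_j)}},\]
that is, $\widehat V_j\otimes_\bQ K=\bigsqcup_{\sigma}\widehat V_{\sigma(\xi_j)}$, in agreement with the decomposition of scalar extensions of formal subschemes recalled in Section~\ref{section points fermes}.

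Finally I would bring in the line bundle. As $\Osheaf_{\widehat X_{x_j}}$ is a complete local ring, the restriction of $L^{\otimes D}$ to $\widehat X_{x_j}$, hence also to $\widehat V_j$, is free of rank one; a trivializing section $s_0$ (a generator of $L^D$ near $x_j$) gives an $\Osheaf_{\widehat V_j}$‑linear isomorphism $\Gamma(\widehat V_j,L^D)\simeq\Osheaf_{\widehat V_j}$ compatible with the base change above, its pullbacks trivializing $L^D$ on each $\widehat V_{\sigma(\xi_j)}$. (Equivalently: global sections of a quasi‑coherent sheaf on an affine formal scheme commute with the flat base change $\bQ\to K$ and are additive over a disjoint union of formal schemes.) Tensoring with $K$ and inserting the previous step gives
\[\Gamma(\widehat V_j,L^D)\otimes_\bQ K\;\simeq\;\bigoplus_{\sigma\colon K_j\hookrightarrow K}\Gamma\bigl(\widehat V_{\sigma(\xi_j)},L^D\bigr),\]
which is the stated formula, the direct sum and the product coinciding since the index set is finite.

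\textbf{Main obstacle.} There is no deep difficulty here; the one step that deserves genuine care is the commutation of $-\otimes_\bQ K$ with passage to formal power series, namely $K_j[[U]]\otimes_\bQ K\simeq\prod_\sigma K[[U]]$. This uses both the finiteness of $[K:\bQ]$ (so that the tensor product is automatically complete) and the complete splitting of $K_j\otimes_\bQ K$, and one must keep track of how each embedding $\sigma$ acts on the coefficients in order to match each factor with the correct formal subscheme $\widehat V_{\sigma(\xi_j)}$ and its defining ideal $K\sigma(I_j)$. Everything else is routine.
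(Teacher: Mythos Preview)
Your argument is correct and is exactly the standard one: split $K_j\otimes_\bQ K$ using that $K/\bQ$ is Galois and contains a copy of $K_j$, lift this to formal power series via finiteness of $[K:K_j]$, pass to the quotient by $I_j$, and trivialize $L^{\otimes D}$ over the complete local ring. The paper in fact states this lemma without proof, so there is nothing to compare against; your write-up supplies precisely the routine verification the author left implicit, and you were right to flag the commutation $K_j[[U]]\otimes_\bQ K\simeq\prod_\sigma K[[U]]$ as the only point requiring a moment's care.
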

Hence the scalars extension of~$\eta_{\bQ,D}$ to~$K$ is the $K$-linear map:
	\begin{equation}\label{eta_K}\eta_{K,D}:E_{K,D}\to\bigoplus_{j=1}^m\bigoplus_{\sigma:K_j\hookrightarrow K}\Gamma(\widehat V_{\sigma(\xi_j)},L^D).\end{equation}

\begin{lemme} The following propositions are equivalent:
\begin{enumerate}
\item For all $D$, the map $\eta_{K,D}$ is injective.
\item For all sufficiently big $D$, the map $\eta_{K,D}$ is injective.
\item The formal subschemes~$\widehat V$ are dense in~$X$.
\end{enumerate}
\end{lemme}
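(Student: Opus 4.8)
The plan is to prove the cycle $(1)\Rightarrow(2)\Rightarrow(3)\Rightarrow(1)$, first reducing everything to a statement about the $\bQ$-linear map $\eta_{\bQ,D}\colon\Gamma(X,L^D)\to\bigoplus_{j=1}^m\Gamma(\widehat V_j,L^D)$. Indeed, by the decomposition lemma just proved and by \eqref{eta_K}, the map $\eta_{K,D}$ is nothing but $\eta_{\bQ,D}\otimes_{\bQ}\id_K$; since $\bQ\hookrightarrow K$ is faithfully flat, $\ker\eta_{K,D}=(\ker\eta_{\bQ,D})\otimes_\bQ K$, so $\eta_{K,D}$ is injective if and only if $\eta_{\bQ,D}$ is. The implication $(1)\Rightarrow(2)$ is then immediate.

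For $(3)\Rightarrow(1)$, fix $D$ and take $s\in\Gamma(X,L^D)$ in $\ker\eta_{\bQ,D}$, and suppose $s\neq 0$. Since $X$ is integral and $L$ a line bundle, the zero locus $Z(s)$ is a \emph{proper} Zariski-closed subset of $X$. Trivializing $L^D$ near $x_j$ and writing $\hat s\in\Osheaf_{X,x_j}^{\wedge}$ for the resulting power series, the hypothesis $s_{|\widehat V_j}=0$ means exactly $\hat s\in I_{\widehat V_j}$; hence $(\hat s)\subseteq I_{\widehat V_j}$ and $\widehat V_j$ is a closed formal subscheme of $\widehat{Z(s)}_{x_j}$. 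Thus $Z(s)$ is a proper closed subset with $\widehat V_j\subseteq\widehat{Z(s)}_{x_j}$ for all $j$, so it contains the Zariski closure of $\widehat V$; but under $(3)$ that closure equals $X$, contradicting $Z(s)\subsetneq X$. Therefore $s=0$, and $\eta_{\bQ,D}$ is injective for every $D$.

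For $(2)\Rightarrow(3)$ I argue by contraposition. Suppose $\widehat V$ is not Zariski dense and let $Y\subsetneq X$ be its Zariski closure with its reduced structure, so that its ideal sheaf $\sI_Y$ is a nonzero coherent sheaf. Since $L$ is ample, Serre's theorem gives $D_0$ such that $\sI_Y\otimes L^D$ is globally generated for all $D\geqslant D_0$; in particular it admits a nonzero section $s\in H^0(X,\sI_Y\otimes L^D)\subseteq H^0(X,L^D)$. Such an $s$ vanishes scheme-theoretically along $Y$, hence along $\widehat Y_{x_j}\supseteq\widehat V_j$ for each $j$, so $s$ is a nonzero element of $\ker\eta_{\bQ,D}$. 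Thus $\eta_{\bQ,D}$, and with it $\eta_{K,D}$, fails to be injective for every $D\geqslant D_0$, contradicting $(2)$.

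The only point requiring care is the formal/algebraic bookkeeping in $(3)\Rightarrow(1)$: one must check that ``$s$ vanishes along the formal subscheme $\widehat V_j$'' translates precisely into the inclusion $\widehat V_j\subseteq\widehat{Z(s)}_{x_j}$ of completions at $x_j$, which is what allows one to invoke the minimality property defining the Zariski closure of $\widehat V$. The remaining ingredients are standard: faithful flatness of $\bQ\to K$, ampleness of $L$ for Serre vanishing and global generation, and the fact that a nonzero section of a line bundle on an integral scheme has a proper zero locus.
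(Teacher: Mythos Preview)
Your proof is correct and follows essentially the same route as the paper's: both argue the cycle $(1)\Rightarrow(2)\Rightarrow(3)\Rightarrow(1)$, with $(2)\Rightarrow(3)$ by contraposition via a nonzero section of $L^D$ vanishing on the (proper) Zariski closure for large $D$, and $(3)\Rightarrow(1)$ by observing that the zero locus of a nonzero section would be a proper closed set containing $\widehat V$. Your version is a bit more explicit (the faithful-flatness reduction to $\eta_{\bQ,D}$, the invocation of Serre's theorem, and the formal bookkeeping $\widehat V_j\subseteq\widehat{Z(s)}_{x_j}$), but the ideas are identical.
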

\begin{proof}
If $\widehat V$ is not dense, then~$\widehat V$ is included in a hypersurface~$H$ of~$X$. For~$D$ big enough, the ample line bundle~$L^D$ has a non-trivial global section which identically vanishes on~$H$ and therefore $\eta_{K,D}$ is not injective, and Condition~2 implies Condition~3. We now show that 3. implies~1. Assume $\widehat V$ is dense in~$X$ and let~$s$ be a section of~$L^D$ on~$X$ which identically vanishes on~$\widehat V$. Then $\widehat V$ is included in the divisor of~$s$, which is equal to the whole variety~$X$ since the formal subscheme~$\widehat V$ is dense in~$X$. Therefore, $s$ is zero on~$X$ and $\eta_{K,D}$ is injective.
\end{proof}	



Let $(a_k)_{k\in\bZ_{>0}}$ be a sequence of integers between~1 et~$m$ et set~$a_0=0$. For every~$k$, the number~$a_k$ will indicate that we ask the sections in the $(k+1)$-step $E^k_D$ of the filtration to vanish with a bigger order along~$\widehat V_{a_k}$ than at the previous step~$E^{k-1}_D$, and ask no extra vanishing condition along the others formal subschemes.

For every $i\in\{1,\dots,m\}$ and~$k\in\bZ_{\geqslant 0}$, let
	\begin{equation*}\omega_i(k) =\Card\{0\leqslant j<k\ |\ a_j=i\}
	=\sum_{0\leqslant j<k} \delta_{a_j,i},\end{equation*}
where $\delta_{u,v}$ denotes the Kronecker symbol.

We set \begin{equation}\label{def n_k}n_k=\omega_{a_k}(k).\end{equation}

Recall that, for every $j\in\{1,\dots,m\}$, $d_j=[K_j:\bQ]$ and denote by $\sigma_j^1,\dots,\sigma_j^{d_j}$ the~$d_j$ embeddings of~$K_j$ in~$K$.

We define a descending filtration on~$E_{\bQ,D}$ and then on~$E_{K,D}$. First, for all positive integer~$D$ we define the following $\bQ$-vector spaces:
	\[E^0_{\bQ,D}=E_{\bQ,D},\]
and for all positive integer $k$, 
	\[E^k_{\bQ,D}=\{s\in E_{\bQ,D}| \ s_{|(V_{j})_{\omega_j(k)-1}}=0 \mbox{ pour tout }j\in\{1,\dots,m\}\}.\]
And then we define the $K$-vector spaces
	\begin{align}
	E^k_{K,D}&=E^k_{\bQ,D}\otimes_{\bQ}K \nonumber \\
	&=\bigcap_{j=1}^m\bigcap_{\sigma:K_j\hookrightarrow K}\{s\in E_D| \ s_{|(V_{\sigma(\xi_j)})_{\omega_j(k)-1}}=0\} \label{filtration 1 E^k_D sur le grand corps K}\\
	&=\bigcap_{j=1}^m\bigcap_{l=1}^{d_j}\ker\left(\eta^{\omega_j(k)}_{D,\widehat V_{\sigma^l_{a_j}(\xi_j)}}\right),\nonumber
	\end{align}
where $\eta^{\omega_j(k)}_{D,\widehat V_{\sigma^l_{a_j}(\xi_j)}}$ is the map defined by~\eqref{eta conjugue}.

To simplify the notation, we will not write the subscript $K$ anymore  for these \mbox{$K$-vector} spaces. Hence, we set $E_D=E_{K,D}$ and, for every non-negative integer~$k$, $E^k_D=E^k_{K,D}$. We defined a descending filtration of $E_D$, which is separated if $\eta_D$ is injective.
	
The kernel of the restriction map
	\[\bigoplus_{j=1}^m\Gamma\Big({(V_{\sigma(\xi_j)}))}_k,L^D\Big) \to \bigoplus_{j=1}^m\Gamma\Big((V_{\sigma(\xi_j)})\Big)_{k-1},L^D\Big)\]
is isomorphic to $\bigoplus_{j=1}^{m}\Sym^k\left(\Omega^1_{\widehat V_{\sigma(\xi_j)})}\right)\otimes L^D_{\sigma(\xi_j)}$. The map $\eta^k_D$ restricted to~$E^k_D$ induces therefore a linear map
	\begin{equation}\label{morphisme d'evaluation} \varphi^k_D:E^k_D\longrightarrow\bigoplus_{j=1}^{m}\bigoplus_{\sigma:K_j\hookrightarrow K}\Sym^k\left(\Omega^1_{\widehat V_{\sigma(\xi_j)})}\right)\otimes L^D_{|\sigma(\xi_j)},
	\end{equation}
which maps a section of~$L^D$ vanishing at order~$\omega_j(k)$ along the formal subscheme~$\widehat V_j$ for all $j\in\{1,\dots,m\}$ on the $(k+1)$-th ``Taylor coefficients'' of its restrictions to $\widehat V_{\sigma(\xi_1)},\dots,\widehat V_{\sigma(\xi_m)}$. By definition, the kernel of $\varphi_D^k$ is equal to~$E_D^{k+1}.$


We also define a refinement of the previous filtration on $E_D$ obtained by taking the tensor product of the filtration of $E_{\bQ,D}$. On every $E^k_{D}$, we define a new descending filtration:
	\[E^k_D=E^{k,0}_D\supseteq\dots\supseteq E^{k,d_{a_k}-1}_D\supseteq E^{k,d_{a_k}}_D=E^{k+1}_D,\]
where for all $l\in\{1,\dots,d_{a_k}-1\}$, $E^{k,l}_D$ is defined by
\[E^{k,l}_D=\left\{s\in E^{k,l-1}_D| \ s_{\left|\left(V_{\sigma^l_{a_k}(\xi_{a_k})} \right)_{n_k} \right.}=0\right\}.\]
For every non-negative integer~$k$ and every $l\in\{1,\dots,d_{a_k}\}$, let
	\begin{equation}\label{ sigma(xi)}
	\xi_{a_k}^l=\sigma_{a_k}^{l}(\xi_{a_k}).
	\end{equation}
We define the $K$-linear evaluation morphism
	\[\varphi^{k,l}_D:E^{k,l-1}_D\to\Sym^{n_k}\Omega^1_{\widehat V_{\xi_{a_k}^l}}\otimes L^D_{|\xi_{a_k}^l}.\]
The range $\Sym^{n_k}\Omega^1_{\widehat V_{\xi_{a_k}^l}}\otimes L^D_{|\xi_{a_k}^l}$ is a $K$-vector space of dimension~1. The kernel of $\varphi^{k,l}_D$ is $E^{k,l}_D$; let again $\varphi^{k,l}_D$ denote the injective $K$-linear map obtained by taking the quotient:
	\begin{equation}\label{morphisme evaluation double filtration}
	\varphi^{k,l}_D:E^{k,l-1}_D/E^{k,l}_D\hookrightarrow\Sym^{n_k}\Omega^1_{\widehat V_{\xi_{a_k}^l}}\otimes L^D_{|\xi_{a_k}^l}.
	\end{equation}
As its range is of dimension~1, the map~$\varphi^{k,l}_D$ is an isomorphism as soon as $E^{k,l}_D$ is strictly included in~$E^{k,l-1}_D$.
The image $\varphi^{k,l}_D(s)$ of a section $s$ in $E^k_D$ by this evaluation morphism equals to $\varphi^{n_k}_{D,\widehat V_{\sigma^l_{a_k}(\xi_{a_k})}}(s)$ with the notation~\eqref{morphisme evaluation 1 des conjugues} we used for the evaluation morphisms along one of the conjugates of a formal subscheme at a closed point.
\subsection{Slopes inequality}\label{section IP}

For $k\in\bZ_{\geqslant 0}$ and $l\in\{1,\dots,d_{a_k}\}$, the $K$-vector spaces $E_{D}^{k,l}$ et $\Sym^k\Omega^1_{\widehat V_{\sigma_{i}^{l}(\xi_{i})}}$ can be equipped with integral structures thanks to the choice of a projective model of~$X$ over~$\Spec\fko_K$. As explained in Paragraph~\ref{fibres vectoriels hermitiens}, they are equipped with Hermitian structures ( $\overline{\sE_D},\overline{\Omega^1_{\widehat V_{i}}} $), the norm on ~$E^{k,l}_D\otimes\bC$ being the John norm associated with the infinity norm (see page~\pageref{comparaison norme John}). We denote by~$h_J(\varphi^k_D)$ the height of the evaluation morphisms relative to these Hermitian norms.

Then we have the following \emph{slopes inequality} due to J.-B.~Bost (see for instance \citep{bost_bourbaki96, chambert-loir_algebricite, bost_algebraic-leaves, chen_these, bost_slopes}), which reflects the fact that the map 
\[\eta_{K,D}:E_{K,D}\to\bigoplus_{j=1}^m\bigoplus_{\sigma:K_j\hookrightarrow K}\Gamma(\widehat V_{\sigma(\xi_j)},L^D)\]
defined by \eqref{eta_K} is injective:

	\begin{align}\label{IPfiltrationdegre}
	\dega(\overline{\mathscr E_{D}})\leqslant\sum_{k=0}^{\infty}\sum_{l=1}^{d_{a_k}}\rk(E_{D}^{k,l-1}/E_{D}^{k,l}) & \Bigg[\pentemax \left(\Sym^{n_k}\overline{\Omega^1_{\widehat V_{\xi_{a_k}^l}}\otimes\overline{\mathscr L}^D_{|\xi_{a_k}^l}}\right) \nonumber \\
	&+h_J(\varphi_{D}^{k,l})\Bigg],
	\end{align}
where $n_k=\omega_{a_k}(k)$, as defined by~\eqref{def n_k}.	
%
%
%
%
From Inequality~\eqref{majoration hauteur john par norme infinie}, this inequality, which involves the height $h_J(\varphi^k_{K,D})$, also holds for $h(\varphi^k_D)$ even if this height does not come from Hermitian norms at the Archimedean places,
	
	\begin{align}\label{IP plusieurs points}
\dega(\overline{\mathscr E_{D}})\leqslant\sum_{k=0}^{\infty}\sum_{l=1}^{d_{a_k}}\rk(E_{D}^{k,l-1}/E_{D}^{k,l}) & \Bigg[\pentemax \left(\Sym^{n_k}\overline{\Omega^1_{\widehat V_{\xi_{a_k}^l}}\otimes\overline{\mathscr L}^D_{|\xi_{a_k}^l}}\right) \nonumber \\
	&+h(\varphi_{D}^{k,l})\Bigg].
	\end{align}

We give an upper bound for the maximal slope arising in this inequality, so as a lower bound for the arithmetic degree.

\begin{lemme}\label{lemme pente max} With the previous notation, there exists a real number~$C_1>0$ such that
\[\pentemax\left(\Sym^{n_k}\overline{\Omega^1_{\widehat V_{\xi_{a_k}^l}}}\otimes\overline{\mathscr L}^D_{|\xi_{a_k}^l}\right)\leqslant C_1(n_k+D)\leqslant C_1(k+D).\]
\end{lemme}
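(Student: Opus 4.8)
The statement to prove is Lemma~\ref{lemme pente max}: an upper bound

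\[\pentemax\left(\Sym^{n_k}\overline{\Omega^1_{\widehat V_{\xi_{a_k}^l}}}\otimes\overline{\mathscr L}^D_{|\xi_{a_k}^l}\right)\leqslant C_1(n_k+D)\leqslant C_1(k+D)\]

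for a constant $C_1$ independent of $k$, $l$, $D$. The natural approach is to estimate the maximal slope of the tensor product by splitting it into the two factors and controlling each one. Since $\overline{\mathscr L}^D_{|\xi_{a_k}^l}$ is a Hermitian line bundle over $\Spec\fko_K$, its arithmetic degree — which is also its only slope — is $D$ times the arithmetic degree of $\overline{\mathscr L}_{|\xi_{a_k}^l}$, and these are bounded uniformly in $l$ because there are only finitely many conjugates $\xi_{a_k}^l$ (at most $\sum_j d_j$ of them). So the $\overline{\mathscr L}^D$-part contributes a term of the form $c_2 D$. The remaining work is to bound $\pentemax\left(\Sym^{n_k}\overline{\Omega^1_{\widehat V_{\xi_{a_k}^l}}}\right)$ linearly in $n_k$.

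First I would recall the elementary inequality for maximal slopes of tensor products: for Hermitian vector bundles $\overline E,\overline F$ over $\Spec\fko_K$ one has $\pentemax(\overline E\otimes\overline F)\leqslant\pentemax(\overline E)+\pentemax(\overline F)$ (this follows from the fact that $\pente(\overline L\otimes\overline M)=\pente(\overline L)+\pente(\overline M)$ for line bundles together with the characterization of $\pentemax$ via sub-line-bundles, or from the submultiplicativity properties recorded in Bost's work on the slopes method). Applying this to $\overline E=\Sym^{n_k}\overline{\Omega^1_{\widehat V_{\xi_{a_k}^l}}}$ and the line bundle $\overline F=\overline{\mathscr L}^D_{|\xi_{a_k}^l}$ reduces the problem to the Hermitian vector bundle $\overline{\Omega^1_{\widehat V_{\xi_{a_k}^l}}}$ on $\Spec\fko_K$, which is a Hermitian \emph{line} bundle since $\widehat V$ has dimension~$1$ (this is exactly $\overline{\check\t_P\widehat V}$ in the notation of Paragraph~\ref{fibres vectoriels hermitiens}). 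For a Hermitian line bundle $\overline M$, $\Sym^{n_k}\overline M=\overline M^{\otimes n_k}$ and $\pentemax(\overline M^{\otimes n_k})=n_k\,\pente(\overline M)=n_k\,\dega(\overline M)$. Again there are only finitely many cotangent lines $\Omega^1_{\widehat V_{\xi_{a_k}^l}}$ involved (one per conjugate $\xi_{a_k}^l$, hence at most $\sum_{j=1}^m d_j$ of them, all fixed once $X$, the points $x_j$ and the $\widehat V_j$ are fixed), so their arithmetic degrees are bounded by a single constant $c_1$.

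Combining, $\pentemax\left(\Sym^{n_k}\overline{\Omega^1_{\widehat V_{\xi_{a_k}^l}}}\otimes\overline{\mathscr L}^D_{|\xi_{a_k}^l}\right)\leqslant n_k c_1+D c_2\leqslant C_1(n_k+D)$ with $C_1=\max(c_1,c_2)$, and the last inequality $n_k+D\leqslant k+D$ is immediate from $n_k=\omega_{a_k}(k)=\Card\{0\leqslant j<k:a_j=a_k\}\leqslant k$. I do not expect any serious obstacle here: the only mild subtlety is to be careful that the implicit Hermitian metrics on the $\sigma$-conjugate cotangent lines and on the fibers $\overline{\mathscr L}_{|\xi_{a_k}^l}$ are the ones induced by a single fixed choice of model and metrics on $X$, so that the finitely many arithmetic degrees entering the bound genuinely depend only on the fixed data $(X,\overline{\mathscr L},(x_j),(\widehat V_j))$ and not on $k,l,D$; once this bookkeeping is in place the estimate is a one-line consequence of the additivity of slopes under tensor product.
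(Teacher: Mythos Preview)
Your argument is correct and is precisely the content of the Lemmas~4.2 and~4.3 of~\citep{bost_algebraic-leaves} that the paper invokes as its proof: tensoring by the Hermitian line bundle $\overline{\mathscr L}^D_{|\xi_{a_k}^l}$ shifts the maximal slope by $D\,\dega(\overline{\mathscr L}_{|\xi_{a_k}^l})$, and since $\dim\widehat V_j=1$ the cotangent factor is a line bundle whose $n_k$-th symmetric power contributes $n_k$ times a fixed degree, with only finitely many points $\xi_{a_k}^l$ involved. The only cosmetic point is that to ensure $C_1>0$ as stated you should take $C_1=\max(c_1,c_2,1)$ rather than $\max(c_1,c_2)$.
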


\begin{proof} 
See~\citep{bost_algebraic-leaves}, Lemmas 4.2 and 4.3.
\end{proof}

The lower bound for the arithmetic degree of~$\overline{\mathscr E}_D$ we will use is a weak form of the \emph{arithmetic Hilbert-Samuel theorem} (see Proposition~4.4 and Lemma~4.1 of~\citep{bost_algebraic-leaves} for the proof): there exists a real number~$C>0$ such that
	\begin{equation}
	\label{Hilbert-Samuel arithmetique}\dega(\overline{\mathscr E}_D)\geqslant -CD^{n+1}.\end{equation}
Hence the slopes inequality~\eqref{IP plusieurs points} yields
\begin{equation}\label{IP avec omega1}
-CD^{n+1}\leqslant \sum_{k=0}^{\infty}\sum_{l=1}^{d_{a_k}}\rk(E_{D}^{k,l-1}/E_{D}^{k,l})(C_1(k+D)+h(\varphi_D^k)).
\end{equation}

\subsection{Choice of the derivation speeds defining the filtration}
\begin{lemme}\label{beta omega}
Let $(\beta_j)_{1\leqslant j\leqslant m}$ be a family of positive rational numbers such that $\sum_{j=1}^m\beta_j=1.$

Then there exists a map $\omega:\bZ_{\geqslant 0}\to\bZ_{\geqslant 0}^m$, $k\mapsto\omega(k)=(\omega_1(k),\dots,\omega_m(k))$, such that
	\begin{enumerate}
	\item $\omega(0)=(0,\dots,0)$;
	\item for every~$k\in\bZ_{>0}$, for every~$i\in\{1,\dots,m\}$, $\omega_{i}(k)-\omega_{i}(k-1)\in\{0,1\}$;
	\item for every~$k\in\bZ_{>0}$, there exists a unique $a_k\in\{1,\dots,m\}$ such that
		\[\omega_{a_k}(k)=\omega_{a_k}(k-1)+1;\] 
	\item for every~$i\in\{1,\dots,m\}$, $\omega_{i}(k)\underset{k\to\infty}{=}\beta_{i}k+O(1)$.
	\end{enumerate}
\end{lemme}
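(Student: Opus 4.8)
The plan is to build the sequence $(a_k)_{k\geqslant 1}$ by a greedy ``largest deficit'' rule and to define $\omega$ recursively by $\omega(0)=(0,\dots,0)$ and $\omega_{a_k}(k)=\omega_{a_k}(k-1)+1$, $\omega_i(k)=\omega_i(k-1)$ for $i\neq a_k$. Concretely, having constructed $a_1,\dots,a_{k-1}$ (hence $\omega(k-1)$), I would set $\tilde D_i=\beta_i k-\omega_i(k-1)$ for $i\in\{1,\dots,m\}$ and take $a_k$ to be the least index at which $\max_{1\leqslant i\leqslant m}\tilde D_i$ is attained. Properties (1), (2) and (3) then hold by construction: from step $k-1$ to step $k$ exactly one coordinate of $\omega$ changes, namely $\omega_{a_k}$, and it increases by exactly $1$; moreover each $\omega_i$ is nondecreasing in $k$, so $\omega(k)\in\bZ_{\geqslant 0}^m$. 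Only (4) needs an argument; I may assume $m\geqslant 2$, since for $m=1$ one has $\beta_1=1$, $a_k=1$, $\omega_1(k)=k$.

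Set $D_i(k)=\beta_i k-\omega_i(k)$. Since $\sum_{i=1}^m\omega_i(k)=k$ and $\sum_{i=1}^m\beta_i=1$, one has $\sum_{i=1}^mD_i(k)=0$ for all $k$ and $\sum_{i=1}^m\tilde D_i=k-(k-1)=1$, whence $\tilde D_{a_k}=\max_i\tilde D_i\geqslant 1/m$. First I would show by induction on $k$ that $D_i(k)>-1$ for every $i$. This is clear for $k=0$. Assuming it at $k-1$: for $i\neq a_k$, $D_i(k)=\tilde D_i=D_i(k-1)+\beta_i>-1$ as $\beta_i>0$; for $i=a_k$, $D_{a_k}(k)=\tilde D_{a_k}-1\geqslant 1/m-1>-1$. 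Once this lower bound is in hand, the upper bound follows at no extra cost from $\sum_{i}D_i(k)=0$:
\[D_i(k)=-\sum_{j\neq i}D_j(k)<m-1\qquad\text{for every }i.\]
Hence $\lvert\omega_i(k)-\beta_i k\rvert=\lvert D_i(k)\rvert<m-1$ for all $i$ and $k$, which is exactly $\omega_i(k)=\beta_i k+O(1)$, property (4).

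The one delicate point is the order in which the two bounds on the ``deficit'' $D_i(k)$ are obtained. A direct induction on a two-sided bound does not close: an idle coordinate $i\neq a_k$ drifts upward by $\beta_i$ at each step with nothing to counteract it. The lower bound $D_i(k)>-1$, by contrast, is self-propagating, precisely because the greedy rule decrements the coordinate whose $\tilde D_i$ is maximal, and that maximum is bounded below by $1/m$; the linear relation $\sum_iD_i(k)=0$ then upgrades this lower bound to a two-sided one automatically. (Rationality of the $\beta_j$ is not used anywhere in the argument.)
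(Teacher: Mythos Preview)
Your proof is correct and takes a genuinely different route from the paper's.

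The paper exploits the rationality hypothesis directly: it picks a common denominator~$\delta$ so that each $s_j=\delta\beta_j$ is a positive integer, and defines the sequence $(a_k)_{k\geqslant 1}$ to be $\delta$-periodic with first period
\[
\underbrace{1,\dots,1}_{s_1},\ \underbrace{2,\dots,2}_{s_2},\ \dots,\ \underbrace{m,\dots,m}_{s_m}.
\]
Then $\omega_i(j\delta)=js_i=\beta_i(j\delta)$ exactly, and within a period the deviation is at most $\beta_i\delta$, giving $|\omega_i(k)-\beta_i k|\leqslant\beta_i\delta$. This is an explicit periodic construction, but the $O(1)$ constant depends on the common denominator~$\delta$, which can be large.

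Your greedy ``largest deficit'' rule is more general: rationality of the $\beta_j$ is never used, and the bound $|\omega_i(k)-\beta_i k|<m-1$ depends only on~$m$. The key observation---that the lower bound $D_i(k)>-1$ propagates under the greedy rule because $\tilde D_{a_k}\geqslant 1/m$, and that the relation $\sum_i D_i(k)=0$ then converts this one-sided bound into a two-sided one---is clean and bypasses any periodicity argument. The paper's construction is shorter to write down once rationality is granted; yours gives a sharper, uniform constant and would apply verbatim with the $\beta_j$ merely positive reals summing to~$1$.
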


\begin{proof}

Let $\delta$ be a common denominator for the $\beta_j$, that is to say a positive integer such that, for every $j\in\{1,\dots,m\}$, $\delta\beta_j$ is an integer. For every $j\in\{1,\dots,m\}$, we set
\[s_j=\delta\beta_j\in\bZ_{>0}.\]
Then
\[\sum_{j=1}^ms_j=\delta.\]
We define the $\delta$-periodic sequence $(a_k)_{k\in\bZ_{\geqslant 0}}$ whose first $\delta$ terms are:
\[\underbrace{1,\dots,1}_{s_1\text{ times}},\underbrace{2,\dots,2}_{s_2\text{ times}},\dots,\underbrace{m,\dots,m}_{s_m\text{ times}}.\]
Conditions 1, 2 and 3 then define a unique sequence~$\omega$ which is given by:
	\begin{equation}
	\omega\left(j\delta+\sum_{\ell=1}^{c-1}s_{\ell}+l\right)=\Big((j+1)s_1,\dots,(j+1)s_{c-1},js_c+l,js_{c+1},\dots,js_{m}\Big),
	\end{equation}
for $j\in\bZ_{\geqslant 0}$, $c\in\{1,\dots,m\}$ and $l\in\{1,\dots,s_c\}$ (if $c=1$, we consider the sum as empty).

Let~$k\in\bZ_{>0}$ and let~$j=\left\lfloor \frac{k}{\delta} \right\rfloor$. Then $k-\delta <j\delta\leqslant k$, and for every~$i\in\{1,\dots,m\}$ we have:
	\begin{equation*}
	\omega_i(k)\leqslant (j+1)s_i\leqslant \left(\left\lfloor \frac{k}{\delta}\right\rfloor+1\right)\beta_i\delta
	\leqslant \beta_ik+\beta_i\delta.
	\end{equation*}
Moreover,
	\[\omega_i(k)\geqslant js_i\geqslant \left\lfloor \frac{k}{\delta}\right\rfloor\beta_i\delta
	\geqslant \beta_ik-\beta_i\delta.\]
Hence, the sequence $(\omega(k))_k$ satisfies Condition~4: ~for every~$i\in\{1,\dots,m\}$,
	\[\omega_{i}(k)\underset{k\to\infty}{=}\beta_{i}k+O(1).\qedhere\]
\end{proof}

From now on, we make the following hypothesis on the derivation speeds. Let $\beta_1,\dots,\beta_m$ be positive rational numbers whose sum equals~1. We assume that the sequence $\omega(k)$ which describes the derivation speeds is as in Lemma~\ref{beta omega}, namely for every $i\in\{1,\dots,m\}$, the order of vanishing~$\omega_i(k)$ along~$\widehat V_i$ we impose to the elements of~$E^{k}_D$ satisfies
	\begin{equation}\label{hypothese ordre annulation}
	\omega_{i}(k)\underset{k\to\infty}{=}\beta_{i}k+O(1).
	\end{equation}
Then, for every~$i\in\{1,\dots,m\}$ the sequence $(k-\frac{\omega_i(k)}{\beta_i})_{k\in\bZ_{\geqslant 0}}$ is bounded.
Let~$b$ be a non-negative integer which is an upper bound of these sequences. Then we can write, for every $i\in\{1,\dots,m\}$ and every $k\in\bZ_{\geqslant 0}$,
	\begin{equation}\label{ordre de derivation}
	\omega_{i}(k)=\beta_{i}(k-b)+r_i(k),
	\end{equation}
where $(r_i(k))_{k\in\bZ_{\geqslant 0}}$ is a bounded sequence of non-negative integers.

\subsection{Estimation of the height of the evaluation morphisms}\label{hauteur des morphismes}

\begin{prop}\label{maj hauteur}
Let $\lambda$ be a positive real number such that
	\begin{equation}\label{def lambda degres}
	\lambda\rho < 1.
	\end{equation}
Then there exists a real number~$C>0$ such that, for every non-negative integers~$k,D$, every $l\in\{1,\dots,d_{a_k}\}$, and every $\ol{\alpha_{a_k}} >\alpha_{a_k}$,
\[h(\varphi^{k,l}_{D})\leqslant C(k+D)-[K:\bQ]\left(\lambda-\ol{\alpha_{a_k}} d_{a_k}\beta_{a_k}\right)k\log k+[K:\bQ]\lambda k\log D.\]
\end{prop}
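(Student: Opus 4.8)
The plan is to bound $h(\varphi^{k,l}_D)=\sum_{v}h_{v}(\varphi^{k,l}_D)$ by splitting the sum over the places~$v$ of~$K$ into those not above~$p_0$, handled by $\alpha$-arithmeticity, and those above~$p_0$, handled by the uniformization of order at most~$\rho$. The preliminary reduction is the observation made at the end of Paragraph~\ref{morphisme evaluation en plusieurs points}: $\varphi^{k,l}_D$ is the restriction to $E^{k,l-1}_D$ of the one-conjugate evaluation morphism $\varphi^{n_k}_{D,\widehat V_{\xi_{a_k}^l}}$ of~\eqref{morphisme evaluation 1 des conjugues}, whose target $\Sym^{n_k}\Omega^1_{\widehat V_{\xi_{a_k}^l}}\otimes L^D_{|\xi_{a_k}^l}$ is a line; since restricting a linear map to a subspace and factoring it through the quotient by its kernel cannot increase any operator norm, $h_{v}(\varphi^{k,l}_D)\leqslant h_{v}(\varphi^{n_k}_{D,\widehat V_{\xi_{a_k}^l}})$ at every place~$v$. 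I would also record that $n_k=\omega_{a_k}(k)=\beta_{a_k}(k-b)+r_{a_k}(k)$ with $(r_{a_k}(k))_k$ bounded by~\eqref{ordre de derivation}, so that $n_k\log n_k=\beta_{a_k}k\log k+O(k)$ and $n_k+D=O(k+D)$, and that $\sum_{i=1}^{m}\omega_i(k)=k+O(1)$ since $\sum_i\beta_i=1$.

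For the places $v\nmid p_0$, I would use that $\widehat V_{a_k}$ is $\alpha_{a_k}$-arithmetic: the closed point~$x_{a_k}$ has residue field~$K_{a_k}$ of degree~$d_{a_k}$, so by Definition~\ref{def alpha-arithmetique point ferme} the conjugate formal subscheme $\widehat V_{\xi_{a_k}^l}$ is $\alpha_{a_k}$-arithmetic over~$K$, and Definitions~\ref{def alpha-arithmetique} and~\ref{def S,alpha-arithmetique}, applied with the finite set $S=\{v:\ v\mid p_0\}$ and with the pair of integers $(n_k,D)$ in place of $(k,D)$, give, for every $\ol{\alpha_{a_k}}>\alpha_{a_k}$, constants $C$ and $(C_v)_v$ with $\sum_{v\nmid p_0}h_{v}(\varphi^{k,l}_D)\leqslant [K:\bQ]\,\ol{\alpha_{a_k}}\,d_{a_k}\,\beta_{a_k}\,k\log k+C(k+D)$ and $h_{v}(\varphi^{k,l}_D)\leqslant C_v(k+D)$ for every place~$v$. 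This packaging --- which tracks the degree~$d_{a_k}$ of the closed point through its $d_{a_k}$ conjugates and substitutes $n_k=\beta_{a_k}k+O(1)$ --- is Lemma~\ref{alpha arithmetique degre}.

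The heart of the argument is the estimate at the places above~$p_0$ (Lemma~\ref{bonne maj morphisme d'evaluation}), which is the incarnation on the affine curve $M\setminus T$ of the Schwarz lemma of the classical Schneider--Lang theorem. By Lemma~\ref{ordre de croissance d'une section} I would fix a non-zero section $\eta\in\Gamma(M\setminus T,\Theta^*(L^{-1}))$ not vanishing at $w_1,\dots,w_m$ and satisfying $\|\eta(z)\|\leqslant A_1\exp(A_2|u_\tau(z)|^{-\rho_\tau})$ near each $\tau\in T$, with $\sum_\tau\rho_\tau\leqslant\rho$. For $s\in E^{k,l-1}_D$ the function $g=\Theta^*(s)\otimes\eta^{\otimes D}$ is holomorphic on $M\setminus T$ and vanishes to order $\geqslant\omega_i(k)$ at each~$w_i$, hence to total order $\sum_i\omega_i(k)=k+O(1)$ on $\{w_1,\dots,w_m\}$. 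Removing from~$M$ small disks $\{|u_\tau|<\varepsilon_\tau\}$ around the points $\tau\in T$ and applying the maximum principle on the compact remainder --- after dividing~$g$ by a meromorphic function of~$M$ having a zero of order $\omega_i(k)$ at~$w_i$ and poles of total order $O(k)$ at~$T$, built by Riemann--Roch as in the proof of Lemma~\ref{ordre de croissance d'une section} --- bounds the Taylor coefficient of order~$n_k$ of~$g$ at~$w_{a_k}$, hence, up to a factor $e^{O(k+D)}$ coming from the metrics, $\|\varphi^{k,l}_D(s)\|$ at the distinguished place above~$p_0$ (for the conjugate that is the one uniformized by~$\Theta$) by a quantity of the shape $\exp\big(-k\log(1/\varepsilon)+A_2\sum_\tau D\,\varepsilon_\tau^{-\rho_\tau}+O(k+D)\big)\,\|s\|_\infty$, where $\varepsilon\asymp\min_\tau\varepsilon_\tau$, the exponent~$k$ of $1/\varepsilon$ being exactly the total order of vanishing above and the term $A_2\sum_\tau D\,\varepsilon_\tau^{-\rho_\tau}$ coming from the order-$\rho$ bound on~$\|\eta\|$ along the boundary circles. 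Choosing $\varepsilon_\tau\asymp(D/k)^{\lambda}$ makes $\sum_\tau D\,\varepsilon_\tau^{-\rho_\tau}=O\big(D^{1-\lambda\rho}k^{\lambda\rho}\big)=O(k+D)$ --- this is the only place the hypothesis $\lambda\rho<1$ is used, the case $k<D$ being treated by keeping the radii bounded below --- while $-k\log(1/\varepsilon)=-\lambda k\log(k/D)=-\lambda k\log k+\lambda k\log D$. Taking the supremum over~$s$, and distributing over the places above~$p_0$ by means of Lemma~\ref{comparaison hauteur morphisme evaluation en les conjugues} (to reach, at each such place, the conjugate~$l$ that is the uniformized one) and of the identity $\sum_{v\mid p_0}[K_v:\bQ_{p_0}]=[K:\bQ]$, I would obtain $\sum_{v\mid p_0}h_{v}(\varphi^{k,l}_D)\leqslant -[K:\bQ]\,\lambda\,k\log k+[K:\bQ]\,\lambda\,k\log D+C(k+D)$. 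Adding this to the bound of the previous paragraph gives exactly the inequality of Proposition~\ref{maj hauteur}. The hard part will be this last estimate --- making the maximum-principle argument rigorous and uniform in~$k$ and~$D$ on an arbitrary complex or rigid-analytic affine curve (the right substitute for the radius of the ``big disk'', the existence and control of the auxiliary meromorphic function, the size of the boundary values) and carrying through the number-field normalizations when passing from the distinguished place to all places above~$p_0$; granting Lemmas~\ref{alpha arithmetique degre} and~\ref{bonne maj morphisme d'evaluation}, the combination above is purely formal.
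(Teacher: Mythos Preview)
Your overall architecture is right---split $h(\varphi^{k,l}_D)$ into places handled by $\alpha$-arithmeticity and places above $p_0$ handled by a Schwarz-type estimate, and observe that $\varphi^{k,l}_D$ is dominated by $\varphi^{n_k}_{D,\widehat V_{\xi_{a_k}^l}}$. But the key step fails: your appeal to Lemma~\ref{comparaison hauteur morphisme evaluation en les conjugues} to propagate the Schwarz bound to \emph{all} embeddings $\sigma:K\hookrightarrow\bC_{p_0}$ does not work. That lemma says $h_v(\varphi^{n_k}_{D,\widehat V_{\sigma_2}})=h_{\gamma^{-1}v}(\varphi^{n_k}_{D,\widehat V_{\sigma_1}})$ with $\gamma\sigma_1=\sigma_2$; if $v$ corresponds to $\sigma$, then $\gamma^{-1}v$ corresponds to $\sigma\circ\gamma$, and one computes $(\sigma\circ\gamma)\circ\sigma_1=\sigma\circ\sigma_2$. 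Hence the composite embedding of $K_{a_k}$ into $\bC_{p_0}$ is \emph{invariant} under the move you propose, so you never reach the distinguished embedding $\sigma_{a_k}$ unless you started there. The Schwarz estimate of Lemma~\ref{bonne maj morphisme d'evaluation} therefore applies only at the $[K:\bQ]/d_{a_k}$ embeddings with $\sigma\circ\sigma_{a_k}^l=\sigma_{a_k}$, not at all $[K:\bQ]$ of them.

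Relatedly, your bound for the places $v\nmid p_0$ carries an unexplained factor $d_{a_k}$: applying Definition~\ref{def S,alpha-arithmetique} to the single conjugate $\widehat V_{\xi_{a_k}^l}$ with the pair $(n_k,D)$ gives $[K:\bQ]\,\ol{\alpha_{a_k}}\,n_k\log n_k=[K:\bQ]\,\ol{\alpha_{a_k}}\,\beta_{a_k}\,k\log k+O(k+D)$, with no $d_{a_k}$; the ``$d_{a_k}$ conjugates'' do not enter because $l$ is fixed. The paper's proof splits the embeddings into $\bC_{p_0}$ into the $[K:\bQ]/d_{a_k}$ ``good'' ones (Schwarz) and the remaining ones (absorbed into the $\alpha$-arithmetic bound via Lemma~\ref{alpha arithmetique degre}), which yields the coefficient $\frac{[K:\bQ]}{d_{a_k}}\lambda$ in front of $k\log(k/D)$---this is the form actually used downstream in~\eqref{IP}. (Your sketch of the Schwarz argument on $M\setminus T$ is otherwise on track; the paper implements it by multiplying by a power of an auxiliary rational function $R_a$ with poles at the $w_i$ and zeros at $T$, built by Riemann--Roch, rather than dividing by one with zeros at the $w_i$.)
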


In order to prove this proposition which gives a control of the height of the evaluation morphisms~${\varphi^{k,l}_D}$, we prove two lemmas. Lemma~\ref{alpha arithmetique degre} gives an upper bound for the sum of heights of the evaluation morphism at all but finitely many places, unsing the hypothesis of $\alpha$-arithmeticity of the formal subschemes. And Lemma~\ref{bonne maj morphisme d'evaluation} provides a better upper bound at some places of~$K$, thanks to the uniformization of~$\widehat V$.

\begin{lemme}\label{alpha arithmetique degre} For every finite set~$S$ of embeddings of~$K$ in~$\bC_{p_0}$, for all integers~$k\geqslant 0,D\geqslant 1$, for every $l\in\{1,\dots,d_{a_k}\}$, for every~$\ol{\alpha_{a_k}} >\alpha_{a_k}$, there exists a non-negative real number~$C$ such that
	\[\sum_{p\leqslant\infty}\sum_{\substack{\sigma:K\hookrightarrow\bC_{p},\\ \sigma\notin S}}h_{\sigma}(\varphi^{k,l}_D)\leqslant \ol{\alpha_{a_k}}[K:\bQ]n_k\log n_k+C(k+D).\]
\end{lemme}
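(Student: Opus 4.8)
The plan is to reduce each $h_\sigma(\varphi^{k,l}_D)$ to a height of a single‑point evaluation morphism along a conjugate of~$\widehat V_{a_k}$, to which the $\alpha_{a_k}$‑arithmeticity hypothesis applies directly, and then to repackage the sum over embeddings into the various~$\bC_p$ as a sum over the places of~$K$. First I would record that, by construction of the refined filtration, $\varphi^{k,l}_D$ is obtained from the evaluation morphism $\varphi^{n_k}_{D,W}$ along the conjugate formal subscheme $W:=\widehat V_{\sigma^l_{a_k}(\xi_{a_k})}$ — a smooth formal subscheme of~$\widehat X$ at a $K$‑rational point — simply by restricting the domain to $E^{k,l-1}_D\subseteq E^{n_k}_{D,W}$ and passing to the quotient by its kernel $E^{k,l}_D$; this is exactly the identification $\varphi^{k,l}_D(s)=\varphi^{n_k}_{D,\widehat V_{\sigma^l_{a_k}(\xi_{a_k})}}(s)$ recalled after~\eqref{morphisme evaluation double filtration}. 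Since both source spaces carry the structure induced from~$\overline{\mathscr E}_D$, restriction of the domain and passage to the quotient by the kernel can only decrease the operator norm at each place, so $h_v(\varphi^{k,l}_D)\leqslant h_v(\varphi^{n_k}_{D,W})$ for every place~$v$ of~$K$.

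Next I would invoke the hypothesis. By Definition~\ref{def alpha-arithmetique point ferme} and the remark following it, $W$ is an $\alpha_{a_k}$‑arithmetic formal subscheme over~$K$ in the sense of Definition~\ref{def alpha-arithmetique}. Let $S'$ be the finite set of places of~$K$ induced by the embeddings of~$S$ (these all lie above~$p_0$). Applying Definition~\ref{def alpha-arithmetique} with this~$S'$, with~$\overline{\alpha_{a_k}}$, and with the integer~$n_k$ in the role of the order parameter, one obtains constants $C>0$ and $(C_v)_{v\in\Sigma_K}$, \emph{independent of $k$ and~$D$}, with
\[\frac{1}{[K:\bQ]}\sum_{v\in\Sigma_K\setminus S'}h_v(\varphi^{n_k}_{D,W})\leqslant\overline{\alpha_{a_k}}\,n_k\log n_k+C(n_k+D),\qquad h_v(\varphi^{n_k}_{D,W})\leqslant C_v(n_k+D).\]

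It then remains to pass from the sum over embeddings to the sum over places. By Proposition~\ref{hauteur somme plongements} and the chosen normalisations, all embeddings $\sigma:K\hookrightarrow\bC_p$ inducing a given place~$v$ yield the same value of $h_\sigma(\varphi^{k,l}_D)$ and sum to $h_v(\varphi^{k,l}_D)$; hence dropping the embeddings of~$S$ only removes a non‑negative fraction of $h_v(\varphi^{k,l}_D)$ at the finitely many places $v\in S'$ and nothing elsewhere, so that
\[\sum_{p\leqslant\infty}\ \sum_{\substack{\sigma:K\hookrightarrow\bC_p\\\sigma\notin S}}h_\sigma(\varphi^{k,l}_D)\leqslant\sum_{v\in\Sigma_K\setminus S'}h_v(\varphi^{k,l}_D)+\sum_{v\in S'}\max\bigl(h_v(\varphi^{k,l}_D),0\bigr).\]
Bounding $h_v(\varphi^{k,l}_D)$ by $h_v(\varphi^{n_k}_{D,W})$ via the first step, feeding in the two estimates above (the first for the sum over $\Sigma_K\setminus S'$, the second for the finite sum over~$S'$), and using $n_k\leqslant k$ together with $\sum_{v\in S'}C_v<\infty$, one arrives at the asserted inequality with constant $[K:\bQ]\,C+\sum_{v\in S'}C_v$.

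The points that need a little care — but none of which is a serious obstacle — are the operator‑norm comparison for a restriction followed by a quotient in the first step (the standard estimate used throughout, \cf \citep{bost_chambert-loir_analyticcurves}, Prop.~4.7), the Galois‑equivariance of the norms used to treat all conjugates of~$\widehat V_{a_k}$ on the same footing, which is Lemma~\ref{comparaison hauteur morphisme evaluation en les conjugues}, and the uniformity in $(k,D)$ of the constants supplied by Definition~\ref{def alpha-arithmetique}; everything else is bookkeeping between the filtration index $(k,l)$, the single‑point order~$n_k$, and the two ways of organising the sum over places. The convention $0\log 0=0$ disposes of the degenerate cases $n_k\in\{0,1\}$.
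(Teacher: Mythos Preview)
Your argument is correct and follows essentially the same route as the paper: bound $h_v(\varphi^{k,l}_D)$ by $h_v(\varphi^{n_k}_{D,W})$ using that $\varphi^{k,l}_D$ is a restriction of the single-point evaluation morphism, then apply the $\alpha_{a_k}$-arithmeticity of the conjugate $W=\widehat V_{\sigma^l_{a_k}(\xi_{a_k})}$. The paper's own proof is more telegraphic---it simply writes the conclusion after these two observations---whereas you spell out the translation between the sum over embeddings $\sigma:K\hookrightarrow\bC_p$ and the sum over places of~$K$, and you handle explicitly the possible sign of $h_v$ at the finitely many places in~$S'$ via the $\max(h_v,0)$ device; this extra care is sound and arguably clarifies a point the paper leaves implicit.
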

\begin{proof}
For every place~$v$ of~$K$, the height at~$v$ of the evaluation morphism $\varphi^{k,l}_D$ defined (see~\eqref{morphisme evaluation double filtration}) by	
	\[\varphi^{k,l}_D:E^{k,l-1}_D/E^{k,l}_D\hookrightarrow\Sym^{n_k}\Omega^1_{\widehat V_{\xi_{a_k}^l}\otimes L^D_{|\xi_{a_k}^l}},\] satisfies
	\[h_v(\varphi^{k,l}_D)\leqslant h_v\left(\varphi^{n_k}_{D,\widehat V_{\sigma^l_{a_k}(\xi_{a_k})}}\right),\]
since it is the restriction of the morphism $\varphi^{n_k}_{D,\widehat V_{\sigma^l_{a_k}(\xi_{a_k})}}$ (\ref{morphisme evaluation 1 des conjugues}) to a smaller domain.	
Let $\ol{\alpha_{a_k}} >\alpha_{a_k}$. Let $k\geqslant 0$ and $D>0$ be two integers. Since $\widehat V_{a_k}$ is \mbox{$\alpha$-arithmetic}, for every embedding $\sigma$ of $K_{a_k}$ in~$K$ the formal subscheme $\widehat V_{\sigma(\xi_{a_k})}$ is \mbox{$\alpha_{a_k}$-arithmetic} (see Definition~\ref{def alpha-arithmetique point ferme}), and hence
\[\sum_{p\leqslant\infty}\sum_{\substack{\sigma:K\hookrightarrow\bC_p,\\ \sigma\notin S}}h_{\sigma}(\varphi^{k,l}_D)\leqslant \ol{\alpha_{a_k}}[K:\bQ]n_k\log n_k+C(k+D).\qedhere\]
\end{proof}

By definition, for every embedding $\sigma$ of~$K$ in~$\bC_{p_0}$, the height of the map $\varphi^k_D$ associated with~$\sigma$ is 
$h_{\sigma}(\varphi^{k,l}_{D})=\log\|\varphi^{k,l}_{D}\otimes_{K,\sigma}\bC_{p_0}\|.$

Thanks to the uniformization, for every embedding~$\sigma$ of~$K$ in~$\bC_{p_0}$ such that $\sigma(\xi_{a_k}^l)=\xi_{a_k}$
we get a ``good'' inequality for~$h_{\sigma}(\varphi^{k,l}_{D})$, as stated in the following lemma.

\begin{lemme}\label{bonne maj morphisme d'evaluation}
Let $\lambda$ be a positive real number such that $\lambda\rho < 1$.

There exists a non-negative real number~$C$ such that, for every $k\in\bZ_{\geqslant 0}$, $D\in\bZ_{>0}$ and every~$l\in\{1,\dots,d_{a_k}\}$, for every embedding~$\sigma$ of~$K$ in~$\bC_{p_0}$ such that $\sigma(\xi_{a_k}^l)=\xi_{a_k}$,
	\begin{equation}\label{bonne maj phi^k_D}
	h_{\sigma}(\varphi^{k,l}_D)\leqslant C(k+D)-\lambda k\log \frac{k}{D}.\end{equation}
\end{lemme}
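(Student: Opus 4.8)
The plan is to carry out the \emph{Schwarz lemma} step of the Schneider--Lang method on the curve~$M$, in the form announced in the introduction: a maximum principle applied on~$M$ with small ``disks'' around the points of~$T$ removed, replacing the classical maximum principle on a large disk in~$\bC$.

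Fix an embedding~$\sigma$ of~$K$ in~$\bC_{p_0}$ with $\sigma(\xi^l_{a_k})=\xi_{a_k}$; for such~$\sigma$, the formal curve~$\widehat V_{\xi^l_{a_k}}$, viewed in~$\bC_{p_0}$, is the germ at~$w_{a_k}$ of the image of the uniformizing map $\Theta\colon M\setminus T\to X(\bC_{p_0})$. By Definition~\ref{uniformisation ordre}, fix a non-zero section $\eta\in\Gamma(M\setminus T,\Theta^*L^{-1})$ of order at most~$(\rho_\tau)_{\tau\in T}$ with $\sum_\tau\rho_\tau\leqslant\rho$, which by Lemma~\ref{ordre de croissance d'une section} we may take non-vanishing at~$w_1,\dots,w_m$. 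For a non-zero $s\in E^{k,l-1}_D$, set $g_s=\Theta^*s\otimes\eta^{\otimes D}$, a holomorphic function on~$M\setminus T$ (in the sense of rigid analytic geometry when $p_0$ is finite). Three facts will be used: (i) since $\varphi^{k,l}_D(s)=\varphi^{n_k}_{D,\widehat V_{\sigma^l_{a_k}(\xi_{a_k})}}(s)$ is the Taylor coefficient of order~$n_k$ of the restriction of~$s$ to~$\widehat V_{\xi^l_{a_k}}$, and $\eta(w_{a_k})^{\otimes D}$ is a non-vanishing factor of norm bounded below by~$C^{-D}$, one has $\|\varphi^{k,l}_D(s)\|_{\sigma}\leqslant C^{k+D}\,|c_{n_k}(g_s)|$, where $c_{n_k}(g_s)$ is the coefficient of order~$n_k$ of~$g_s$ at~$w_{a_k}$ in a fixed local coordinate; (ii) since $s\in E^k_D$, the function~$g_s$ vanishes at each~$w_j$ to order at least~$\omega_j(k)$, hence to total order $\sum_{j=1}^m\omega_j(k)=k+O(1)$, by~\eqref{ordre de derivation} and $\sum_j\beta_j=1$; (iii) from $\|\Theta^*s\|\leqslant\|s\|_{\sigma,\infty}$ and Inequality~\eqref{ordre de croissance} for~$\eta$, for every~$\tau\in T$ there are constants $A_1,A_2>0$ with $|g_s(z)|\leqslant\|s\|_{\sigma,\infty}A_1^D\exp\!\bigl(DA_2|u_\tau(z)|^{-\rho_\tau}\bigr)$ for~$z$ near~$\tau$.

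I would then prove the following Schwarz estimate: for a real parameter $R\geqslant 1$, remove from~$M$ the disks $\{|u_\tau|<R^{-1/\rho_\tau}\}$ (empty when $\rho_\tau=0$) to get a domain~$M_R$ which contains all~$w_j$ once $R$ is larger than a fixed constant; then, using~(ii) and~(iii), a maximum-principle argument on~$M_R$ gives, with a constant~$C_0$ independent of~$k$, $D$ and~$R$,
\[\log|c_{n_k}(g_s)|\leqslant\log\|s\|_{\sigma,\infty}+C_0(k+D)+C_0DR-\tfrac1\rho\,k\log R.\]
In the complex case this is done either by dividing~$g_s$ by an auxiliary meromorphic function on~$M$ supplied by Riemann--Roch, with divisor of zeros $\sum_j\omega_j(k)[w_j]$ and poles on~$T$ of orders proportional to the~$\rho_\tau$ (so that on the circle $|u_\tau|=R^{-1/\rho_\tau}$ it gains a factor $R^{-(k+O(1))/\rho}$ against the bound~(iii)), or, equivalently, by integrating the subharmonic function $\log|g_s|$ against the harmonic measure of~$M_R$ seen from~$w_{a_k}$ and subtracting the Green's-function contributions of the zeros at the~$w_j$; in the $p$-adic case one uses the maximum modulus principle for rigid-analytic functions. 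Then, choosing $R$ of order $k/D$ (which is $\geqslant 1$ as soon as $k$ exceeds a fixed multiple of~$D$) and combining with~(i), one gets in that range $h_{\sigma}(\varphi^{k,l}_D)\leqslant C(k+D)-\tfrac1\rho\,k\log(k/D)$, which is at most $C(k+D)-\lambda k\log(k/D)$ since $\lambda<1/\rho$ and $\log(k/D)\geqslant0$. In the complementary range $k\leqslant C'D$ one uses the trivial bound $h_{\sigma}(\varphi^{k,l}_D)\leqslant h_{\sigma}\bigl(\varphi^{n_k}_{D,\widehat V_{\sigma^l_{a_k}(\xi_{a_k})}}\bigr)\leqslant C_{\sigma}(n_k+D)\leqslant C_{\sigma}(k+D)$ coming from the $\alpha$-arithmeticity of~$\widehat V_{a_k}$ (Definition~\ref{def alpha-arithmetique point ferme}); since there $-\lambda k\log(k/D)\geqslant-\lambda k\log C'$, enlarging~$C$ absorbs this term and gives the stated inequality for all~$k,D$.

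The hard part is the Schwarz estimate, and more precisely keeping all constants independent of~$k$ and~$D$ while the total order of vanishing $\sum_j\omega_j(k)$ and the pole orders of the auxiliary function both grow like~$k$: one must control the auxiliary function (or the relevant Green's-function and Robin-constant data of~$M_R$) uniformly in~$k$, and deal with the bounded number of ``excess'' zeros forced by the genus of~$M$, which contribute only an $O(k)$ correction. This is the analytic heart of the argument, to be carried out along the lines of the estimates of C.~Gasbarri~\citep{gasbarri_SL}, D.~Bertrand~\citep{bertrand_SLsurdomainesnonsimplementconnexes} and J.-B.~Bost~\citep{bost_algebraic-leaves}.
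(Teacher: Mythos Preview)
Your outline is correct and the strategy is the same as the paper's: pull back to a holomorphic function $g_s=\Theta^*s\cdot\eta^D$ on $M\setminus T$, apply a Schwarz/maximum-principle estimate on~$M$ with small disks around~$T$ removed, then optimize the radius. Your setup (i)--(iii), the passage from $\|\varphi^{k,l}_D(s)\|_\sigma$ to the Taylor coefficient of~$g_s$, and the handling of the small-$k/D$ range via the trivial bound are all exactly as in the paper.

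The one substantive difference is in how the auxiliary function is built, and it is precisely what dissolves the difficulty you flag as ``the hard part''. You propose constructing, for each~$k$, a meromorphic function with divisor of zeros $\sum_j\omega_j(k)[w_j]$ and poles on~$T$; this function depends on~$k$, and controlling its leading coefficients at the~$w_j$ uniformly in~$k$ is indeed delicate. The paper sidesteps this entirely: it constructs a \emph{single} rational function~$R_a$ on~$M$ (Lemma~\ref{fonction auxiliaire R ordres differents}), with a pole of exact order $\lfloor a\beta_j\rfloor$ at each~$w_j$ and a zero of order at least $\lceil a\mu_\tau\rceil$ at each~$\tau\in T$, for one fixed integer~$a$ and fixed weights $\mu_\tau\geqslant\lambda\rho_\tau$ with $\sum_\tau\mu_\tau<1$. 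The auxiliary function used at step~$k$ is then simply the power~$R_a^{k-b}$, and the Schwarz bound comes from the maximum principle applied to $f^aR_a^{k-b}$ on the sublevel set $\{|R_a|\geqslant r^a\}$ (Proposition~\ref{principe du max ultrametrique} in the $p$-adic case). Because $R_a$ is fixed, all the constants you were worried about---its leading coefficients at the~$w_j$, the comparison $|R_a(z)|\geqslant A_3|u_\tau(z)|^{m_\tau}$ near~$\tau$, the excess zeros imposed by the genus---are fixed once and for all; the dependence on~$k$ enters only through the exponent. The price paid is that one obtains the coefficient~$\lambda$ rather than your~$1/\rho$ in the Schwarz term: the hypothesis $\lambda\rho<1$ is spent exactly here, to make the choice $\mu_\tau\geqslant\lambda\rho_\tau$, $\sum_\tau\mu_\tau<1$ possible. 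But that is already the statement to be proved.

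So your plan would work, but the paper's device of ``one auxiliary function, raised to a variable power'' turns what you correctly identify as the analytic heart of the argument into a routine computation.
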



Since the family of formal subschemes $(\widehat V_1,\dots,\widehat V_m)$ admits a uniformization of order at most~$\rho>0$ at~$p_0$, there exists a projective, connected, smooth curve~$M$ over~$\bC_{p_0}$, a finite subset $T\subseteq M$, a holomorphic map
\[\Theta: M\setminus T \rightarrow X(\bC_{p_0}),\]  and distinct points $w_1,\dots,w_m$ of $M\setminus T$ such that $\Theta(w_j)=\xi_j$ and the germ of formal curve parameterized by~$\Theta$ at~$\xi_j$ coincides with~$\widehat V_j$ (\cf ~Definition~\ref{uniformisation}). Since the uniformization is of order at most~$\rho$, by Definition~\ref{def ordre de croissance} there exist a non-zero section $\eta\in\Gamma(M\setminus T,\Theta^*(L^{-1}))$ and a family $(\rho_{\tau})_{\tau}$ of non-negative real numbers such that, if $u_{\tau}$ a local parameter of~$M$ at~$\tau\in T$, there exist positive real numbers $A_1$, $A_2$ such that
	\begin{equation*}
	\|\eta(z)\|\leqslant A_1e^{A_2|u_{\tau}(z)|^{-\rho_{\tau}}}\ \ \text{ for all }z \text{ sufficiently close to } \tau,   	
	\end{equation*}
and $\sum_{\tau\in T}\rho_{\tau}\leqslant\rho.$ By Lemma~\ref{section ordre de croissance}, we can assume that $\eta$ does not vanish at the points $w_1,\dots,w_m$.


\begin{lemme}\label{fonction auxiliaire R ordres differents}
Let $M$ be a projective, smooth, connected algebraic curve over an algebraically closed field~$\fkF$ and let $T,W\subseteq M(\fkF)$ be finite disjoint subsets. For every $\tau\in T$ let~$\mu_t$ be a positive real number. Assume that $\sum_{\tau\in T}\mu_{\tau}<1$.  For every $w\in W$, let~$\beta_w>0$ be such that $\sum_{w\in W}\beta_w=1$. Then for every big enough integer~$a$, there exists a rational function~$R_a$ on~$M$, regular on~$M\setminus W$ with a pole of order exactly equal to~$\lfloor a\beta_w\rfloor$ at $w\in W$ and a zero of order $m_{\tau}\geqslant \lceil a\mu_{\tau}\rceil$ at every $\tau\in T$.
\end{lemme}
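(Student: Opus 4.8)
The plan is to obtain $R_a$ as a sufficiently general element of a linear system cut out by Riemann--Roch, in the same spirit as the auxiliary function built in the proof of Lemma~\ref{ordre de croissance d'une section}. For an integer $a$ I would introduce the divisor
\[
\Delta_a=\sum_{w\in W}\lfloor a\beta_w\rfloor\,[w]-\sum_{\tau\in T}\lceil a\mu_\tau\rceil\,[\tau]
\]
on $M$. By definition of $\Osheaf_M(\Delta_a)$, every nonzero $f\in H^0(M,\Osheaf_M(\Delta_a))$ satisfies $\diviseur(f)\geq-\Delta_a$, hence is regular on $M\setminus W$, vanishes to order at least $\lceil a\mu_\tau\rceil$ at each $\tau\in T$, and has a pole of order \emph{at most} $\lfloor a\beta_w\rfloor$ at each $w\in W$. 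The only thing that needs to be arranged is that these pole orders are \emph{exactly} $\lfloor a\beta_w\rfloor$, i.e.\ that $R_a$ lies outside the subspace $H^0(M,\Osheaf_M(\Delta_a-[w]))$ for every $w\in W$.

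The second step is the degree estimate, which is where the hypothesis $\sum_{\tau\in T}\mu_\tau<1$ enters: since $\sum_{w\in W}\beta_w=1$ one has $\deg\Delta_a=\sum_w\lfloor a\beta_w\rfloor-\sum_\tau\lceil a\mu_\tau\rceil=a\bigl(1-\sum_\tau\mu_\tau\bigr)+O(1)$, which tends to $+\infty$. Consequently, for all $a$ large enough, $\deg\Delta_a$ and all the $\deg(\Delta_a-[w])$, $w\in W$, exceed $2g-2$ and are $\geq g$; the corresponding $h^1$-terms then vanish, and Riemann--Roch gives $h^0(\Delta_a)=\deg\Delta_a+1-g\geq 1$ together with $h^0(\Delta_a-[w])=\deg\Delta_a-g=h^0(\Delta_a)-1$. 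Thus each $H^0(M,\Osheaf_M(\Delta_a-[w]))$ is a hyperplane of the nonzero finite-dimensional $\fkF$-vector space $H^0(M,\Osheaf_M(\Delta_a))$.

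Finally, since $\fkF$ is algebraically closed it is infinite, so a finite-dimensional $\fkF$-vector space is never the union of finitely many proper subspaces; I would choose $R_a$ in $H^0(M,\Osheaf_M(\Delta_a))$ lying in none of the hyperplanes $H^0(M,\Osheaf_M(\Delta_a-[w]))$, $w\in W$, and then read off from $\diviseur(R_a)\geq-\Delta_a$ (with the pole order at each $w$ forced to equal $\lfloor a\beta_w\rfloor$ precisely because $R_a\notin H^0(M,\Osheaf_M(\Delta_a-[w]))$) that $R_a$ has all the required properties. I do not expect a real obstacle here; the only point demanding a little care is the uniform book-keeping of the threshold $a_0$, so that $\deg\Delta_a>2g-2$, $\deg(\Delta_a-[w])>2g-2$, $h^0(\Delta_a)\geq 1$, and also $\lfloor a\beta_w\rfloor\geq 1$, $\lceil a\mu_\tau\rceil\geq 1$ all hold simultaneously, which is immediate from $\deg\Delta_a\to\infty$ and from $\beta_w,\mu_\tau>0$.
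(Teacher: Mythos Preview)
Your proof is correct and follows essentially the same approach as the paper: the paper defines the real divisor $\Delta=\sum_{w}\beta_w[w]-\sum_{\tau}\mu_\tau[\tau]$ and works with $\lfloor a\Delta\rfloor$, which is exactly your $\Delta_a$; it then applies Riemann--Roch in the same way to show that the $H^0(M,\Osheaf(\Delta_a-[w]))$ are hyperplanes and picks $R_a$ outside their union using that $\fkF$ is infinite.
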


\begin{proof}
We consider the divisor with real coefficients $\Delta$ given by
 	\[\Delta=\sum_{w\in W}\beta_w[w]-\sum_{\tau\in T}\mu_{\tau}[\tau].\]
Its degree $\deg(\Delta)=\sum_{w\in W}\beta_w-\sum_{\tau\in T}\mu_{\tau}$ is positive, by hypothesis. If~$D$ is a divisor on~$M$, denote by~$h^0(D)$ the dimension over~$\fkF$ of the space of sections~$H^0(M,\Osheaf_M(D))$. If~$D$ is a divisor with real coefficients, $D=\sum\lambda_P[P],$ with $\lambda_P\in\bR$ for all~$P$, we set $\lfloor D\rfloor$ the divisor with integral coefficients $D=\sum\lfloor \lambda_P\rfloor[P]$. Let~$K_M$ be a canonical divisor on~$M$ and let~$g$ denote de genus of~$M$. By the Riemann-Roch theorem, for every positive integer~$a$, we have :
	\[h^0(\lfloor a\Delta\rfloor)-h^0(K_M-\lfloor a\Delta\rfloor)=\deg(\lfloor a\Delta\rfloor)+1-g,\]
and, for every $w\in W$, 
	\[h^0(\lfloor a\Delta\rfloor -[w])-h^0(K_M-\lfloor a\Delta\rfloor +[w])=\deg(\lfloor a\Delta\rfloor )-g.\]
When $a$ goes to infinity, $\deg(\lfloor a\Delta\rfloor) = a \deg(\Delta)+ O(1)$, and then $\deg(\lfloor a\Delta\rfloor)\sim a \deg(\Delta)$ since $\deg(\Delta)>0$. Pick
$a$ big enough, so that $\deg(\lfloor a\Delta\rfloor)\geqslant 2g$.
Then,
	\[h^0(K_M-\lfloor a\Delta\rfloor)=h^0(K_M-\lfloor a\Delta\rfloor +[w])=0.\]
Therefore, the $\fkF$-vector spaces~$H^0(M,\Osheaf(\lfloor a\Delta\rfloor-[w]))$, for $w\in W$, are hyperplanes of $H^0(M,\Osheaf(\lfloor a\Delta\rfloor ))$. Since the field~$\fkF$ is infinite, there exists 
	\[R_a\in H^0(M,\Osheaf(\lfloor a\Delta\rfloor ))\setminus \bigcup_{w\in W} H^0(M,\Osheaf(\lfloor a\Delta\rfloor -[w])),\] 
which satisfies the conditions we were looking for.	
\end{proof}

\begin{proof}[Proof of Lemma~\ref{bonne maj morphisme d'evaluation}]
To simplify the notation, in this proof we will not indicate by a subscript that the absolute values and norms we consider are those at the place~$p_0$.

Let~$a$ be a positive integer, $(\mu_{\tau})_{\tau\in T}$ a family of positive real number whose sum is less than~$1$ and such that, for every~$\tau\in T$,
	\begin{equation}\label{choix mu_tau degres}
	\mu_{\tau}\geqslant \lambda\rho_{\tau}.
	\end{equation}
This can be done because we assumed $\lambda\rho<1$. 	
If $a$ is big enough, then by Lemma~\ref{fonction auxiliaire R ordres differents}, there exist a rational function~$R_a$ on~$M$, regular on~$M\setminus W$ with a pole of exact order~$\lfloor a\beta_{a_k}\rfloor$ at $w_1,\dots,w_m$ and a zero of order $m_{\tau}\geqslant \lceil a\mu_{\tau}\rceil$ at every $\tau\in T$.

Let $D\in\bZ_{\geqslant 0}$ and let $s\in E_D$.
Let~$f$ be the holomorphic function $\Theta^*(s)\eta^D$ on~\mbox{$M\setminus T$}. Let $k\in\bZ_{\geqslant 0}$ and $l\in\{1,\dots,d_{a_k}\}$ be such that $s\in E^{k,l}_D$. Then $f$ vanishes with order at least~$\omega_i(k)$ at $w_i$, for every $i\in\{1,\dots,m\}$. The image of the section~$s$ by the evaluation morphism~$\varphi^{k,l}_D$ is

	\begin{equation}\label{}
	\varphi_D^{k,l}(s)=c_{k}(\Theta_*\frac{\partial}{\partial z} (w_{a_k}))^{\otimes -n_k}\eta(w_{a_k})^{-D} \in \Sym^{n_k}(\Omega^1_{\widehat V_{\xi_{a_k}}})\otimes L^D_{|\xi_{a_k}},
	\end{equation}
where 
	\[c_{k}= \lim_{z\rightarrow w_{a_k}}\frac{(\Theta^*(s)\eta^D)(z)}{u_{a_k}(z)^{n_k}}
=\lim_{z\rightarrow w_{a_k}}\frac{f(z)}{u_{a_k}(z)^{n_k}},\]
$u_{a_k}$ being a local parameter of the curve~$M$ at~$w_{a_k}$ and \mbox{$n_k=\omega_{a_k}(k)$}, for every non-negative integer~$k$. 

Setting $C_0=\max(|\Theta_*\frac{\partial}{\partial z} (w_{a_k})|^{-1},|\eta(w_{a_k})|)$, we get
	\begin{equation}\label{phi et c_k}
	|\varphi_D^{k,l}(s)|\leqslant C_0^{k+D}|c_{k}|.\end{equation}

Set
	\begin{equation}\label{def nu_ak}
	\nu_{a_k}=a n_k-(k-b)\lfloor a\beta_{a_k}\rfloor.
	\end{equation}
This is a non-negative integer. It is indeed clearly the case if $k<b$, and if $k\geqslant b$ we have:
	\begin{align*}
	\nu_{a_k}=a n_k-(k-b)\lfloor a\beta_{a_k}\rfloor
	&=ar_{a_k}(k)+(k-b)(a\beta_{a_k}-\lfloor a\beta_{a_k}\rfloor)\geqslant 0.
	\end{align*}
Since
\[c_{k}=\lim_{z\rightarrow w_{a_k}}\frac{f(z)}{u_{a_k}(z)^{n_k}},\]
we have
\[|c_{k}|^a=\lim_{z\rightarrow w_{a_k}}\left|{f}^aR_a^{k-b} u_{a_k}^{-\nu_a(k)}(z)\right| \lim_{z\rightarrow w_{a_k}}{\left|R_a^{-1}u_{a_k}^{-\lfloor a\beta_{a_k}\rfloor}(z)\right|}^{k-b}.\]

The function~$R_a$ has a pole of order exactly~$\lfloor a\beta_{j}\rfloor$ at~$w_j$. Setting~$C_1=\max_{1\leqslant j\leqslant m}\lim_{z\rightarrow w_{j}}{\left|R_a(z)^{-1}u_{j}(z)^{-\lfloor a\beta_{j}\rfloor} \right|}^{\frac{1}{a}},$ we thus obtain
	\begin{equation}\label{c_k}|c_{k}|\leqslant C_1^{k-b}{\left(\lim_{z\rightarrow w_{a_k}}\left|{f}^aR_a^{k-b} u_{a_k}^{-\nu_a(k)}(z)\right| \right)}^{\frac 1 a}.\end{equation}

The function $f^aR_a^{k-b}={(\Theta^*(s)\eta^D)}^aR_a^{k-b}$ is holomorphic on~$M\setminus T$, by the hypothesis~\eqref{ordre de derivation} made on the orders of vanishing of~$\Theta^*(s)$ at the points~$w_1,\dots, w_m$. Let~$r$ be a positive real number. We apply to this function a maximum principle on the domain $\{|R_a(z)|\geqslant r^a\}$. If the place~$p_0$ is the Archimedean one, it is the usual maximum principle of complex analysis. If~$p_0$ is a ultrametric place, it is provided by the following proposition, which is proved in~\citep{bost_chambert-loir_analyticcurves} prop B.11.

\begin{prop}\label{principe du max ultrametrique}
Let $\fkF$ be a complete ultrametric field, and let~$M$ be a smooth, connected projective curve on~$\fkF$. Let~$f\in k(M)$ be a non-constant rational function, and let~$X$ be the Weierstraß domain
	\[X=\{x\in M(k) ; |f(x)|\leqslant 1\}.\]
Then, every affinoid function~$g$ on~$X$ ~is bounded. Moreover, there exists $x\in X$ such that \[|g(x)|=\sup_X|g|\ \text{ et }\ |f(x)|=1.\]
\end{prop}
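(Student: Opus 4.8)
The plan is to reduce the assertion to the elementary theory of the one-variable Tate algebra $A=\fkF\langle T\rangle$, the affinoid algebra of the closed unit disc $\overline D=\{|T|\le 1\}\subset\bbP^1$. A non-constant rational function $f$ on the smooth projective curve $M$ extends to a finite flat morphism $f\colon M\to\bbP^1$ of degree $d=\deg f$; since the poles of $f$ have $|f|=\infty$ they lie outside $X$, so $X=f^{-1}(\overline D)$ is the preimage of an affinoid subdomain under a finite morphism, hence an affinoid subdomain of $M^{\mathrm{an}}$, and $\mathcal{O}(X)$ is finite and flat over $A$ (via $T\mapsto f$), therefore free of rank $d$ because $A$ is a principal ideal domain.

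First I would handle boundedness and the value of the supremum simultaneously. Given an affinoid function $g\in\mathcal{O}(X)$, let $\chi_g(Y)=Y^d+c_{d-1}(T)Y^{d-1}+\dots+c_0(T)\in A[Y]$ be the characteristic polynomial of multiplication by $g$ on the free $A$-module $\mathcal{O}(X)$, so that $\chi_g(g)=0$ by Cayley--Hamilton. For a classical point $z_0\in\overline D(\fkF)$, specialising $T=z_0$ and splitting the Artinian $\fkF$-algebra $\mathcal{O}(X)\otimes_A A/(T-z_0)=\mathcal{O}(f^{-1}(z_0))$ into its local factors (on each of which $g-g(x)$ is nilpotent) shows that $\chi_g(Y)|_{T=z_0}=\prod_{x\in f^{-1}(z_0)}(Y-g(x))^{m_x}$. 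The standard ultrametric bound on the roots of a monic polynomial — namely $\max_j|\rho_j|=\max_{0\le i<d}|c_i|^{1/(d-i)}$, obtained from the Newton polygon or directly from the elementary symmetric functions — then yields
\[\max_{x\in f^{-1}(z_0)}|g(x)|=\max_{0\le i<d}|c_i(z_0)|^{1/(d-i)}.\]
Taking the supremum over $z_0\in\overline D(\fkF)$ and using that $\sup_{\overline D}|c_i|$ equals the (finite) Gauss norm $\|c_i\|$, we obtain $\sup_X|g|=\max_{0\le i<d}\|c_i\|^{1/(d-i)}<\infty$, which is the boundedness statement.

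For the maximum principle I would invoke the classical fact that for $h\in A$ the Gauss norm $\|h\|$ is already attained on the unit circle, at a classical point: rescaling so that $\|h\|=1$ and reducing modulo the maximal ideal of $\fkF$ gives a nonzero polynomial over the residue field, which is infinite since $\fkF$ is algebraically closed, and any $z$ with $|z|=1$ whose reduction avoids the finitely many zeros of that polynomial satisfies $|h(z)|=\|h\|$. Applying this to $c_{i_0}$, where $i_0$ realises the outer maximum above, I choose $z_0$ with $|z_0|=1$ and $|c_{i_0}(z_0)|=\|c_{i_0}\|$. Then $\max_{x\in f^{-1}(z_0)}|g(x)|\ge|c_{i_0}(z_0)|^{1/(d-i_0)}=\sup_X|g|$, while $f^{-1}(z_0)\subseteq X$ forces equality; any $x$ in this fibre at which the maximum is attained lies in $X$ and has $|f(x)|=|z_0|=1$, as required.

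The main obstacle is the structural first step: setting up the rigid-analytic picture correctly, in particular that $X=f^{-1}(\overline D)$ is an affinoid domain with $\mathcal{O}(X)$ free of rank $\deg f$ over $\fkF\langle T\rangle$. A secondary technical point is the fibrewise factorisation of the characteristic polynomial at the branch points of $f$ and over non-reduced fibres, where one must verify that multiplication by $g$ on each local Artinian factor has $g(x)$ as its unique eigenvalue, with multiplicity the length of that factor. Everything else reduces to the two elementary ultrametric lemmas above: the bound on the absolute values of the roots of a monic polynomial, and the attainment of the Gauss norm on the unit circle at a classical point.
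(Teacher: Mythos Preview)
The paper does not prove this proposition; it simply quotes it from \citep{bost_chambert-loir_analyticcurves}, Proposition~B.11. Your argument is correct and follows what is essentially the standard route to such ultrametric maximum principles: regard $f$ as a finite flat morphism $M\to\bbP^1$, identify $X$ with the preimage of the closed unit disc, realise $\mathcal O(X)$ as a free module of rank $\deg f$ over the Tate algebra $\fkF\langle T\rangle$, and push the problem down to the disc via the characteristic polynomial of multiplication by~$g$. The two ingredients you isolate --- the Newton-polygon identity $\max_j|\rho_j|=\max_i|c_i|^{1/(d-i)}$ and the attainment of the Gauss norm at a classical point of the unit circle --- are exactly the right ones, and your handling of the possibly non-reduced fibres (eigenvalue $g(x)$ with multiplicity the local length) is fine.

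One caveat worth flagging: you use that $\fkF$ is algebraically closed (so that the residue field is infinite and the Gauss norm of $c_{i_0}$ is attained at a classical $z_0$ with $|z_0|=1$), whereas the proposition as stated only assumes $\fkF$ complete ultrametric. In the paper's application one has $\fkF=\bC_{p_0}$, so this is harmless; but taken literally the conclusion can fail over a field with finite residue field (e.g.\ $M=\bbP^1_{\bQ_p}$, $f=T$, $g=T^p-T$: then $|g(z)|\le p^{-1}$ for every $z\in\bZ_p$ while the Gauss norm is $1$). So either ``algebraically closed'' should be added to the hypotheses, or the point $x$ in the conclusion should be understood as a rigid/Berkovich point rather than a classical one. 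This is a hypothesis issue in the statement, not a flaw in your argument.
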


If $r$ is small enough, then for every $i\in\{1,\dots,m\}$, $w_{i}\in\{|R_a(z)|\geqslant r^a\}$. We get:
\begin{eqnarray}
\lim_{z\rightarrow w_{a_k}}\left|{(\Theta^*(s)\eta^D(z))}^aR_a(z)^{k-b}\right|&\leqslant&\max_{\{|R_a(z)|\geqslant r^a\}}\left|{(\Theta^*(s)\eta^D(z))}^aR_a(z)^{k-b}\right| \nonumber\\
&\leqslant&\max_{\{|R_a(z)| = r^a\}}\left|{(\Theta^*(s)\eta^D(z))}^aR_a(z)^{k-b}\right| \nonumber\\
&\leqslant&r^{a(k-b)}\|s\|_{\sigma,\infty}^a\max_{\{|R_a|= r^a\}}{\left|\eta(z)\right|}^{Da}. \label{limite produit fonction aux}
\end{eqnarray}

The section $\eta$ is of order at most~$\rho_{\tau}$ at~$\tau$: by definition (see~\eqref{ordre de croissance}), for any local parameter~$u_{\tau}$ at~$\tau$, there exist positive real numbers~$A_1,A_2$ such that, for all~$z$ close enough to~$\tau$,
\[\|\eta(z)\|\leqslant A_1\exp\left({A_2|u_{\tau}(z)|^{-\rho_{\tau}}}\right).\] 
The function $R_a$ has a zero of order $m_{\tau}$ at $\tau\in T$, so there exists a real number~$A_3>0$ such that for all~$z$ close enough to~$\tau,$ $|R_a(z)|\geqslant A_3 |u_{\tau}(z)|^{m_{\tau}}$. Hence,
	\[\|\eta(z)\|\leqslant A_1\exp\left({A_2A_3^{\rho_{\tau}}|R_a(z)|^{-\frac{\rho_{\tau}}{m_{\tau}}}}\right).\]
For $r$ small enough, we thus obtain
\[\max_{|R(z)|=r^a}{\left\|\eta(z)\right\|}\leqslant \max_{\tau\in T}A_1\exp\left(A_4r^{-a\frac{\rho_{\tau}}{\mu_{\tau}}}\right),\]
where $A_4=A_2\max_{\tau\in T}A_3^{\rho_{\tau}}.$

Recall that $m_{\tau}\geqslant\lceil a\mu_{\tau}\rceil\geqslant a\mu_{\tau}$. Therefore, for $r\leqslant 1$ and $\tau\in T$, we have
\[r^{-\frac{\rho^i_{\tau}}{m_{\tau}}}\leqslant r^{-\frac{\rho^i_{\tau}}{a\mu_{\tau}}}\leqslant r^{-\frac{1}{\lambda a}}.\]
It follows that there exists $r_0\in\mathopen]0,1\mathclose[$ such that, for all $r$ less than~$r_0$,
	\[\max_{|R(z)|=r^a}{\left\|\eta(z)\right\|}\leqslant A_1\exp\left(A_4r^{-\frac{1}{\lambda}}\right).\]

With this bound satisfied by the norm of the section~$\eta$, Inequality~\eqref{limite produit fonction aux} becomes
	\begin{align}
	\lim_{z\rightarrow w_{a_k}}\left|{(\Theta^*(s)\eta^D(z))}^aR_a(z)^{k-b}\right|&\leqslant\max_{|R_a(z)|=r^a}|f^aR_a^{k-b}(z)| \nonumber\\
	&\leqslant r^{a(k-b)}A_1^{aD}\exp\left(A_4aDr^{-\frac{1}{\lambda}}\right)\|s\|_{\sigma,\infty}^a \label{produit fonction aux}.
	\end{align}
By Inequality~\eqref{c_k},
	\begin{eqnarray}
	|c_k|^a
	&\leqslant&C_1^{ak}{\left(\max_{\substack{z\in \mathscr D\text{ et } \\ |u_{a_k}(z)|\leqslant B_1}} \left|{f}^aR_a^{k-b} u_{a_k}^{-\nu_a(k)}(z)\right| \right)},\label{maj c_k}
	\end{eqnarray}
where we denote by $\mathscr D$ a neighborhood of~$w_{a_k}$ on which the local parameter~$u_{a_k}$ is holomorphic, and $B_1$ is a non-negative real number. Applying the maximum principle to the holomorphic function $f^aR_a^{k-b} u_{a_k}^{-\nu_a(k)}$ on the domain
	\[\{|u_{a_k}(z)|\leqslant B_1\}\cap\sD,\] we get:
	\begin{eqnarray*}
	\max_{\substack{z\in \mathscr D\text{ et } \\ |u_{a_k}(z)|\leqslant B_1}} \left|{f}^aR_a^{k-b} u_{a_k}^{-\nu_a(k)}(z)\right|&=&\max_{\substack{z\in \mathscr D\text{ et } \\ |u_{a_k}(z)|= B_1}} \left|{f}^aR_a^{k-b} u_{a_k}^{-\nu_a(k)}(z)\right| \\
	&=&B_1^{-\nu_a(k)}\max_{\substack{z\in \mathscr D\text{ et } \\ |u_{a_k}(z)|= B_1}} \left|{f}^aR_a^{k-b} (z)\right|\\
	&\leqslant&B_1^{-\nu_a(k)}{\left(r^{(k-b)}A_1^{D}\exp\left(A_4Dr^{-\frac{1}{\lambda}}\right)\|s\|_{\sigma,\infty}\right)}^a,\end{eqnarray*}
by Inequality~\eqref{produit fonction aux}. Hence we obtain the following upper bound for~$c_k$, thanks to Inequality~\eqref{maj c_k}:

	\[|c_{k}|\leqslant C_1^{k}B_1^{\frac{-\nu_a(k)}{a}}r^{k-b}A_1^{D}e^{A_4Dr^{-\frac{1}{\lambda}}}\|s\|_{\sigma,\infty}.\]

This upper bound $|c_k|$ enables us to obtain an upper bound for the norm of the image of~$s$ by the evaluation morphism~$\varphi_D^{k,l}$, according to Inequality~\eqref{phi et c_k}:
	\[\|\varphi^{k,l}_D(s)\|_{\sigma}\leqslant C_0^{k+D}C_1^{k}B_1^{\frac{-\nu_a(k)}{a}}r^{k-b}A_1^{D}e^{A_4Dr^{-\frac{1}{\lambda}}}\|s\|_{\sigma,\infty}.\]
If $\sigma$ is an embedding of~$K$ in~$\bC$, that is to say, if $p_0$ is the Archimedean place, since the infinity norm of~$s$ is bounded from above by the associated John norm (see \eqref{comparaison norme John}), we get
	\[\|\varphi^{k,l}_D\|_{\sigma}\leqslant C_0^{k+D}C_1^{k}B_1^{\frac{-\nu_a(k)}{a}}r^{k-b}A_1^{D}e^{A_4Dr^{-\frac{1}{\lambda}}}.\]
Since $\nu_a(k)=O(1)$, there exists a real number~$C_2>0$ such that, for all $r\leqslant r_0$,
	\[h_{\sigma}(\varphi^{k,l}_D)=\log\|\varphi^{k,l}_D\|_{\sigma}\leqslant C_2(k+D)+k\log r+A_4Dr^{-\frac{1}{\lambda}}.\] 
Set
	\[r=\min\left\{r_0,\left(\frac{\lambda k}{A_4D}\right)^{-\lambda}\right\}.\]
If $r=\left(\frac{\lambda k}{A_4D}\right)^{-\lambda}$, \ie for $\frac{k}{D}\geqslant \frac{A_4}{\lambda}r_0^{-\frac{1}{\lambda}}$,
\begin{align}
	\log\|\varphi^{k,l}_D\|_{\sigma}&\leqslant C_2(k+D)-\lambda k\log \left(\frac{\lambda k}{A_4D}\right)+\lambda k\nonumber\\
	&\leqslant C_3(k+D)-\lambda k\log \frac k D,\label{hauteur phi place privilegiee}
\end{align}
where $C_3$ is a positive real number.

If $r=r_0$, that is to say if $\frac{k}{D}\leqslant \frac{A_4}{\lambda}r_0^{-\frac{1}{\lambda}}$, then the norm of~$\varphi^{k,l}_{D}$ satisfies the inequality
	\[\log\|\varphi^{k,l}_{D}\|\leqslant C_0(k+D),\]
which holds at every place by definition of the condition of $\alpha$-arithmeticity.
Yet $\log\frac k D\leqslant -\frac 1{\lambda}\log r_0+\log\frac{A_4}{\lambda}$, so for all~$C_4>0$,
	\begin{align*}
	C_4(k+D)-\lambda k\log\frac k D&\geqslant C_4(k+D)+k\left(\log r_0-\lambda\log\frac{A_4}{\lambda}\right)\\
	&\geqslant(C_4+\log r_0-\lambda\log\frac{A_4}{\lambda})k+C_4D.
	\end{align*}
Choose $C_4=\max(C_0,C_0+\lambda\log\frac{A_4}{\lambda}-\log r_0).$ Then, 
	\[h_{\sigma}(\varphi^{k,l}_D)\leqslant C_0(k+D)\leqslant C_4(k+D)-\lambda k\log \frac k D.\]
Thus Inequality~\eqref{hauteur phi place privilegiee} still holds for small values of  $\frac k D$, replacing $C_3$ by~$C_4$.
Finally, there exists a positive real number~$C$~such that for all non-negative integers~$k$, $D$ and every integer $l\in\{1,\dots,d_{a_k}\}$,
\[h_{\sigma}(\varphi^{k,l}_D)\leqslant C(k+D)-\lambda k\log \frac{k}{D},\]
and this concludes the proof of Lemma~\ref{bonne maj morphisme d'evaluation}.
\end{proof}

\begin{proof}[Proof of Proposition~\ref{maj hauteur}]
By Proposition~\ref{hauteur somme plongements}, the height of the evaluation morphism is given by the following sum:

\begin{align*}
h(\varphi^{k,l}_{D})&=\sum_{p\leqslant\infty}\sum_{\sigma: K\hookrightarrow\bC_p}h_{\sigma}(\varphi^{k,l}_{D})\\
&=\left(\sum_{p\neq p_0}\sum_{\sigma: K\hookrightarrow\bC_p}h_{\sigma}(\varphi^{k,l}_{D})\right)\\
&+\left(\sum_{\substack{\sigma: K\hookrightarrow\bC_{p_0} \\ \sigma\circ\sigma_{a_k}^l(\xi_{a_k})=\xi{a_k}}}h_{\sigma}(\varphi^{k,l}_{D})\right)
+\left(\sum_{\substack{\sigma: K\hookrightarrow\bC_{p_0} \\ \sigma\circ\sigma_{a_k}^l(\xi_{a_k})\neq\xi_{a_k}}}h_{\sigma}(\varphi^{k,l}_{D})\right)
.\end{align*}
Let~$\sigma$ be an embedding of~$K$ in~$\bC_{p_0}.$ If
	\begin{equation}\label{condition bon plongement}
	\sigma(\sigma_{a_k}^l(\xi_{a_k}))=\xi_{a_k},\end{equation}
we can apply Inequality~\eqref{bonne maj phi^k_D} to the height~$h_{\sigma}(\varphi^{k,l}_D)$ according to Lemma~\ref{bonne maj morphisme d'evaluation}.
The composed map $\sigma\circ\sigma^l_{a_k}$ is an embedding of $K_{a_k}$ in~$K$ and is uniquely determined by its image of~$\xi_{a_k}$. For fixed~$k$ and~$l$, the number of embeddings~$\sigma$ of~$K$ in~$\bC_{p_0}$ satisfying Condition~\eqref{condition bon plongement} is equal to the number of different ways of extending an embedding of $\sigma^l_{a_k}(K_{a_k})$ in~$\bC_{p_0}$ to an embedding of~$K$ in~$\bC_{p_0}$, that is
 	\[[K:\sigma^l_{a_k}(K_{a_k})]=[K:K_{a_k}]=\frac{[K:\bQ]}{d_{a_k}}.\]
Hence, denoting by $d$ the degree of~$K$ over~$\bQ$,
\begin{align*}
h(\varphi^{k,l}_{D})&\leqslant C'(k+D)+\ol{\alpha_{a_k}} d n_k\log n_k + \frac{d}{d_{a_k}}(C(k+D)-\lambda k\log \frac{k}{D})\\
&\leqslant C_6(k+D)+\ol{\alpha_{a_k}} d\beta_{a_k}k\log k-\frac{d}{d_{a_k}}\lambda k\log k+\frac{d}{d_{a_k}}\lambda k\log D,
\end{align*}
where $C_6=C'+\frac{dC}{\min_i{d_i}}$, because $n_k\leqslant k$. Hence,
	\[h(\varphi^{k,l}_{D})\leqslant C_6(k+D)-\left(\frac{d\lambda}{d_{a_k}}-d\ol{\alpha_{a_k}} \beta_{a_k}\right)k\log k+\frac{d\lambda}{d_{a_k}} k\log D.\qedhere\]
\end{proof}

\subsection{Proof of the main theorem}\label{dém SL degres}

To prove Theorem~\ref{TH SL DEGRES}, we can assume $n>1$. We will use the slopes inequality with the upper bounds for the height of the evaluation morphisms we proved in Paragraph~\ref{hauteur des morphismes}. We recall the slopes inequality~\eqref{IP avec omega1}

	\[-CD^{n+1}\leqslant\sum_{k=0}^{\infty}\sum_{l=1}^{d_{a_k}}\rk(E_{D}^{k,l-1}/E_{D}^{k,l})\left(C_1(k+D)+h(\varphi_{D}^{k,l})\right).\]
	
	From the upper bound for the height of the evaluation morphism given by Proposition~\ref{maj hauteur}, one has
	\begin{align}
	-C D^{n+1}&\leqslant\sum_{k=0}^{\infty}\sum_{l=1}^{d_{a_k}}\rk(E_{D}^{k,l-1}/E_{D}^{k,l})\Big[C_6(k+D)-\left(\frac{d\lambda}{d_{a_k}}-d\ol{\alpha_{a_k}} \beta_{a_k}\right)k\log k\nonumber \\
	&\qquad\qquad\qquad\qquad\qquad\qquad +\frac{d\lambda}{d_{a_k}} k\log D\Big]\nonumber\\
	&\leqslant \sum_{k=0}^{\infty}\rk(E_{D}^{k}/E_{D}^{k+1})\Big[C_6(k+D)-\left(\frac{d\lambda}{d_{a_k}}-d\ol{\alpha_{a_k}} \beta_{a_k}\right)k\log k\nonumber\\
	&\qquad\qquad\qquad\qquad\qquad\qquad +\frac{d\lambda}{d_{a_k}} k\log D\Big],	
	\label{IP}
	\end{align}
where $C_6$ is a positive real number.

First we prove some inequalities satisfied by the terms of the sequence~$\rk(E^k_D)$. The Hilbert-Samuel theorem provides an estimation of the rank of~$E_D$, since the line bundle~$L$ is ample (see for instance~\citep{bost_algebraic-leaves}~(4.19)) :

\begin{lemme}\label{HS geom}
When $D$ goes to infinity,
	\begin{equation}\label{rang de E}\rk(E_D)\sim \frac{1}{n!}\deg_L(X)D^{n}.\end{equation}
\end{lemme}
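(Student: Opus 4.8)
The plan is to derive this from the classical asymptotic Riemann--Roch theorem (Snapper's polynomial) together with Serre's vanishing theorem; this is precisely the Hilbert--Samuel asymptotics for an ample line bundle, and is the content of~\citep{bost_algebraic-leaves}~(4.19).

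First I would recall that, $L$ being a line bundle on the projective variety $X$ of dimension~$n$ over~$\bQ$, the Euler--Poincaré characteristic
\[\chi(X,L^{\otimes D})=\sum_{i\geqslant 0}(-1)^i\dim H^i(X,L^{\otimes D})\]
is, by Snapper's theorem, given for all $D\in\bZ$ by a numerical polynomial in~$D$ of degree~$n$ whose leading coefficient equals $(c_1(L)^n\cap[X])/n!=\deg_L(X)/n!$. (Dimensions of cohomology are unchanged by extending scalars to an algebraic closure, so one may freely compute geometrically; and if the Zariski closure of~$\widehat V$ with which we replaced~$X$ fails to be integral, one simply reads $[X]$ as its fundamental cycle, leaving the statement unaffected.)

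Next, since $L$ is ample, Serre's vanishing theorem supplies an integer~$D_0$ such that $H^i(X,L^{\otimes D})=0$ for all $i>0$ and all $D\geqslant D_0$. Hence for such~$D$,
\[\rk(E_D)=\dim \Gamma(X,L^{\otimes D})=h^0(X,L^{\otimes D})=\chi(X,L^{\otimes D}),\]
which by the previous step is a polynomial in~$D$ of degree~$n$ with leading coefficient $\deg_L(X)/n!$; note $\rk(E_D)$ is the same whether computed over~$\bQ$ or over~$K$, since $E_D=E_{\bQ,D}\otimes_{\bQ}K$. Letting $D\to\infty$ gives $\rk(E_D)\sim\frac{1}{n!}\deg_L(X)D^n$, as claimed.

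There is essentially no obstacle here: everything is standard, and the only point requiring a word of care --- already addressed above --- is the bookkeeping of $\deg_L(X)$ when $X$ is the possibly non-integral Zariski closure of~$\widehat V$. If one preferred an elementary argument avoiding cohomology altogether, one could instead replace $L$ by a sufficiently high power, embed $X$ in a projective space, and bound $h^0(X,L^{\otimes D})$ above and below by counting monomials of bounded degree, recovering the same constant $\deg_L(X)/n!$; but invoking Serre vanishing and asymptotic Riemann--Roch is the cleanest route.
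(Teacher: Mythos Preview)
Your argument is correct and is exactly the standard proof of the Hilbert--Samuel asymptotics that the paper invokes by citing~\citep{bost_algebraic-leaves}~(4.19); the paper does not give any further details, so your expansion via Snapper's polynomial and Serre vanishing is precisely what lies behind that citation.
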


\begin{lemme}For all~$k\in\bZ_{\geqslant 0}$ and all $D\in\bZ_{>0}$,
	\begin{equation}\label{difference rangs}\rk(E_D^k/E_D^{k+1})\leqslant d_{a_k}.
	\end{equation}
\end{lemme}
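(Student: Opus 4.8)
The plan is to read off the bound directly from the refined filtration
\[
E_D^k = E_D^{k,0} \supseteq E_D^{k,1} \supseteq \cdots \supseteq E_D^{k,d_{a_k}} = E_D^{k+1}
\]
introduced just above the statement, using that each of its successive quotients embeds into a one-dimensional $K$-vector space. First I would recall that, by the very construction of this refinement, the quotient $E_D^{k,l-1}/E_D^{k,l}$ is (via taking the quotient by the kernel) the domain of the injective evaluation morphism
\[
\varphi^{k,l}_D : E^{k,l-1}_D/E^{k,l}_D \hookrightarrow \Sym^{n_k}\Omega^1_{\widehat V_{\xi_{a_k}^l}}\otimes L^D_{|\xi_{a_k}^l},
\]
for $l \in \{1,\dots,d_{a_k}\}$. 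Since $\widehat V_{\xi_{a_k}^l}$ has dimension $1$, the sheaf $\Omega^1_{\widehat V_{\xi_{a_k}^l}}$ is free of rank $1$, hence $\Sym^{n_k}\Omega^1_{\widehat V_{\xi_{a_k}^l}}\otimes L^D_{|\xi_{a_k}^l}$ is a $K$-vector space of dimension $1$; therefore $\rk_K\!\big(E^{k,l-1}_D/E^{k,l}_D\big)\leqslant 1$ for every $l$.

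Then I would conclude by additivity of the rank along this finite filtration: writing $\rk(E_D^k/E_D^{k+1}) = \sum_{l=1}^{d_{a_k}} \rk\!\big(E_D^{k,l-1}/E_D^{k,l}\big)$ and bounding each summand by $1$ gives $\rk(E_D^k/E_D^{k+1}) \leqslant d_{a_k}$, which is the desired inequality.

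There is no genuine obstacle here; the only points requiring (immediate) care are that the refinement really has exactly $d_{a_k}$ steps — one for each of the $d_{a_k}$ conjugates $\widehat V_{\xi_{a_k}^1},\dots,\widehat V_{\xi_{a_k}^{d_{a_k}}}$ of $\widehat V_{a_k}$ — and that the maps $\varphi^{k,l}_D$ are injective on the quotients, both of which are part of the definitions recalled above.
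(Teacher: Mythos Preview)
Your argument is correct and is essentially the same as the paper's: the paper bundles the $d_{a_k}$ steps into a single injective map
\[
\varphi^{k}_D:E^{k}_D / E^{k+1}_D \hookrightarrow\bigoplus_{l=1}^{d_{a_k}}\Sym^{n_k}\Omega^1_{\widehat V_{\xi_{a_k}^l}}\otimes L^D_{|\xi_{a_k}^l},
\]
whose target has $K$-dimension $d_{a_k}$, whereas you unfold this along the refined filtration $E^{k,0}_D\supseteq\cdots\supseteq E^{k,d_{a_k}}_D$ and bound each successive quotient by~$1$. Both rest on the same facts (injectivity of the evaluation maps and one-dimensionality of each target), so there is no substantive difference.
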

\begin{proof}
This inequality comes from the fact that the map
\[\varphi^{k}_D:E^{k}_D / E^{k+1}_D \to\bigoplus_{l=1}^{d_{a_k}}\Sym^{n_k}\Omega^1_{\widehat V_{\xi_{a_k}^l}}\otimes L^D_{|\xi_{a_k}^l}\]
is injective, and that each $K$-vector space $\Sym^{n_k}\Omega^1_{\widehat V_{\xi_{a_k}^l}}\otimes L^D_{|\xi_{a_k}^l}$ has dimension~$1$.
\end{proof}

\begin{lemme} For every non-negative integer~$N$,
	\begin{equation}\label{asymptotique rang} \rk E^0_D-\rk E_D^N=\sum_{0\leqslant k<N}\rk(E_D^{k}-\rk E_D^{k+1})\leqslant dN,
	\end{equation} that is to say
	\begin{equation}\label{minoration rang} \sum_{k\geqslant N}(\rk E_D^k-\rk E_D^{k+1})=(\rk E_D)^N\geqslant \rk(E^0_D)-dN.
	\end{equation}
\end{lemme}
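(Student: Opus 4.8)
The plan is to obtain both displayed formulas as immediate consequences of Inequality~\eqref{difference rangs}, namely $\rk(E_D^k/E_D^{k+1})\leqslant d_{a_k}$, together with elementary telescoping. First I would note that, since $K$ is a (Galois) extension of~$\bQ$ containing~$K_{a_k}$, the degree $d_{a_k}=[K_{a_k}:\bQ]$ divides $d=[K:\bQ]$, so in particular $d_{a_k}\leqslant d$ for every~$k$. Writing $\rk E_D^0-\rk E_D^N$ as the telescoping sum $\sum_{0\leqslant k<N}(\rk E_D^k-\rk E_D^{k+1})$ and bounding each term by $d_{a_k}\leqslant d$ then yields $\rk E_D^0-\rk E_D^N\leqslant dN$, which is~\eqref{asymptotique rang}.

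For~\eqref{minoration rang} I would use that, after the reduction made at the beginning of this section, $\widehat V$ is Zariski dense in~$X$; by the lemma characterizing the injectivity of~$\eta_{K,D}$, the map $\eta_{K,D}$ is then injective, so the decreasing filtration $(E_D^k)_{k\geqslant 0}$ of the finite-dimensional $K$-vector space $E_D$ is separated, $\bigcap_{k\geqslant 0} E_D^k=0$, and hence eventually zero: $E_D^k=0$ for all $k$ large enough. Consequently $\sum_{k\geqslant N}(\rk E_D^k-\rk E_D^{k+1})$ has only finitely many nonzero terms and telescopes to $\rk E_D^N$. Combining this with~\eqref{asymptotique rang} gives $\rk E_D^N=\sum_{k\geqslant N}(\rk E_D^k-\rk E_D^{k+1})\geqslant\rk E_D^0-dN$, as desired.

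There is no real obstacle here; the argument is purely formal. The only mild point of care is the justification that the filtration becomes zero for large~$k$, but this is automatic from separatedness of the filtration together with $\dim_K E_D<\infty$, both of which are already available.
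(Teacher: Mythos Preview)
Your proof is correct and follows essentially the same approach as the paper: telescoping combined with the bound~\eqref{difference rangs}. The paper bounds $\sum_{0\leqslant k<N}d_{a_k}$ by $\max_{1\leqslant j\leqslant m}d_j\cdot N$ (which of course implies the stated bound $dN$), and treats~\eqref{minoration rang} as an immediate rearrangement, whereas you additionally spell out why the tail sum telescopes to $\rk E_D^N$ via separatedness and finite-dimensionality; this extra justification is harmless and accurate.
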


\begin{proof}For every non-negative integer~$N$,
	\begin{align*}
	\sum_{0\leqslant k<N}(\rk E_D^k-\rk E_D^{k+1})&\leqslant \sum_{0\leqslant k<N}d_{a_k}\text{ from~\eqref{difference rangs}}\\
	&\leqslant \max_{1\leqslant j\leqslant m}{d_j}N,\qedhere
	\end{align*}
\end{proof}

For every non-negative integer~$k$, set $A_k=\frac{d\lambda}{d_{a_k}}$ and $B_k=\frac{d\lambda}{d_{a_k}}-d(\ol{\alpha_{a_k}}\beta_{a_k})$. The slopes inequaliy~\eqref{IP} is then:
	\begin{equation}\label{IP A B}
\sum_{k=0}^{\infty}\rk(E_{D}^{k}/E_{D}^{k+1})\Big[-C_6(k+D)+B_kk\log k-A_k k\log D\Big]\leqslant CD^{n+1}.	
	\end{equation}

We set 
	\begin{equation}\label{A} A=\lambda ,\end{equation} and 
	\begin{equation}\label{B}
	B=\lambda-\max_{k}(\ol{\alpha_{a_k}}\beta_{a_k}d_{a_k})=\lambda-\max_{1\leqslant j\leqslant m}(\ol{\alpha_{j}}\beta_{j}d_j).
	\end{equation}
We will now prove that Inequality~\eqref{IP A B} implies, when $D$ is big enough, 	
	\[(n-1)A \leqslant n(A-B).\]
If $B\leqslant 0$, the conclusion holds (because $A>0$). In all this proof, we assume $0<B$. Then, for every non-negative integer~$k$, $B_k>0$. 	
Let $\beta>0$. We rewrite Inequality~\eqref{IP A B} cutting the sum in two parts, the terms of index $k\leqslant D^{\beta}$ on one side and the terms of index $k>D^{\beta}$ on the other side. Setting
	\[S_D(\beta)=\sum_{k\leqslant D^{\beta}}\rk(E^k_D/E^{k+1}_D)\Big(-C_{6}(k+D)-A_kk\log D+B_kk\log k\Big),\]
and
	\[S'_D(\beta)=\sum_{k>D^{\beta}}\rk(E^k_D/E^{k+1}_D)\Big(-C_{6}(k+D)-A_kk\log D+B_kk\log k\Big),\]
Inequality~\eqref{IP A B} becomes:
	\begin{equation}\label{pentes}S_{D}(\beta)+S'_D(\beta)\leqslant CD^{n+1}.\end{equation}

\begin{lemme}\label{S}
Assume $\beta\geqslant 1$. Then, when $D$ goes to infinity:
	\[|S_D(\beta)|=O(D^{2\beta}\log D).\]
\end{lemme}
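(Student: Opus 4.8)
The plan is to prove this by brute-force term-by-term estimation, using only the uniform boundedness of all the quantities appearing in the summand and the trivial bound on the number of summands. There is really no obstacle here; the content of the lemma is just bookkeeping, and the hypothesis $\beta\geqslant 1$ is used precisely to absorb the ``$+D$'' and ``$\log D$'' contributions into powers of $D^{\beta}$.

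First I would record the uniform bounds. By \eqref{difference rangs} we have $\rk(E_D^k/E_D^{k+1})\leqslant d_{a_k}\leqslant \max_{1\leqslant j\leqslant m}d_j$, a constant independent of $k$ and $D$. Since $a_k\in\{1,\dots,m\}$ takes only finitely many values and the $\overline{\alpha_j}$, $\beta_j$, $d_j$ and $\lambda$ are fixed, the coefficients $A_k=\frac{d\lambda}{d_{a_k}}$ and $B_k=\frac{d\lambda}{d_{a_k}}-d\,\overline{\alpha_{a_k}}\beta_{a_k}$ are bounded in absolute value by a constant $C_0$ depending only on the data of the theorem. Next, for an index $k$ with $0\leqslant k\leqslant D^{\beta}$ and $D$ large, using $\beta\geqslant 1$ (so $D\leqslant D^{\beta}$) I would bound $k+D\leqslant 2D^{\beta}$, $k\log D\leqslant D^{\beta}\log D$, and $k\log k\leqslant D^{\beta}\log(D^{\beta})=\beta D^{\beta}\log D$ (with the convention $k\log k=0$ for $k\leqslant 1$, which only affects finitely many terms). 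Hence each summand of $S_D(\beta)$ has absolute value at most $C_1 D^{\beta}\log D$ for a suitable constant $C_1$ and all $D$ large enough.

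Finally, the number of indices $k$ with $0\leqslant k\leqslant D^{\beta}$ is at most $D^{\beta}+1$, so
\[
|S_D(\beta)|\leqslant (D^{\beta}+1)\,C_1 D^{\beta}\log D = O\!\left(D^{2\beta}\log D\right)
\]
as $D\to\infty$, which is the claimed estimate. The only point to be slightly careful about is the degenerate behaviour of $k\log k$ at $k=0,1$ and that the implied constants depend only on $m$, the $d_j$, the $\overline{\alpha_j}$, the $\beta_j$ and $\lambda$ but not on $D$; both are immediate.
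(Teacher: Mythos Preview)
Your proof is correct and follows essentially the same approach as the paper: bound each summand by $O(D^{\beta}\log D)$ using $\beta\geqslant 1$ and the boundedness of $A_k,B_k$, then multiply by the $O(D^{\beta})$ count of terms. The only cosmetic difference is that the paper bounds $\sum_{k\leqslant D^{\beta}}\rk(E_D^k/E_D^{k+1})$ via the telescoping estimate~\eqref{asymptotique rang} rather than term-by-term via~\eqref{difference rangs}, but these amount to the same thing.
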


\begin{proof}
	\begin{align*}
	|S_D(\beta)|&\leqslant \sum_{k\leqslant D^{\beta}}\rk(E^k_D/E^{k+1}_D)\Big(C_6D+C_6k+A_kk\log D+B_kk\log k\Big)\\
	&\leqslant \sum_{k\leqslant D^{\beta}}\rk(E^k_D/E^{k+1}_D)\Big(C_{6}D+D^{\beta}(C_{6}+A_k\log D)+B_k\beta D^{\beta}\log D\Big)\\
	&\leqslant C_{7}D^{\beta}\log D\sum_{k\leqslant D^{\beta}}\rk(E_D^k/E_D^{k+1}),		\end{align*}
since $\beta\geqslant 1$ and $(A_k)$, $(B_k)$ are bounded. Thus, according to~\eqref{asymptotique rang}, there exists~$C_{8}>0$ such that
	\begin{align*}
	|S_D(\beta)|&\leqslant C_{8}D^{2\beta}\log D.\qedhere
	\end{align*}
\end{proof}
\begin{lemme}\label{S'}
Assume $\beta\in]\frac{A}{B},n[$. Then there exists a positive real number~$C_{11}$ such that, for $D$ big enough, \[S'_D(\beta)\geqslant C_{11}D^{n+\beta}\log D.\]
\end{lemme}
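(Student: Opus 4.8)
The plan is to exploit the fact that on the tail $k>D^{\beta}$ every term of $S'_D(\beta)$ becomes positive and of size at least a constant times $k\log D$, while the ranks $\rk(E_D^{k}/E_D^{k+1})$ over this tail still add up to a fixed positive fraction of $\rk(E_D)\sim\frac1{n!}\deg_L(X)D^{n}$.

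First I would record a uniform gap between the coefficients $A_k=\frac{d\lambda}{d_{a_k}}$ and $B_k=\frac{d\lambda}{d_{a_k}}-d\,\ol{\alpha_{a_k}}\beta_{a_k}$. Since $B>0$ is in force here and
\[\frac{B_k}{A_k}=1-\frac{\ol{\alpha_{a_k}}\beta_{a_k}d_{a_k}}{\lambda}\geqslant 1-\frac{\max_{1\leqslant j\leqslant m}\ol{\alpha_j}\beta_j d_j}{\lambda}=\frac{B}{A},\]
each $B_k$ is positive with $B_k/A_k\geqslant B/A$; as $a_k$ ranges over $\{1,\dots,m\}$ the pairs $(A_k,B_k)$ take finitely many values and $\min_k A_k>0$, so $\beta>A/B$ yields $\varepsilon>0$ with $\beta B_k-A_k\geqslant\varepsilon$ for all $k$. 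I would also observe that $A/B=\lambda/(\lambda-\max_j\ol{\alpha_j}\beta_j d_j)>1$, because $\ol{\alpha_j}>\alpha_j\geqslant 0$ forces $\max_j\ol{\alpha_j}\beta_j d_j>0$; hence the hypothesis $\beta\in\,]\tfrac{A}{B},n[\,$ gives in particular $\beta>1$.

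Next, for $k>D^{\beta}$ and $D\geqslant 2$ one has $\log k>\beta\log D$, so $B_k k\log k-A_k k\log D\geqslant(\beta B_k-A_k)k\log D\geqslant\varepsilon k\log D$, while $\beta>1$ gives $D<D^{\beta}<k$, hence $C_6(k+D)\leqslant 2C_6 k$. Therefore the general term of $S'_D(\beta)$ is at least $\rk(E_D^{k}/E_D^{k+1})(\varepsilon k\log D-2C_6 k)$, which for $D$ large enough that $\varepsilon\log D\geqslant 4C_6$ is $\geqslant\tfrac12\varepsilon\log D\cdot k\,\rk(E_D^{k}/E_D^{k+1})\geqslant 0$. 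Summing and bounding each $k>D^{\beta}$ below by $D^{\beta}$,
\[S'_D(\beta)\geqslant\tfrac12\varepsilon D^{\beta}\log D\sum_{k>D^{\beta}}\rk(E_D^{k}/E_D^{k+1}).\]
Writing $N$ for the least integer exceeding $D^{\beta}$ and telescoping (using that the decreasing filtration $(E_D^{k})_k$ of the finite–dimensional space $E_D$ has zero intersection by injectivity of $\eta_{K,D}$, hence is eventually $0$), the remaining sum equals $\rk(E_D^{N})$, and by inequality~\eqref{asymptotique rang} it is $\geqslant\rk(E_D)-dN\geqslant\rk(E_D)-d(D^{\beta}+1)$. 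As $\beta<n$, Lemma~\ref{HS geom} makes the subtracted term negligible, so $\rk(E_D^{N})\geqslant c_0 D^{n}$ for $D$ large (e.g.\ $c_0=\frac{1}{2\cdot n!}\deg_L(X)$), and substituting gives $S'_D(\beta)\geqslant C_{11}D^{n+\beta}\log D$ with $C_{11}=\tfrac12\varepsilon c_0$.

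I do not foresee a genuine obstacle: the only points that need care are the two sign conditions $\beta>1$ (so the linear error $C_6(k+D)$ is absorbed by $\varepsilon k\log D$) and $\beta<n$ (so Hilbert–Samuel beats $dD^{\beta}$), both of which are built into $\beta\in\,]\tfrac{A}{B},n[\,$ once $A/B>1$ has been checked; everything else is routine bookkeeping with the uniform gap $\varepsilon$.
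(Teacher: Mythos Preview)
Your proof is correct and follows essentially the same approach as the paper's: bound each tail term below by a constant times $k\log D$ using the uniform gap $\beta B_k-A_k\geqslant\varepsilon>0$, replace $k$ by $D^{\beta}$, then control the remaining rank sum via \eqref{minoration rang} and Hilbert--Samuel. The only cosmetic difference is that you make the inequality $A/B>1$ (hence $\beta>1$) explicit in order to absorb the $C_6(k+D)$ term via $D<k$, whereas the paper handles that term by writing $-C_6(k+D)+C_9k\log D=-C_6D+k(-C_6+C_9\log D)$ and invoking $\beta\geqslant1$ only implicitly; the arguments are otherwise identical.
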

\begin{proof}
We remark that since $\frac{A_k}{B_k}=\frac{\frac{d\lambda}{d_{a_k}}}{\frac{d\lambda}{d_{a_k}}-d(\ol{\alpha_{a_k}}\beta_{a_k})}=\frac{d\lambda}{d\lambda-d(\ol{\alpha_{a_k}}\beta_{a_k}d_{a_k})}\leqslant \frac A B$, then for all non-negative integer~$k$ we have $\beta>\frac{A_k}{B_k}$.

Now, we give a lower bound for $S'_D(\beta)$. If $k\geqslant D^{\beta},$ then
  \begin{align*}
   B_k\log k-A_k\log D&\geqslant(B_k\beta-A_k)\log D\geqslant C_{9}\log D,
  \end{align*}
where $C_{9}= \min_k(B_k\beta-A_k)$ which is positive by the remark.

\[S'_D(\beta)\geqslant\sum_{k>D^{\beta}}\rk(E^k_D/E^{k+1}_D)\Big(-C_{6}(k+D)+C_{9}k\log D)\Big).\]
For $D$ big enough, $-C_{6}+C_{9}\log D\geqslant 0$, and hence
	\begin{align*}
	S'_D(\beta)&\geqslant (-C_{6}D+D^{\beta}(-C_{6}+C_{9}\log D))\sum_{k>D^{\beta}}\rk(E^k_D/E^{k+1}_D)\\
	&\geqslant C_{10}D^{\beta}\log D\sum_{k>D^{\beta}}\rk(E^k_D/E^{k+1}_D),
	\end{align*}
for $D$ big enough.

Yet, $\sum_{k>D^{\beta}}\rk(E^k_D/E^{k+1}_D)\geqslant \rk E^0_D-d([D^{\beta}]+1)$ from~(\ref{minoration rang}). Since $\rk E^0_D\sim CD^r$ thanks to the geometric Hilbert-Samuel theorem (Lemma~\ref{HS geom}) and since $\beta <r$, there exists a positive real number~$C_{10}$ such that, for all big enough $D$,

\[\sum_{k>D^{\beta}}\rk(E^k_D/E^{k+1}_D)\geqslant C_{10}D^n.\]
Setting $C_{11}=C_{9}C_{10}$, we get the lower bound for $S'_D(\beta)$ we wanted, that is: $S'_D(\beta)\geqslant C_{11}D^{n+\beta}\log D.$
\end{proof}

To conclude that $A\leqslant n B$, by contradiction we pick $\beta\in]\frac{A}{B},n[$. Then, by Lemmas~\ref{S} and~\ref{S'},
\[S_D(\beta)+S'_D(\beta)\geqslant -C_{8}D^{2\beta}\log D+C_{11}D^{n+\beta}\log D.\]
Since $2\beta< n+\beta,$ there exists~$C_{12}>0$ such that, for all~$D$ big enough,
\[S_D(\beta)+S'_D(\beta)\geqslant C_{12}D^{n+\beta}\log D.\]
This inequality contradicts Inequality~\eqref{pentes}
\[
S_{D}(\beta)+S'_D(\beta)\leqslant CD^{n+1},\]
because $\beta>\frac{A}{B}\geqslant 1$. Therefore, 
\[\frac{A}{B}\geqslant n.\]
Hence we have
	\[(n-1)A\leqslant n(A-B),\]
that is to say, by definition of~$A$ and~$B$,
	\begin{equation}\label{maj lambda}\lambda\leqslant\frac{n}{n-1}\max_{1\leqslant j\leqslant m}(\ol{\alpha_j}d_j\beta_j).\end{equation}
We recall that the parameter $\lambda$ is a real number which satisfies $\lambda>0$ and $\lambda\rho<1$. If $\rho=0$, then we can chose $\lambda$ as big as we want, and the previous inequality provides a contradiction. Hence $\rho\neq 0$, and letting~$\lambda$ go to~$\frac 1{\rho}$ from below in Inequality~\eqref{maj lambda}
, one gets:
	\begin{equation}\label{maj d_j beta_j}
	1\leqslant \frac{r}{r-1}\rho\max_{1\leqslant j\leqslant m}(\ol{\alpha_j}d_j\beta_j).
	\end{equation}

It remains to make an optimal choice for the rational parameters 
 $\beta_j$. We assume that $\ol{\alpha_j}$ is rational and bigger than $\alpha_j$, for every $j\in\{1,\dots,m\}$. We want to minimize the $\max_{1\leqslant j\leqslant m}\ol{\alpha_j} d_j\beta_j$, for $\beta_j>0$ and $\sum\beta_j=1$. This minimum is at least equal to ${\left(\sum_i\frac{1}{\ol{\alpha_i} d_i}\right)}^{-1}$ since
	\[1=\sum\beta_i=\sum\beta_i \ol{\alpha_i}d_i\frac{1}{\ol{\alpha_i}d_i}\leqslant \max_{j}\ol{\alpha_j}\beta_jd_{j}\sum_i\frac{1}{\ol{\alpha_i}d_i}.\]
Setting
	\begin{equation}\label{choix beta_j}
	\beta_j=\frac{1}{\ol{\alpha_j}d_j}{\left(\sum_i\frac{1}{\ol{\alpha_j}d_i}\right)}^{-1},
	\end{equation}
we have, for every $j\in\{1,\dots,m\}$, $\ol{\alpha_j}d_j\beta_j={\left(\sum_i\frac{1}{\ol{\alpha_i}d_i}\right)}^{-1}.$ With this choice, Inequality~\eqref{maj d_j beta_j} hence becomes $\sum_{i=1}^m\frac{1}{\ol{\alpha_i}d_i}\leqslant \frac{n}{n-1}\rho$.
Letting $\ol{\alpha_j}$ go to~$\alpha_j$ from above with $\ol{\alpha_j}$ rational in the previous inequality, we get
\begin{equation}\sum_{i=1}^m\frac{1}{\alpha_id_i}\leqslant \frac{n}{n-1}\rho,\end{equation}
and this concludes the proof of the main Theorem~\ref{TH SL DEGRES}.	

\begin{remarque}
To prove Theorem~\ref{TH SL DEGRES}, we had to differentiate with different speeds at the different points: at each point, we differentiated with a speed inversely proportional to its degree. If we had differentiated with the same speed at each point, that is to say if we had taken all the~$\beta_j$ equal to~$\frac{1}{m}$, we would have obtained, from~\eqref{maj d_j beta_j}, the following weaker inequality:
\[m\leqslant\frac{r}{r-1}\rho\max_{1\leqslant j\leqslant m}\alpha_jd_j.\]


\end{remarque}


\newpage
\bibliographystyle{apalike}
\bibliography{bibliographie_en.bib}

\def\cprime{$'$}
\begin{thebibliography}{}

\bibitem[Ahlfors and Sario, 1960]{ahlfors_riemannsurfaces}
Ahlfors, L.~V. and Sario, L. (1960).
\newblock {\em Riemann surfaces}.
\newblock Princeton Mathematical Series, No. 26. Princeton University Press,
  Princeton, N.J.

\bibitem[Bertrand, 1975]{bertrand_SLsurdomainesnonsimplementconnexes}
Bertrand, D. (1975).
\newblock Un th\'eor\`eme de {S}chneider-{L}ang sur certains domaines non
  simplement connexes.
\newblock In {\em S\'eminaire {D}elange-{P}isot-{P}oitou (16e ann\'ee:
  1974/75), {T}h\'eorie des nombres, {F}asc. 2, {E}xp. {N}o. {G}18}, page~13.
  Secr\'etariat Math\'ematique, Paris.

\bibitem[Bertrand, 1977]{bertrand_SLdegres}
Bertrand, D. (1977).
\newblock A transcendence criterion for meromorphic functions.
\newblock In {\em Transcendence theory: advances and applications ({P}roc.
  {C}onf., {U}niv. {C}ambridge, {C}ambridge, 1976)}, pages 187--193. Academic
  Press, London.

\bibitem[Bosch et~al., 1984]{bosch-guntzer-remmert}
Bosch, S., G{\"u}ntzer, U., and Remmert, R. (1984).
\newblock {\em Non-{A}rchimedean analysis. A systematic approach to rigid
  analytic geometry}, volume 261 of {\em Grundlehren der Mathematischen
  Wissenschaften [Fundamental Principles of Mathematical Sciences]}.
\newblock Springer-Verlag, Berlin.

\bibitem[Bost, 1996]{bost_bourbaki96}
Bost, J.-B. (1996).
\newblock P\'eriodes et isog\'enies des vari\'et\'es ab\'eliennes sur les corps
  de nombres (d'apr\`es {D}. {M}asser et {G}. {W}\"ustholz).
\newblock {\em Ast\'erisque}, (237):Exp.\ No.\ 795, 4, 115--161.
\newblock S{\'e}minaire Bourbaki, Vol. 1994/95.

\bibitem[Bost, 2001]{bost_algebraic-leaves}
Bost, J.-B. (2001).
\newblock Algebraic leaves of algebraic foliations over number fields.
\newblock {\em Publ. Math. Inst. Hautes \'Etudes Sci.}, (93):161--221.

\bibitem[Bost, 2006]{bost_slopes}
Bost, J.-B. (2006).
\newblock Evaluation maps, slopes, and algebraicity criteria.
\newblock In {\em International {C}ongress of {M}athematicians. {V}ol. {II}},
  pages 537--562. Eur. Math. Soc., Z\"urich.

\bibitem[Bost and Chambert-Loir, 2009]{bost_chambert-loir_analyticcurves}
Bost, J.-B. and Chambert-Loir, A. (2009).
\newblock Analytic curves in algebraic varieties over number fields.
\newblock In {\em Algebra, arithmetic, and geometry: in honor of {Y}u. {I}.
  {M}anin. {V}ol. {I}}, volume 269 of {\em Progr. Math.}, pages 69--124.
  Birkh\"auser Boston Inc., Boston, MA.

\bibitem[Camacho and Lins~Neto, 1985]{camacho_neto_foliations}
Camacho, C. and Lins~Neto, A. (1985).
\newblock {\em Geometric theory of foliations}.
\newblock Birkh\"auser Boston Inc., Boston, MA.
\newblock Translated from the Portuguese by Sue E. Goodman.

\bibitem[Chambert-Loir, 2002]{chambert-loir_algebricite}
Chambert-Loir, A. (2002).
\newblock Th\'eor\`emes d'alg\'ebricit\'e en g\'eom\'etrie diophantienne
  (d'apr\`es {J}.-{B}.\ {B}ost, {Y}.\ {A}ndr\'e, {D}. \& {G}.\ {C}hudnovsky).
\newblock {\em Ast\'erisque}, (282):Exp. No. 886, viii, 175--209.
\newblock S{\'e}minaire Bourbaki, Vol. 2000/2001.

\bibitem[Chambert-{L}oir,
  2010]{chambert-loir_equidistributionsytemesdynamiques}
Chambert-{L}oir, A. (2010).
\newblock Th\'eor\`emes d'\'equidistribution pour les syst\`emes dynamiques
  d'origine arithm\'etique.
\newblock In {\em Quelques aspects des syst\`emes dynamiques polynomiaux},
  volume~30, pages 97--189. SMF.

\bibitem[Chen, 2006]{chen_these}
Chen, H. (2006).
\newblock {\em Positivit\'e en g\'eom\'etrie alg\'ebrique et en g\'eom\'etrie
  d'Arakelov~: application \`a l'alg\'ebrisation et \`a l'\'etude asymptotique
  des polygones de Harder-Narasimhan}.
\newblock PhD thesis, \'Ecole polytechnique.

\bibitem[Diamond, 1980]{diamond_SL}
Diamond, J. (1980).
\newblock The {S}chneider-{L}ang theorem for functions with essential
  singularities.
\newblock {\em Proc. Amer. Math. Soc.}, 80(2):223--226.

\bibitem[Fresnel and van~der Put, 1981]{Fresnel-vanderPut}
Fresnel, J. and van~der Put, M. (1981).
\newblock {\em G\'eom\'etrie analytique rigide et applications}, volume~18 of
  {\em Progress in Mathematics}.
\newblock Birkh\"auser Boston, Mass.

\bibitem[Gasbarri, 2010]{gasbarri_SL}
Gasbarri, C. (2010).
\newblock Analytic subvarieties with many rational points.
\newblock {\em Math. Ann.}, 346(1):199--243.

\bibitem[Gaudron, 2008]{gaudron_fibresadeliques}
Gaudron, {\'E}. (2008).
\newblock Pentes des fibr\'es vectoriels ad\'eliques sur un corps global.
\newblock {\em Rend. Semin. Mat. Univ. Padova}, 119:21--95.

\bibitem[Grauert and Remmert, 2004]{grauert_remmert_steinspaces}
Grauert, H. and Remmert, R. (2004).
\newblock {\em Theory of {S}tein spaces}.
\newblock Classics in Mathematics. Springer-Verlag, Berlin.
\newblock Translated from the German by Alan Huckleberry, Reprint of the 1979
  translation.

\bibitem[Herblot, 2011]{herblot_these}
Herblot, M. (2011).
\newblock {\em Sur le théorème de {S}chneider-{L}ang}.
\newblock PhD thesis, Université de Rennes 1.

\bibitem[Herblot, 2012]{herblot_SLproduit}
Herblot, M. (2012).
\newblock A {S}chneider-{L}ang theorem on a product of open subsets of the
  affine line.
\newblock In preparation.

\bibitem[Huayi, 2009]{chen_heights}
Huayi, C. (2009).
\newblock Explicit uniform estimation of rational points. {I.} estimation of
  heights.
\newblock To appear in \emph{Journal für die reine und angewandte Mathematik}.

\bibitem[Kiehl, 1967]{kiehl_nichtarchimedischen}
Kiehl, R. (1967).
\newblock Der {E}ndlichkeitssatz f\"ur eigentliche {A}bbildungen in der
  nichtarchimedischen {F}unktionentheorie.
\newblock {\em Invent. Math.}, 2:191--214.

\bibitem[Lang, 1994]{lang_algebraic-number-theory}
Lang, S. (1994).
\newblock {\em Algebraic number theory}, volume 110 of {\em Graduate Texts in
  Mathematics}.
\newblock Springer-Verlag, New York, second edition.

\bibitem[Liu, 2002]{liu_algebraicgeometry}
Liu, Q. (2002).
\newblock {\em Algebraic geometry and arithmetic curves}, volume~6 of {\em
  Oxford Graduate Texts in Mathematics}.
\newblock Oxford University Press, Oxford.
\newblock Translated from the French by Reinie Ern{\'e}, Oxford Science
  Publications.

\bibitem[Miyaoka, 1987]{miyaoka_foliation}
Miyaoka, Y. (1987).
\newblock Deformations of a morphism along a foliation and applications.
\newblock In {\em Algebraic geometry, {B}owdoin, 1985 ({B}runswick, {M}aine,
  1985)}, volume~46 of {\em Proc. Sympos. Pure Math.}, pages 245--268. Amer.
  Math. Soc., Providence, RI.

\bibitem[Tenenbaum, 1995]{tenenbaum}
Tenenbaum, G. (1995).
\newblock {\em Introduction to analytic and probabilistic number theory},
  volume~46 of {\em Cambridge Studies in Advanced Mathematics}.
\newblock Cambridge University Press, Cambridge.
\newblock Translated from the second French edition (1995) by C. B. Thomas.

\bibitem[Viada, 2001]{viada_these}
Viada, E. (2001).
\newblock {\em Elliptic {I}sogenies and {S}lopes}.
\newblock PhD thesis, ETH Z\"urich.

\bibitem[Viada, 2005]{viada_slopes}
Viada, E. (2005).
\newblock Slopes and abelian subvariety theorem.
\newblock {\em J. Number Theory}, 112(1):67--115.

\bibitem[Wakabayashi, 1987]{wakabayashi_sl}
Wakabayashi, I. (1987).
\newblock Algebraic values of meromorphic functions on {R}iemann surfaces.
\newblock {\em J. Number Theory}, 25(2):220--229.

\end{thebibliography}

\end{document}